\newcommand{\arXiv}[1]{\href{http://arxiv.org/abs/#1}{\tt arXiv:\nolinkurl{#1}}}
\newcommand{\googlebooks}[1]{(preview at \href{http://books.google.com/books?id=#1}{google books})}
\theoremstyle{plain}
\newtheorem{thm}{Theorem}[section]
\newtheorem{cor}[thm]{Corollary}
\newtheorem{lem}[thm]{Lemma}
\newtheorem{prop}[thm]{Proposition}
\theoremstyle{definition}
\newtheorem{defn}[thm]{Definition}
\newtheorem{nota}[thm]{Notation}
\newtheorem{note}[thm]{Note}
\newtheorem{ex}[thm]{Example}
\newtheorem{rem}[thm]{Remark}
\newtheorem{rems}[thm]{Remarks} 
\newtheorem{alg}[thm]{Algorithm}
\DeclareMathOperator{\coker}{coker}
\DeclareMathOperator{\capind}{capind}
\DeclareMathOperator{\caps}{caps}
\DeclareMathOperator{\cupind}{cupind}
\DeclareMathOperator{\cups}{cups}
\DeclareMathOperator{\id}{id}
\DeclareMathOperator{\im}{im}
\DeclareMathOperator{\ind}{ind}
\DeclareMathOperator{\Mor}{Mor}
\DeclareMathOperator{\Ob}{Ob}
\DeclareMathOperator{\rel}{rel}
\DeclareMathOperator{\Tot}{Tot}
\DeclareMathOperator{\ts}{ts}
\newcommand{\D}{\displaystyle}
\newcommand{\comment}[1]{}
\newcommand{\hs}{\hspace{.07in}}
\newcommand{\hsp}[1]{\hs\text{#1}\hs}
\newcommand{\be}{\begin{enumerate}}
\newcommand{\ee}{\end{enumerate}}
\newcommand{\itm}[1]{\item[\underline{\ensuremath{#1}:}]} 
\newcommand{\itt}[1]{\item[\underline{\text{#1}:}]} 
\newcommand{\N}{\mathbb{N}}
\newcommand{\C}{\mathbb{C}}
\newcommand{\Z}{\mathbb{Z}} 
\newcommand{\W}{\mathcal{W}} 
\newcommand{\T}{\mathcal{T}} 
\newcommand{\set}[2]{\left\{#1 \big| #2\right\}}
\newcommand{\thh}{^{\text{th}}}
\newcommand{\A}{\mathcal{A}}
\newcommand{\B}{\mathcal{B}}
\newcommand{\Atl}{{\sf{Atl}}}
\newcommand{\AAA}{{\sf{A}}}
\newcommand{\BB}{{\sf{B}}}
\newcommand{\CC}{{\sf{C}}}
\newcommand{\TT}{{\sf{T}}}
\newcommand{\PO}{{\sf{PO}}}
\newcommand{\QQ}{{\sf{Q}}}
\newcommand{\cAtl}{{\sf{cAtl}}} 
\newcommand{\sAtl}{{\sf{sAtl}}}
\newcommand{\ssAtl}{{\sf{ssAtl}}} 
\newcommand{\aD}{{\sf{a}\Delta}}
\newcommand{\tlD}{{\sf{tl}\Delta}}
\newcommand{\ccD}{{\sf{c}\Delta}}
\newcommand{\sD}{{\sf{s}\Delta}}
\newcommand{\ssD}{{\sf{ss}\Delta}}
\newcommand{\TL}{{\sf{TL}}}
\newcommand{\sTL}{{\sf{sTL}}}
\newcommand{\ssTL}{{\sf{ssTL}}}
\newcommand{\aTL}{{\sf{aTL}}}
\newcommand{\Cat}{\sf{Cat}}
\newcommand{\Set}{\sf{Set}}
\newcommand{\op}{^{\sf{op}}}   
\begin{document}
 \tikzstyle{shaded}=[fill=red!10!blue!20!gray!30!white]
\tikzstyle{shaded line}=[double=red!10!blue!20!gray!30!white, double distance=1.5mm, draw=black]
\tikzstyle{unshaded}=[fill=white]
\tikzstyle{unshaded line}=[double=white, double distance=1.5mm, draw=black]
\tikzstyle{Tbox}=[circle, draw, thick, fill=white, opaque,]
\tikzstyle{empty box}=[circle, draw, thick, fill=white, opaque, inner sep=2mm]
\tikzstyle{background rectangle}= [fill=red!10!blue!20!gray!40!white,rounded corners=2mm] 
\tikzstyle{on}=[very thick, red!50!blue!50!black]
\tikzstyle{off}=[gray]

%These are for resizing a family of drawings at the same time
\tikzstyle{traces}=[scale=.2, inner sep=1mm]
\tikzstyle{quadratic}=[scale=.4, inner sep=1mm, baseline]
\tikzstyle{annular}=[scale=.7, inner sep=1mm, baseline]
\tikzstyle{rectangular}=[scale=.7, inner sep=1mm, baseline]
\tikzstyle{make triple edge size}= [scale=.4, inner sep=1mm,baseline] 
\tikzstyle{icosahedron network}=[scale=.3, inner sep=1mm, baseline]
\tikzstyle{ATLsix}=[scale=.25, baseline]
\tikzstyle{TL12}=[scale=.15,baseline]
\tikzstyle{PAdefn}=[scale=.7,baseline]
\tikzstyle{TLEG}=[scale=.5,baseline]

\title[A Cyclic Approach to the  Annular Temperley-Lieb Category]
 {A Cyclic Approach to the Annular Temperley-Lieb Category}

\author{ David Penneys}

\address{Department of Mathematics, University of California, Berkeley, 94720}

\email{dpenneys@math.berkeley.edu}

\keywords{annular tangle, planar algebra, Temperley-Lieb, cyclic category}

\date{\today}

%%% ----------------------------------------------------------------------

\begin{abstract}
In \cite{MR1865703}, Jones found two copies of the cyclic category $\ccD$ in the annular Temperley-Lieb category $\Atl$. We give an abstract presentation of $\Atl$ to discuss how these two copies of $\ccD$ generate $\Atl$ together with the coupling constants and the coupling relations. We then discuss modules over the annular category and homologies of such modules, the latter of which arises from the cyclic viewpoint.
\end{abstract}

%%% ----------------------------------------------------------------------
\maketitle
\tableofcontents
%%% ----------------------------------------------------------------------

%%%%%%%%%%%%%%%%%%%%%%%%%%%%%%%%%%%%%%%%%%%%%%%%%%%
%%%%%%%%%%%%%%%%%%%%%%%%%%%%%%%%%%%%%%%%%%%%%%%%%%%
%%%%%%%%%%%%%%%%%%%%%%%%%%%%%%%%%%%%%%%%%%%%%%%%%%%
\section{Introduction}
The Temperley-Lieb algebras have been studied extensively beginning with Temperley and Lieb's first paper in statistical mechanics regarding hydrogen bonds in ice-type lattices \cite{MR0498284}.  Since, these algebras have been instrumental in many areas of mathematics, including subfactors \cite{MR696688} and knot theory \cite{MR0766964}. The well known diagrammatic representation of these algebras was introduced by Kauffman in \cite{MR899057} in his skein theoretic definition of the Jones polynomial. From these diagrams, we get the Temperley-Lieb category whose objects are $n$ points on a line, morphisms are diagrams with non-intersecting strings, and composition is stacking tangles vertically (we read bottom to top). 

Historically, the (affine/annular) Temperley-Lieb algebras have been presented as quotients of the (affine) Hecke algegras \cite{MR1309131}. Graham and Lehrer define cellular structures for these algebras in \cite{MR1376244}, and they give the representation theory for affine Temperley-Lieb in \cite{MR1659204}. Jones' definition of the annular Temperley-Lieb category  (see \cite{math/9909027}, \cite{MR1929335}), which we will denote $\Atl$, differs slightly Graham and Lehrer's. First, $\Atl$-tangles have a checkerboard shading, so each disk has an even number of boundary points. Second, the rotation is periodic in $\Atl$, similar to the rotation in Connes' cyclic category $\ccD$, studied by Connes \cite{MR0777584}, \cite{MR1303779}, Loday and Quillen \cite{MR0695381}, \cite{MR1217970}, and Tsygan \cite{MR0695483}. Jones found a connection between $\Atl$ and $\ccD$ in \cite{MR1865703}, and raised the question we now address: how does $\Atl$ arise from the interaction of two copies of the cyclic category?

In answering this question, we see $\Atl$ evolve from simple categories. The opposite of the simplicial category $\sD\op$ (see \ref{simplicial}) has a well known pictorial representation much like the Temperley-Lieb category: objects are $2n+2$ points on a line, morphisms are rectangular planar tangles with only shaded caps and unshaded cups, and composition is stacking. In fact, these diagrams closely resemble the string diagrams arising from an adjoint functor pair.

\begin{figure}[!ht]
\begin{tikzpicture}[rectangular]
	\clip (0,0) rectangle (4,4);
	
%	\node at (0,0)  [empty box] (T) {};
%THROUGH STRINGS
%	\draw[very thick] (2,0) -- (2,4);
%	\draw[very thick] (2.5,0) -- (2.5,4);
	\filldraw[shaded] (2,-1) -- (2,5) -- (2.5, 5)  -- (2.5,-1); 
%	\draw[very thick] (3,0) -- (3,4);
%	\draw[very thick] (3.5,0) -- (3.5,4);
	\filldraw[shaded] (3,-1) -- (3,5) -- (3.5, 5)  -- (3.5,-1); 
	\filldraw[shaded] (.5,0) arc (180:0:5mm);	

	\draw[ultra thick] (0,0) rectangle (4,4);
\end{tikzpicture}\hs,\hs
\begin{tikzpicture}[rectangular]
	\clip (0,0) rectangle (4,4);
	
%	\node at (0,0)  [empty box] (T) {};
%THROUGH STRINGS
%	\draw[very thick] (.5,0) -- (.5,4);
%	\draw[very thick] (1,0) -- (1,4);
	\filldraw[shaded] (.5,-1) -- (.5,5) -- (1, 5)  -- (1,-1); 
%	\draw[very thick] (3,0) -- (3,4);
%	\draw[very thick] (3.5,0) -- (3.5,4);
	\filldraw[shaded] (3,-1) -- (3,5) -- (3.5, 5)  -- (3.5,-1); 
	\filldraw[shaded] (1.5,0) arc (180:0:5mm);	

	\draw[ultra thick] (0,0) rectangle (4,4);
\end{tikzpicture}\hs,\hs
\begin{tikzpicture}[rectangular]
	\clip (0,0) rectangle (4,4);
	
%	\node at (0,0)  [empty box] (T) {};
%THROUGH STRINGS
%	\draw[very thick] (.5,0) -- (.5,4);
%	\draw[very thick] (1,0) -- (1,4);
	\filldraw[shaded] (.5,-1) -- (.5,5) -- (1, 5)  -- (1,-1); 
%	\draw[very thick] (1.5,0) -- (1.5,4);
%	\draw[very thick] (2,0) -- (2,4);
	\filldraw[shaded] (1.5,-1) -- (1.5,5) -- (2, 5)  -- (2,-1); 
	\filldraw[shaded] (2.5,0) arc (180:0:5mm);	

	\draw[ultra thick] (0,0) rectangle (4,4);
\end{tikzpicture}
\caption{Face maps $d_0,d_1,d_2\colon[2]\to[1]$}
\end{figure}

\begin{figure}[!ht]
\begin{tikzpicture}[rectangular]
	\clip (0,0) rectangle (4,4);
	
%THROUGH STRINGS
%	\draw[very thick] (.4,0) -- (.4,4);
%	\draw[very thick] (2,0) -- (2,4);
	\filldraw[shaded] (.4,-1) -- (.4,5) -- (2, 5)  -- (2,-1); 
%	\draw[very thick] (2.4,0) -- (2.4,4);
%	\draw[very thick] (2.8,0) -- (2.8,4);
	\filldraw[shaded] (2.4,-1) -- (2.4,5) -- (2.8, 5)  -- (2.8,-1);
%	\draw[very thick] (3.2,0) -- (3.2,4);
%	\draw[very thick] (3.6,0) -- (3.6,4);
	\filldraw[shaded] (3.2,-1) -- (3.2,5) -- (3.6, 5)  -- (3.6,-1);

	\filldraw[unshaded] (.7,4) arc (-180:0:5mm);			
	\draw[ultra thick] (0,0) rectangle (4,4);
\end{tikzpicture}\hs,\hs
\begin{tikzpicture}[rectangular]
	\clip (0,0) rectangle (4,4);
	
%THROUGH STRINGS
%	\draw[very thick] (.4,0) -- (.4,4);
%	\draw[very thick] (.8,0) -- (.8,4);
	\filldraw[shaded] (.4,-1) -- (.4,5) -- (.8, 5)  -- (.8,-1);
%	\draw[very thick] (1.2,0) -- (1.2,4);
%	\draw[very thick] (2.8,0) -- (2.8,4);
	\filldraw[shaded] (1.2,-1) -- (1.2,5) -- (2.8, 5)  -- (2.8,-1); 
%	\draw[very thick] (3.2,0) -- (3.2,4);
%	\draw[very thick] (3.6,0) -- (3.6,4);
	\filldraw[shaded] (3.2,-1) -- (3.2,5) -- (3.6, 5)  -- (3.6,-1);

	\filldraw[unshaded] (1.5,4) arc (-180:0:5mm);			
	\draw[ultra thick] (0,0) rectangle (4,4);
\end{tikzpicture}\hs,\hs
\begin{tikzpicture}[rectangular]
	\clip (0,0) rectangle (4,4);
	
%THROUGH STRINGS
%	\draw[very thick] (.4,0) -- (.4,4);
%	\draw[very thick] (.8,0) -- (.8,4);
	\filldraw[shaded] (.4,-1) -- (.4,5) -- (.8, 5)  -- (.8,-1);
%	\draw[very thick] (1.2,0) -- (1.2,4);
%	\draw[very thick] (1.6,0) -- (1.6,4);
	\filldraw[shaded] (1.2,-1) -- (1.2,5) -- (1.6, 5)  -- (1.6,-1);
%	\draw[very thick] (2,0) -- (2,4);
%	\draw[very thick] (3.6,0) -- (3.6,4);
	\filldraw[shaded] (2,-1) -- (2,5) -- (3.6, 5)  -- (3.6,-1);

	\filldraw[unshaded] (2.3,4) arc (-180:0:5mm);			
	\draw[ultra thick] (0,0) rectangle (4,4);
\end{tikzpicture}
\caption{Degeneracies $s_0,s_1,s_2\colon[2]\to[3]$}
\end{figure}

An asymmetry is present in the above tangles: all shaded regions can be ``capped" by applying a face map, but not every unshaded region can be ``cupped" by applying a degeneracy. This asymmetry can be corrected by closing the rectangular tangles into annuli, still enforcing the same shading requirements. Jones showed the resulting category is isomorphic to $\ccD\op$ in \cite{MR1865703}. Of course the category with the reverse shading is also isomorphic to $\ccD$ (and $\ccD\op$), and these two subcategories generate $\Atl$.

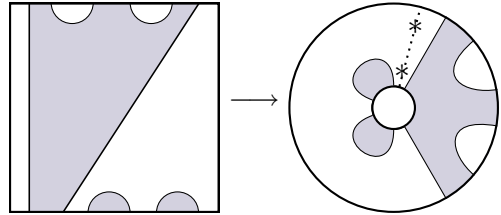
\begin{figure}[!ht]
$$
\begin{tikzpicture}[rectangular]
	\clip (-2,-2) rectangle (2,2);
	
%	\node at (0,0)  [empty box] (T) {};
%THROUGH STRINGS
	\draw[very thick] (-1.6,-2) -- (-1.6,2);
	\draw[very thick] (-1,-2) -- (1.6,2);
	\fill[shaded] (-1.6,-2) -- (-1.6,2) -- (1.6, 2)  -- (-1,-2); 
	\filldraw[shaded] (-.5,-2) arc (180:0:4mm);	
	\filldraw[shaded] (.8,-2) arc (180:0:4mm);	
	\filldraw[unshaded] (-1.2,2) arc (-180:0:4mm);	
	\filldraw[unshaded] (.3,2) arc (-180:0:4mm);
%	\filldraw[shaded] (-.4,2) arc (-180:0:4mm);		
	\draw[ultra thick] (-2,-2) rectangle (2,2);
\end{tikzpicture}
\longrightarrow
\begin{tikzpicture}[annular]
	\clip (0,0) circle (2cm);
	\node at (0,0)  [empty box] (T) {};
	\node at (75:1.60cm) {$*$};
% THROUGH STRINGS
	\filldraw[shaded] (-60:4cm)--(0,0)--(60:4cm);	
% CAPS:               
	\filldraw[shaded] (T.90) .. controls ++(90:11mm) and ++           (160:11mm) .. (T.160);
	\filldraw[shaded] (T.270) .. controls ++(270:11mm) and ++           (200:11mm) .. (T.200);
% CUPS
	\filldraw[unshaded] (-40:3cm)--(-40:2cm) .. controls ++(140:6mm) and ++(170:11mm) .. (-10:2cm) -- (-10:3cm);
	\filldraw[unshaded] (10:3cm)--(10:2cm) .. controls ++(190:6mm) and ++(220:11mm) .. (40:2cm) -- (40:3cm);
%	\filldraw[shaded] (-20:3cm)--(-20:2cm) .. controls ++(160:7mm) and ++(200:7mm) ..  (20:2cm) -- (20:3cm);
	\draw[thick,dotted] (75:0cm) -- (75:2cm);
	\node at (0,0)  [empty box] (T) {};
	\node at (T.68) [above] {$*$};
	\draw[ultra thick] (0,0) circle (2cm);	
\end{tikzpicture}
$$
\caption{Closing up rectangular tangles into annuli}
\end{figure}

\subsection{Outline}
In Section \ref{Atl}, we will define $\Atl$ and offer candidates for generators and relations. We will then prove some uniqueness results which will be crucial to our approach. In Section \ref{aD}, we will take these candidates and define an abstract category $\aD$, the annular category, via generators and relations. We then prove existence of a standard form for words. In Section \ref{isocat}, we prove Theorem \ref{iso}, which says there is an isomorphism of involutive categories $\Atl\cong\aD$ (the isomorphism preserves an involution).

After we have our description of $\Atl$ in terms of abstract generators and relations, we recover the result of Jones in \cite{MR1865703} in \ref{cyclic}, i.e. two isomorphisms from $\ccD\op$ to subcategories $\cAtl^\pm$ of $\Atl$. After a note on augmentation of the cyclic category in \ref{augment}, we prove the main result of the paper, Theorem \ref{pushoutAtl}, which shows $\Atl$ is a quotient of the pushout of augmented copies of $\ccD$ and $\ccD\op$ over a groupoid $\TT$ of finite cyclic groups:
\[
\xymatrix @-1pc{
\TT\ar[rr]\ar[dd] && \widetilde{\ccD\op}\ar[dd]\\ \\
\widetilde{\ccD}\ar[rr] && {\sf{PO}}\ar[dr]\\
&&& \Atl.
}
\]

In Section \ref{annularobjects}, we define the notion of an annular object in a category $\CC$. As $\ccD\op$ lives inside $\aD$ (in two ways), we will have notions of Hochschild and cyclic homology of annular objects in abelian categories. We define these notions and give some easy results in \ref{homology}.

\comment{
Finally, we make the previous subsection of lore rigorous in the Appendix. In \ref{temperleylieb}, we discuss the involutive category $\TL$ We find an isomorphic involutive subcategory $\aTL$ of $\Atl$, and we find the corresponding involutive subcategory $\tlD$ of $\aD$. In \ref{simplicial}, we find isomorphic copies of $\sD\op$ and $\ssD\op$ in $\Atl$ and $\TL$, which we denote $\sAtl$ and $\ssAtl$, respectively $\sTL$ and $\ssTL$.
}

\subsection{Acknowledgements}
The author would like to acknowledge and thank Vaughan Jones for his guidance and advice, Vijay Kodiyalam and V. S. Sunder for discussing the problem at length and for their hospitality at IMSc, Ved Gupta for proofreading and correcting an error in the first draft, and Emily Peters for her support and her help on drawing planar tangles (in fact, all tangles shown are adapted from \cite{0902.1294}). The author was partially supported by NSF grant DMS 0401734.

%%%%%%%%%%%%%%%%%%%%%%%%%%%%%%%%%%%%%%%%%%%%%%%%%%%
%%%%%%%%%%%%%%%%%%%%%%%%%%%%%%%%%%%%%%%%%%%%%%%%%%%
%%%%%%%%%%%%%%%%%%%%%%%%%%%%%%%%%%%%%%%%%%%%%%%%%%%
\section{The Category $\Atl$}\label{Atl}

\begin{nota}
All categories will be denoted by capital letters in the following sans-serif font: ${\sf{ABC}}$... The categories we discuss will be small, and we will write $X\in{\sf{A}}$ to denote that $X\in\Ob({\sf{A}})$, the set of objects of $\AAA$. We will write ${\sf{A}}(X,Y)$ to denote the set of morphisms $\varphi\colon X\to Y$ where $X,Y\in{\sf{A}}$, and we will write $\Mor({\sf{A}})$ to denote the collection of all morphisms in ${\sf{A}}$. In the sequel, objects of our categories will be the symbols $[n]$ for $n\in\Z_{\geq 0}\cup\{0\pm,\pm\}$. For simplicity and aesthetics, we will write ${\sf{A}}(m,n)$ instead of ${\sf{A}}([m],[n])$.
\end{nota}

\begin{defn}
A category $\AAA$ is called involutive if for all $X,Y\in\AAA$, there is a map $*\colon \AAA(X,Y)\to \AAA(Y,X)$ called the involution such that
\be
\item[(1)] $\id_X^*=\id_X$ for all $X\in\AAA$,
\item[(2)] $(T^*)^*=T$ for all $T\in \AAA(X,Y)$, and
\item[(3)] for all $X,Y,Z\in\AAA$ and all $T\in\AAA(X,Y)$ and $S\in \AAA(Y,Z)$, $(S\circ T)^*=T^*\circ S^*$.
\ee
In other words, there is a contravariant functor $*\colon \AAA\to \AAA$ of period two which fixes all objects. 
\end{defn}

\begin{defn} Suppose $\AAA$ and $\BB$ are categories and $F\colon \AAA\to \BB$ is a functor.
\be
\item[(1)] $F$ is called an isomorphism of categories if there is a functor $G\colon \BB\to \AAA$ such that $F\circ G=\id_{\BB}$ and $G\circ F=\id_{\AAA}$, the identity functors. In this case, we say categories $\AAA$ and $\BB$ are isomorphic, denoted $\AAA\cong\BB$.
\item[(2)] If $\AAA$ and $\BB$ are involutive, we say $F$ is involutive if it preserves the involution, i.e. $F(\varphi^*)=\varphi^*$ for all $\varphi\in\AAA(X,Y)$ for all $X,Y\in\AAA$.
\item[(3)] An isomorphism of involutive categories is an involutive isomorphism of said categories. 
\ee 
\end{defn}
\begin{rem}
It is clear that if $\AAA$ is involutive, then $\AAA\cong\AAA\op$.
\end{rem}
%%%%%%%%%%%%%%%%%%%%%%%%%%%%%%%%%%%%%%%%%%%%%%%%%%%
\subsection{Annular Tangles}\label{annulartangles}
We provide a definition of an annular $(m,n)$-tangle which is a fusion of the ideas in \cite{math/9909027} and \cite{MR2047470}.

\begin{defn}
An annular $(m,n)$-pretangle for $m,n\in\Z_{\geq 0}$ consists of the following data:
\item[(1)] The closed unit disk $D$ in $\C$,
\item[(2)] The skeleton of $T$, denoted $S(T)$, consisting of:
\be
\item[(a)] the boundary of $D$, denoted $D_0(T)$,
\item[(b)] the closed disk $D_1$  of radius $1/4$ in $\C$, whose boundary is denoted $D_1(T)$,
\item[(c)]  $2m$, respectively $2n$, distinct marked points on $D_1(T)$, respectively $D_0(T)$, called the boundary points of $D_i(T)$ for $i=0,1$. Usually we will call the boundary points of $D_0(T)$ external boundary points of $T$ and the boundary points of $D_1(T)$ internal boundary points.
\item[(d)] inside $D$, but outside $D_1$, there is a finite set of disjointly smoothly embedded curves called strings which are either closed curves, called loops, or whose boundaries are marked points of the $D_i(T)$'s and the strings meet each $D_i(T)$ transversally, $i=0,1$. Each marked point on $D_i(T)$, $i=0,1$ meets exactly one string. 
\ee
\item[(3)] The connected components of $D\setminus S(T)$ are called the regions of $T$ and are either shaded or unshaded so that regions whose closures meet have different shadings.
\itt{Definition}  An interval of $D_i(T)$, $i=0,1$, is a connected arc on $D_i(T)$ between two boundary points of $D_i(T)$. A simple interval of $D_i(T)$, $i=0,1$, is an interval of $D_i(T)$ in $T$ which touches only two (adjacent) boundary points. 
\item[(4)] For each $D_i(T)$, $i=0,1$, there is a distinguished simple interval of $D_i(T)$ denoted $*_i(T)$ whose interior meets an unshaded region.
\end{defn}

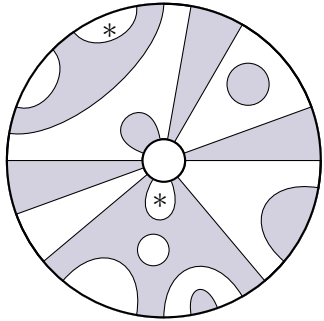
\begin{figure}[!ht]
\begin{tikzpicture}[PAdefn]
	\clip (0,0) circle (3cm);
	\draw[ultra thick] (0,0) circle (3cm);	
	\node at (0,0)  [empty box] (T) {};
%THROUGH STRINGS
	\filldraw[shaded] (0:4cm)--(0,0)--(20:4cm);	
	\filldraw[shaded] (60:4cm)--(0,0)--(80:4cm);
	\filldraw[shaded] (180:4cm)--(0,0)--(200:4cm);
	\filldraw[shaded] (220:5cm)--(0,0)--(310:5cm);		
%CAPS AND CUPS
	\filldraw[shaded] (T.100) .. controls ++(100:11mm) and ++           (160:11mm) .. (T.160);
	\filldraw[unshaded] (T.240) .. controls ++(240:11mm) and ++           (290:11mm) .. (T.290);
	\filldraw[shaded] (90:4cm)--(90:3cm) .. controls ++(270:11mm) and ++(350:11mm) .. (170:3cm) -- (170:4cm);
	\filldraw[unshaded] (100:4cm)--(100:3cm)  .. controls ++(280:6mm) and ++(305:6mm) ..         (125:3cm) -- (125:4cm);
	\filldraw[unshaded] (135:4cm)--(135:3cm)  .. controls ++(315:6mm) and ++(340:6mm) ..         (160:3cm) -- (160:4cm);
	\filldraw[unshaded] (270:4cm)--(270:3cm)  .. controls ++(90:11mm) and ++(120:11mm) ..         (300:3cm) -- (300:4cm);
	\filldraw[shaded] (280:4cm)--(280:3cm)  .. controls ++(100:6mm) and ++(110:6mm) .. (290:3cm) -- (290:4cm);
	\filldraw[unshaded] (230:4cm)--(230:3cm)  .. controls ++(50:11mm) and ++(80:11mm) ..         (260:3cm) -- (260:4cm);
	\filldraw[shaded] (320:4cm)--(320:3cm)  .. controls ++(140:11mm) and ++(170:11mm) ..         (350:3cm) -- (350:4cm);	
	\filldraw[shaded] (1.6,1.45) circle (.4cm);
	\filldraw[unshaded] (-.2,-1.7) circle (.3cm);
%	\filldraw[shaded] (T.180) .. controls ++(180:11mm) and ++(120:11mm) .. (T.120);		
	\draw[ultra thick] (0,0) circle (3cm);	
	\node at (0,0)  [empty box] (T) {};
	\node at (T.260) [below] {$*$};
	\node at (110:3cm)[below] {$*$};
\end{tikzpicture}
\caption{An example of an annular tangle}
\end{figure}

\begin{rems}
\item[(1)]
If $m=0$, there are two kinds of annular $(0,n)$-pretangles depending on whether the region meeting $D_1(T)$ is unshaded or shaded. If the region meeting $D_1(T)$ is unshaded, we call $T$ an annular $(0+,n)$-pretangle, and if the region is shaded, we call $T$ an annular $(0-,n)$-pretangle. Likewise, when $n=0$, there are two kinds of annular $(m,0)$-pretangles. If the region meeting $D_0(T)$ is unshaded, we call $T$ an annular $(m,0+)$-pretangle, and if the region is shaded, we call $T$ an annular $(m,0-)$-pretangle. Additionally, we have annular $(0\pm,0\pm)$-pretangles and annular $(0\pm,0\mp)$-pretangles.
\item[(2)]
Loops may be shaded or unshaded.
\item[(3)] Starting at $*_i(T)$ on $D_i(T)$, we order the marked points of $D_i(T)$ clockwise. 
\end{rems}

\begin{defn}
An annular $(m,n)$-tangle is an orientation-preserving diffeomorphism class of an annular $(m,n)$-pretangle for $m,n\in\N\cup\{0\pm\}$. The diffeomorphisms preserve (but do not necessarily fix!) $D_0$ and $D_1$.
\end{defn}

\begin{defn}
Given an annular $(m,n)$-tangle $T$, and an annular $(l,m)$-tangle $S$, we define the annular $(l,m)$-tangle $T\circ S$ by isotoping $S$ so that $D_0(S)$, the marked points of $D_0(S)$, and $*_0(S)$, coincide with $D_1(T)$, the marked points of $D_1(T)$, and $*_1(T)$ respectively. The strings may then be joined at $D_1(T)$ and smoothed, and $D_1(T)$ is removed to obtain $T\circ S$ whose diffeomorphism class only depends on those of $T$ and $S$. 
\end{defn}

\begin{rem}
Note that in the case where $m=0-$ in the above defintion, there are no $*_0(S)$ and $*_1(T)$, but this information is not necessary to define the composite $T\circ S$.
\end{rem}

\begin{figure}[!ht]
$$
\begin{tikzpicture}[annular]
%CIRCLES AND OUTER STAR
	\clip (0,0) circle (2cm);
	\draw[ultra thick] (0,0) circle (2cm);	
	\node at (0,0)  [empty box] (T) {};
	\node at (180:1.80cm) [right] {$*$};

	\filldraw[shaded] (90:4cm)--(0,0)--(270:4cm) -- (0:4cm);	
	\filldraw[unshaded] (T.-40) .. controls ++(-40:11mm) and ++           (40:11mm) .. (T.40);
	\filldraw[shaded] (T.140) .. controls ++(140:11mm) and ++           (220:11mm) .. (T.220);

	\draw[ultra thick] (0,0) circle (2cm);
	\node at (0,0)  [empty box] (T) {};
	\node at (T.120) [above] {$*$};		
\end{tikzpicture}
\circ
\begin{tikzpicture}[annular]
%CIRCLES AND OUTER STAR
	\clip (0,0) circle (2cm);
	\draw[ultra thick] (0,0) circle (2cm);	
	\node at (0,0)  [empty box] (T) {};
	\node at (120:1.80cm) [below] {$*$};

	\filldraw[shaded] (90:4cm)--(0,0)--(270:4cm) -- (0:4cm);	
	\filldraw[unshaded] (-20:2cm) .. controls ++(160:7mm) and ++(200:7mm) .. (20:2cm) -- (20:3cm);
	\filldraw[shaded] (160:2cm) .. controls ++(-20:7mm) and ++(20:7mm) ..  (200:2cm) -- (200:3cm);

	\draw[ultra thick] (0,0) circle (2cm);
	\node at (0,0)  [empty box] (T) {};
	\node at (T.180) [left] {$*$};		
\end{tikzpicture}
=
\begin{tikzpicture}[annular]
%CIRCLES AND OUTER STAR
	\clip (0,0) circle (2cm);
	\draw[ultra thick] (0,0) circle (2cm);	
	\node at (0,0)  [empty box] (T) {};
	\node at (180:1.40cm) [left] {$*$};

	\filldraw[shaded] (90:4cm)--(0,0)--(270:4cm) -- (0:4cm);	
	\filldraw[unshaded] (0:1cm) circle (.4cm);
	\filldraw[shaded] (180:1cm) circle (.4cm);
	\draw[ultra thick] (0,0) circle (2cm);
	\node at (0,0)  [empty box] (T) {};
	\node at (T.135) [above] {$*$};		
\end{tikzpicture}
$$
\caption{An example of composition of annular tangles}
\end{figure}
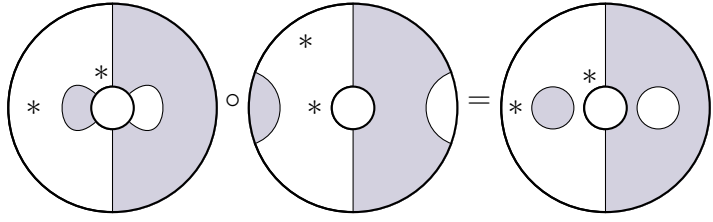

\begin{defn}
If $T$ is an annular $(m,n)$-tangle, we define $T^*$ to be the annular $(n,m)$-tangle obtained by reflecting $T$ about the circle of radius $3/4$, which switches $D_i(T)$ and $*_i(T)$, $i=0,1$. Clearly $(T^*)^*=T$ and $(T\circ S)^*=S^*\circ T^*$ for composable $S$ and $T$.
\end{defn}

\begin{figure}[!ht]
$$
\begin{tikzpicture}[annular]
	\clip (0,0) circle (2cm);
	\draw[ultra thick] (0,0) circle (2cm);	
	\node at (0,0)  [empty box] (T) {};
	\node at (-170:2cm) [right] {$*$};

	\filldraw[shaded] (0:2cm) .. controls ++(180:7mm) and ++(-135:7mm) .. (45:2cm) -- (45:3cm)--(90:3cm) -- (90:2cm) --(0,0) -- (-45:2cm) -- (-45:3cm);		
	\filldraw[shaded] (-90:2cm) .. controls ++(90:7mm) and ++(45:7mm) .. (-135:2cm) -- (-135:3cm);
	\filldraw[shaded] (-180:2cm) .. controls ++(0:7mm) and ++(-45:7mm) .. (135:2cm) -- (135:3cm);
	\filldraw[shaded] (T.180) .. controls ++(180:11mm) and ++(-120:11mm) .. (T.-120);

	\draw[ultra thick] (0,0) circle (2cm);	
	\node at (0,0)  [empty box] (T) {};
	\node at (T.160) [above] {$*$};
\end{tikzpicture}^*=
\begin{tikzpicture}[annular]
	\clip (0,0) circle (2cm);
	\draw[ultra thick] (0,0) circle (2cm);	
	\node at (0,0)  [empty box] (T) {};
	\node at (160:2cm) [right] {$*$};

	\filldraw[shaded] (0,0) -- (90:3cm) -- (0:4cm) -- (-45:3cm) -- (0,0) .. controls ++(-15:2cm) and ++(60:2cm) .. (0,0);	
	\filldraw[shaded] (180:2cm) .. controls ++(0:7mm) and ++(45:7mm) .. (-135:2cm) -- (-135:3cm);
	\filldraw[shaded] (T.-75) .. controls ++(-75:11mm) and ++(-135:11mm) .. (T.-135);
	\filldraw[shaded] (T.180) .. controls ++(180:11mm) and ++(120:11mm) .. (T.120);		

	\draw[ultra thick] (0,0) circle (2cm);	
	\node at (0,0)  [empty box] (T) {};
	\node at (T.-150) [left] {$*$};
\end{tikzpicture}
$$
\caption{An example of the adjoint of an annular tangle}
\end{figure}
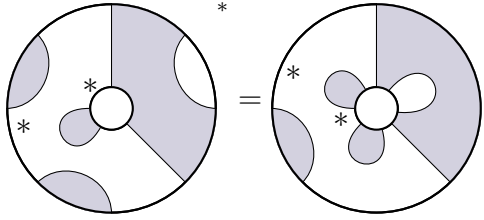

\begin{defn}
Let $T$ be an annular $(m,n)$-tangle.
\be
\itt{Caps} A cap of $T$ is a string that connects two internal boundary points. The set of caps of $T$ will be denoted $\caps(T)$. 
\itm{\partial \Lambda} If $\Lambda\in\caps(T)$, there is a unique interval of $D_1(T)$, denoted $\partial \Lambda$, such that $\Lambda\cup\partial \Lambda$ is a closed loop (which is not smooth at two points) which does not contain $D_1$ in its interior. Using $\partial \Lambda$, the cap $\Lambda$ inherits an orientation as $D_1(T)$ is oriented clockwise. Denote this orientation by an arrow on $\Lambda$. 
\itt{Index} We define the cap index of $\Lambda$, denoted $\ind(\Lambda)$, to be the number of the marked point to which the arrow points. The set of cap indices of $T$ forms an increasing sequence, which we denote $\capind(T)$.
\itm{B(\Lambda)} For $\Lambda\in\caps(T)$, we let $B(\Lambda)=\set{\Lambda'\in\caps(T)}{\partial \Lambda'\subseteq \partial \Lambda}$, and we say an element $\Lambda'\in B(\Lambda)$ is bounded by $\Lambda$ or that $\Lambda$ bounds $\Lambda'$.
\ee
\end{defn}

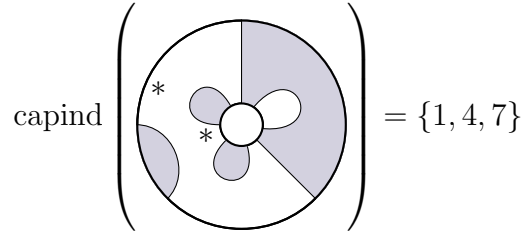
\begin{figure}[!ht]
$$
\capind\left(
\begin{tikzpicture}[annular]
	\clip (0,0) circle (2cm);
	\draw[ultra thick] (0,0) circle (2cm);	
	\node at (0,0)  [empty box] (T) {};
	\node at (160:2cm) [right] {$*$};

	\filldraw[shaded] (0,0) -- (90:3cm) -- (0:4cm) -- (-45:3cm) -- (0,0) .. controls ++(-15:2cm) and ++(60:2cm) .. (0,0);	
	\filldraw[shaded] (180:2cm) .. controls ++(0:7mm) and ++(45:7mm) .. (-135:2cm) -- (-135:3cm);
	\filldraw[shaded] (T.-75) .. controls ++(-75:11mm) and ++(-135:11mm) .. (T.-135);
	\filldraw[shaded] (T.180) .. controls ++(180:11mm) and ++(120:11mm) .. (T.120);		

	\draw[ultra thick] (0,0) circle (2cm);	
	\node at (0,0)  [empty box] (T) {};
	\node at (T.-150) [left] {$*$};
\end{tikzpicture}
\right) = \{1,4,7 \}
$$
\caption{An example of cap indices}
\end{figure}

\begin{defn}
Let $T$ be an annular $(m,n)$-tangle.
\be 
\itt{Cups} A cup $V$ of $T$ is a string that connects two external boundary points. The set of cups of $T$ will be denoted $\cups(T)$. 
\itm{\partial V} If $V\in\cups(T)$, there is a unique interval of $D_0(T)$, denoted $\partial V$, such that $V\cup\partial V$ is a closed loop (which is not smooth at two points) which does not contain $D_1$ in its interior. Using $\partial V$, the cup $V$ inherits an orientation as $D_0(T)$ is oriented clockwise. Denote this orientation by an arrow on $V$. 
\itt{Index} We define the cup index of $V$, denoted $\ind(V)$, to be the number of the marked point to which the arrow points. The set of cup indices of $T$ forms an increasing sequence, which we denote $\cupind(T)$.
\itm{B(V)} For $V\in\cups(T)$, we let $B(V)=\set{V'\in\cups(T)}{\partial V'\subseteq \partial V}$, and we say an element $V'\in B(V)$ is bounded by $V$ or that $V$ bounds $V'$.
\ee
\end{defn}

\begin{rem}
Note $\capind(T)=\cupind(T^*)$ for all annular tangles $T$.
\end{rem}

\begin{defn} Suppose $T$ is an annular $(m,n)$-tangle.
\be
\itm{\ts(T)}
A through string is a string of $T$ which connects an internal boundary point of $T$ to an external boundary point of $T$. The set of through strings is denoted $\ts(T)$. Note that $|\ts(T)|\in 2\Z_{\geq 0}$. We order $\ts(T)$ clockwise starting at $*_0(T)$, so each through string of $T$ has a number.
\itm{\ts_0(T)} Suppose $T$ has a through string. Using $*_0(T)$ as our reference, we go counterclockwise along $D_0(T)$ to the first through string, which is denoted $\ts_0(T)$. Note the number of $\ts_0(T)$ is $|\ts(T)|$.
\itm{\ts_1(T)} Suppose $T$ has a through string. Using $*_1(T)$ as our reference, we go counterclockwise along $D_1(T)$ to the first through string, which is denoted $\ts_1(T)$. We denote the number of $\ts_1(T)$ by $\#\ts_1(T)$.
\itm{\rel_*(T)} We define the relative star position of $T$, denoted $\rel_*(T)$, as follows:
\be
\item[(1)] Suppose $T$ has an odd number $k$ of non-contractible loops. Then there is a unique region $R$ which touches both a non-contractible loop and $D_1(T)$. If $R$ is unshaded, we define $\rel_*(T)$ to be the symbol $\pm(k)$, and if $R$ is shaded, we define $\rel_*(T)$ to be the symbol $\mp(k)$. This notation signifies the shading switches from unshaded to shaded, respectively shaded to unshaded, as we read $T$ from $D_1(T)$ to $D_0(T)$.
\item[(2)] Suppose $T$ has an even number $k$ of non-contractible loops. If $k=0$, then there is a unique region $R$ which touches both $D_0(T)$ and $D_1(T)$. If $k\geq 1$, then there is a unique region $R$ which touches both a non-contractible loop and $D_1(T)$. If $R$ is unshaded, we define $\rel_*(T)$ to be the symbol $+(k)$, and if $R$ is shaded, we define $\rel_*(T)$ to be the symbol $-(k)$. 
\item[(3)] Suppose $T$ has a through string. We define 
$$
\rel_*(T)=\left\lfloor \frac{\#\ts_1(T)}{2}\right\rfloor \mod\left( \frac{|\ts(T)|}{2}\right)\in \left\{ 0,1,\dots, \frac{|\ts(T)|}{2}-1\right\}.
$$
\ee 
\ee
\end{defn}

\begin{figure}[!ht]
$$
\rel_*\left(
\begin{tikzpicture}[PAdefn]
	\clip (0,0) circle (3cm);
	\draw[ultra thick] (0,0) circle (3cm);	
	\node at (0,0)  [empty box] (T) {};

%THROUGH STRINGS
	\filldraw[shaded] (0:4cm)--(0,0)--(20:4cm);	
	\filldraw[shaded] (60:4cm)--(0,0)--(80:4cm);
	\filldraw[shaded] (180:4cm)--(0,0)--(200:4cm);
	\filldraw[shaded] (220:5cm)--(0,0)--(310:5cm);		

%CAPS AND CUPS
	\filldraw[shaded] (T.100) .. controls ++(100:11mm) and ++           (160:11mm) .. (T.160);
	\filldraw[unshaded] (T.240) .. controls ++(240:11mm) and ++           (290:11mm) .. (T.290);
	\filldraw[shaded] (90:4cm)--(90:3cm) .. controls ++(270:11mm) and ++(350:11mm) .. (170:3cm) -- (170:4cm);
	\filldraw[unshaded] (100:4cm)--(100:3cm)  .. controls ++(280:6mm) and ++(305:6mm) ..         (125:3cm) -- (125:4cm);
	\filldraw[unshaded] (135:4cm)--(135:3cm)  .. controls ++(315:6mm) and ++(340:6mm) ..         (160:3cm) -- (160:4cm);
	\filldraw[unshaded] (270:4cm)--(270:3cm)  .. controls ++(90:11mm) and ++(120:11mm) ..         (300:3cm) -- (300:4cm);
	\filldraw[shaded] (280:4cm)--(280:3cm)  .. controls ++(100:6mm) and ++(110:6mm) .. (290:3cm) -- (290:4cm);
	\filldraw[unshaded] (230:4cm)--(230:3cm)  .. controls ++(50:11mm) and ++(80:11mm) ..         (260:3cm) -- (260:4cm);
	\filldraw[shaded] (320:4cm)--(320:3cm)  .. controls ++(140:11mm) and ++(170:11mm) ..         (350:3cm) -- (350:4cm);			
%	\filldraw[shaded] (T.180) .. controls ++(180:11mm) and ++(120:11mm) .. (T.120);		

	\draw[ultra thick] (0,0) circle (3cm);	
	\node at (0,0)  [empty box] (T) {};
	\node at (T.255) [below] {$*$};
	\node at (110:3cm)[below] {$*$};
\end{tikzpicture}
\right)=2
$$
\caption{An example of relative star position}
\end{figure}
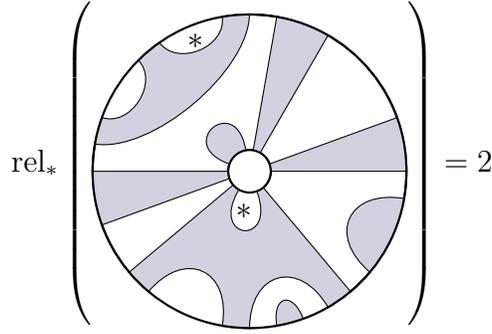

%%%%%%%%%%%%%%%%%%%%%%%%%%%%%%%%%%%%%%%%%%%%%%%%%%%
\subsection{``Generators and Relations" of $\Atl$}

\begin{defn}
Suppose $T$ is an annular tangle. A loop of $T$ is called contractible if it is contractible in 
$D\setminus D_1$. Otherwise it is called non-contractible.
\end{defn}

\begin{defn}[Atl Tangle]
An annular $(m,n)$-tangle $T$ is called an Atl $(m,n)$-tangle if $T$ has no contractible loops.
\end{defn}

\begin{defn}
Let $\Atl$ be the following small category:
\itt{Objects} $[n]$ for $n\in\N\cup\{0\pm\}$
\itt{Morphisms} Given $m,n\in \N\cup \{0\pm\}$, $\Atl(m,n)$ is the set of all triples $(T,c_+,c_-)$ where $T$ is an Atl $(m,n)$-tangle and $c_+,c_-\geq 0$.
\itt{Composition} Given $(S,a_+,a_-)\in \Atl(m,n)$ and $(T,b_+,b_-)\in \Atl(l,m)$, we define $(S,a_+,a_-)\circ (T,b_+,b_-)\in \Atl(l,n)$ to be the triple $(R,c_+,c_-)$ obtained as follows: let $R_0$ be the annular $(l,n)$-tangle $S\circ T$. Let $d_+$, respectively $d_-$, be the number of shaded, respectively unshaded, contractible loops. Let $R$ be the Atl $(l,n)$-tangle obtained from $R_0$ by removing all contractible loops, and set $c_\pm=a_\pm+b_\pm+d_\pm$.
\end{defn}

\begin{rem}
For simplicity and aesthetics, we write $T$ for the morphism $(T,0,0)\in \Mor(\Atl)$.
\end{rem}

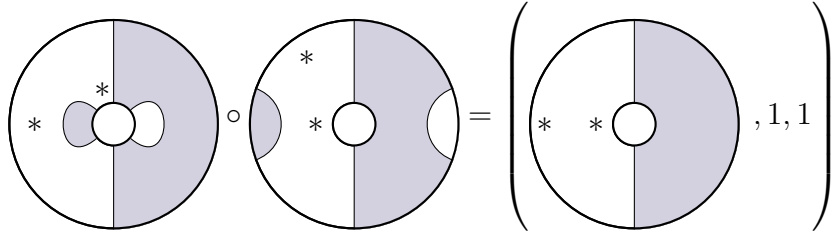
\begin{figure}[!ht]
$$
\begin{tikzpicture}[annular]
%CIRCLES AND OUTER STAR
	\clip (0,0) circle (2cm);
	\draw[ultra thick] (0,0) circle (2cm);	
	\node at (0,0)  [empty box] (T) {};
	\node at (180:1.80cm) [right] {$*$};

	\filldraw[shaded] (90:4cm)--(0,0)--(270:4cm) -- (0:4cm);	
	\filldraw[unshaded] (T.-40) .. controls ++(-40:11mm) and ++           (40:11mm) .. (T.40);
	\filldraw[shaded] (T.140) .. controls ++(140:11mm) and ++           (220:11mm) .. (T.220);

	\draw[ultra thick] (0,0) circle (2cm);
	\node at (0,0)  [empty box] (T) {};
	\node at (T.120) [above] {$*$};		
\end{tikzpicture}
\circ
\begin{tikzpicture}[annular]
%CIRCLES AND OUTER STAR
	\clip (0,0) circle (2cm);
	\draw[ultra thick] (0,0) circle (2cm);	
	\node at (0,0)  [empty box] (T) {};
	\node at (120:1.80cm) [below] {$*$};

	\filldraw[shaded] (90:4cm)--(0,0)--(270:4cm) -- (0:4cm);	
	\filldraw[unshaded] (-20:2cm) .. controls ++(160:7mm) and ++(200:7mm) .. (20:2cm) -- (20:3cm);
	\filldraw[shaded] (160:2cm) .. controls ++(-20:7mm) and ++(20:7mm) ..  (200:2cm) -- (200:3cm);

	\draw[ultra thick] (0,0) circle (2cm);
	\node at (0,0)  [empty box] (T) {};
	\node at (T.180) [left] {$*$};		
\end{tikzpicture}
=\left(
\begin{tikzpicture}[annular]
%CIRCLES AND OUTER STAR
	\clip (0,0) circle (2cm);
	\draw[ultra thick] (0,0) circle (2cm);	
	\node at (0,0)  [empty box] (T) {};
	\node at (180:1.40cm) [left] {$*$};

	\filldraw[shaded] (90:4cm)--(0,0)--(270:4cm) -- (0:4cm);	

	\draw[ultra thick] (0,0) circle (2cm);
	\node at (0,0)  [empty box] (T) {};
	\node at (T.180) [left] {$*$};		
\end{tikzpicture}\hs,
1,1\right)
$$
\caption{An example of composition in $\Atl$}
\end{figure}

\begin{defn} We give the following names to the following distinguished Atl $(n,m)$-tangles:
\item[(A)] Let $a_1$ be the only Atl $(1,0+)$-tangle with no loops, and let $a_2$ be the only Atl $(1,0-)$-tangle with no loops. 
\begin{figure}[!ht]
%%% epsilons
\begin{tikzpicture}[annular]
%CIRCLES AND OUTER STAR
	\clip (0,0) circle (2cm);
	\draw[ultra thick] (0,0) circle (2cm);	
	\node at (0,0)  [empty box] (T) {};
	\node at (95:1.60cm) [right] {$*$};
%     SHADED CAPS:  starting                          starting                      opposite end       ending
%	                               degrees                         degrees                    degrees               degrees              
	\filldraw[shaded] (T.-40) .. controls ++(-40:11mm) and ++           (40:11mm) .. (T.40);
%	DOTS AND INNER STAR
	\draw[ultra thick] (0,0) circle (2cm);
	\node at (0,0)  [empty box] (T) {};
	\node at (T.70) [above] {$*$};		
\end{tikzpicture}$\hs,$
\begin{tikzpicture}[annular]
%CIRCLES
	\clip (0,0) circle (2cm);
	\draw[ultra thick][shaded] (0,0) circle (2cm);	
	\node at (0,0)  [empty box] (T) {};
%     UNSHADED CAPS:  starting                          starting                      opposite end       ending
%	                                     degrees                         degrees                    degrees               degrees              
	\filldraw[unshaded] (T.-40) .. controls ++(-40:11mm) and ++           (40:11mm) .. (T.40);
%	DOTS AND INNER STAR		
	\draw[ultra thick] (0,0) circle (2cm);
	\node at (0,0)  [empty box] (T) {};
	\node at (T.0) [right] {$*$};		
\end{tikzpicture}
\caption{$a_1\in\Atl(1,0+)$ and $a_2\in\Atl(1,0-)$}
\end{figure}
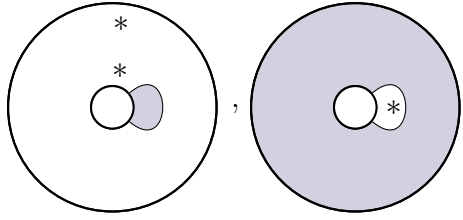
For $n\geq 2$ and $i\in\{1,\dots,2n\}$, let $a_i$ be the Atl $(n,n-1)$-tangle whose $i\thh$ and $(i+1)\thh$ (modulo $2n$) internal boundary point are joined by a string and all other internal boundary points are connected to external boundary points such that
\be
\item[(i)] If $i=1$, then the first external point is connected to the third internal point.
\item[(ii)] If $1<i<2n$, then the first external point is connected to the first internal point.
\item[(iii)] If $i=2n$, then the first external point is connected to the $(2n-1)\thh$ internal point.
\ee
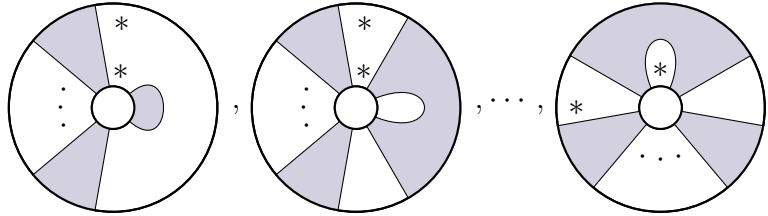
\begin{figure}[!ht]
%%% a_1
\begin{tikzpicture}[annular]
%CIRCLES AND OUTER STAR
	\clip (0,0) circle (2cm);
	\draw[ultra thick] (0,0) circle (2cm);	
	\node at (0,0)  [empty box] (T) {};
	\node at (95:1.60cm) [right] {$*$};
% THROUGH STRINGS
	\filldraw[shaded] (100:4cm)--(0,0)--(140:4cm);	
	\filldraw[shaded] (220:4cm)--(0,0)--(260:4cm);
		
%     SHADED CAPS:  starting                          starting                      opposite end       ending
%	                               degrees                         degrees                    degrees               degrees              
	\filldraw[shaded] (T.-40) .. controls ++(-40:11mm) and ++           (40:11mm) .. (T.40);
%	DOTS AND INNER STAR
	\draw[ultra thick] (0,0) circle (2cm);
	\node at (0,0)  [empty box] (T) {};
	\node at (180:1cm) {$\cdot$};
	\node at (160:1cm) {$\cdot$};
	\node at (200:1cm) {$\cdot$};
	\node at (T.70) [above] {$*$};		
\end{tikzpicture}$\hs,$
\begin{tikzpicture}[annular]
%CIRCLES AND OUTER STAR
	\clip (0,0) circle (2cm);
	\draw[ultra thick] (0,0) circle (2cm);	
	\node at (0,0)  [empty box] (T) {};
	\node at (95:1.60cm) [right] {$*$};
% THROUGH STRINGS
	\filldraw[shaded] (100:4cm)--(0,0)--(140:4cm);	
	\filldraw[shaded] (220:4cm)--(0,0)--(260:4cm);
%     UNSHADED CAPS:  starting                          starting                      opposite end       ending
%	                                     degrees                         degrees                    degrees               degrees              
	\filldraw[shaded] (0,0) -- (60:3cm) -- (0:4cm) -- (-60:3cm) -- (0,0) .. controls ++(-30:2cm) and ++(30:2cm) .. (0,0);
%	DOTS AND INNER STAR		
	\draw[ultra thick] (0,0) circle (2cm);
	\node at (0,0)  [empty box] (T) {};
	\node at (180:1cm) {$\cdot$};
	\node at (160:1cm) {$\cdot$};
	\node at (200:1cm) {$\cdot$};
	\node at (T.70) [above] {$*$};		
\end{tikzpicture}$\hs,\cdots,$
\begin{tikzpicture}[annular]
	\clip (0,0) circle (2cm);
	\draw[ultra thick] (0,0) circle (2cm);	
	\node at (0,0)  [empty box] (T) {};
	\node at (180:1.90cm) [right] {$*$};
% THROUGH STRINGS
	\filldraw[shaded] (310:4cm)--(0,0)--(350:4cm);	
	\filldraw[shaded] (190:4cm)--(0,0)--(230:4cm);
%     UNSHADED CAPS:  starting                          starting                      opposite end       ending
%	                                     degrees                         degrees                    degrees               degrees              
	\filldraw[shaded] (0,0) -- (30:3cm) -- (90:4cm) -- (150:3cm) -- (0,0) .. controls ++(120:2cm) and ++(60:2cm) .. (0,0);
%	DOTS AND INNER STAR		
	\draw[ultra thick] (0,0) circle (2cm);
	\node at (0,0)  [empty box] (T) {};
	\node at (270:1cm) {$\cdot$};
	\node at (290:1cm) {$\cdot$};
	\node at (250:1cm) {$\cdot$};
	\node at (T.90) [above] {$*$};	
\end{tikzpicture}
\caption{$a_1,a_2, \cdots,a_{2n}\in\Atl(n,n-1)$. (without the dots, $n=3$)}
\end{figure}
\item[(B)]
 Let $b_1$ be the only Atl $(0+,1)$-tangle with no loops, and let $b_2$ be the only Atl $(0-,1)$-tangle with no loops.
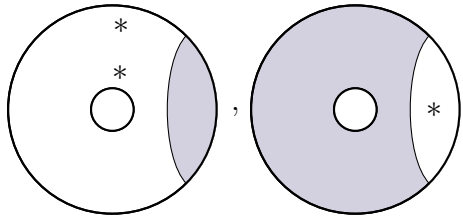
\begin{figure}[!ht]
%%% etas
\begin{tikzpicture}[annular]
%CIRCLES AND OUTER STAR
	\clip (0,0) circle (2cm);
	\draw[ultra thick] (0,0) circle (2cm);	
	\node at (0,0)  [empty box] (T) {};
	\node at (95:1.60cm) [right] {$*$};
%     SHADED CUPS:  starting                          starting                      opposite end       ending
%	                               degrees                         degrees                    degrees               degrees              
	\filldraw[shaded] (-45:3cm) --(-45:2cm) .. controls ++(135:7mm) and ++(-135:7mm) ..         (45:2cm) -- (45:3cm);	
%	DOTS AND INNER STAR
	\draw[ultra thick] (0,0) circle (2cm);
	\node at (0,0)  [empty box] (T) {};
	\node at (T.70) [above] {$*$};		
\end{tikzpicture}$\hs,$
\begin{tikzpicture}[annular]
%CIRCLES 
	\clip (0,0) circle (2cm);
	\draw[ultra thick][shaded] (0,0) circle (2cm);	
	\node at (0,0)  [empty box] (T) {};

%     UNSHADED CUPS:  starting                          starting                      opposite end       ending
%	                                     degrees                         degrees                    degrees               degrees              
	\filldraw[unshaded] (-45:3cm) --(-45:2cm) .. controls ++(135:7mm) and ++(-135:7mm) ..         (45:2cm) -- (45:3cm);	
%	DOTS AND OUTER STAR		
	\node at (0:1.80cm) [left] {$*$};
	\draw[ultra thick] (0,0) circle (2cm);
	\node at (0,0)  [empty box] (T) {};
%	\node at (T.70) [above] {$*$};		
\end{tikzpicture}
\caption{$b_1\in\Atl(0+,1)$ and $b_2\in\Atl(0-,1)$}
\end{figure}
For $n\geq 1$ and $i\in\{1,\dots,2n+2\}$, let $b_i$ be the Atl $(n,n+1)$-tangle whose $i\thh$ and $(i+1)\thh$ (modulo $2n+2$) external boundary point are joined by a string and all other internal boundary points are connected to external boundary points such that
\be
\item[(i)] If $i=1$, then the third external point is connected to the first internal point.
\item[(ii)] If $1<i$, then the first external point is connected to the first internal point.
\item[(iii)] If $i=2n+2$, then the first internal point is connected to the $(2n+1)\thh$ external point.
\ee
\begin{figure}[!ht]
%%% a_1
\begin{tikzpicture}[annular]
%CIRCLES AND OUTER STAR
	\clip (0,0) circle (2cm);
	\draw[ultra thick] (0,0) circle (2cm);	
	\node at (0,0)  [empty box] (T) {};
	\node at (135:1.80cm) [right] {$*$};
% THROUGH STRINGS
	\filldraw[shaded] (-20:4cm)--(0,0)--(20:4cm);	
	\filldraw[shaded] (160:4cm)--(0,0)--(200:4cm);
	\filldraw[shaded] (250:4cm)--(0,0)--(290:4cm);		
%     SHADED CUPS:  starting                          starting                      opposite end       ending
%	                               degrees                         degrees                    degrees               degrees              
	\filldraw[shaded] (70:2cm) .. controls ++(250:7mm) and ++(290:7mm) ..         (110:2cm) -- (110:3cm);
%	DOTS AND INNER STAR
	\draw[ultra thick] (0,0) circle (2cm);
	\node at (0,0)  [empty box] (T) {};
	\node at (-135:1cm) {$\cdot$};
	\node at (-145:1cm) {$\cdot$};
	\node at (-125:1cm) {$\cdot$};
	\node at (T.90) [above] {$*$};		
\end{tikzpicture}$\hs,$
\begin{tikzpicture}[annular]
%CIRCLES AND OUTER STAR
	\clip (0,0) circle (2cm);
	\draw[ultra thick] (0,0) circle (2cm);	
	\node at (0,0)  [empty box] (T) {};
	\node at (120:1.80cm) [right] {$*$};
% THROUGH STRINGS
	\filldraw[shaded] (160:4cm)--(0,0)--(200:4cm);
	\filldraw[shaded] (250:4cm)--(0,0)--(290:4cm);
%     UNSHADED CUPS:  starting                          starting                      opposite end       ending
%	                                     degrees                         degrees                    degrees               degrees              
	\filldraw[shaded] (25:2cm) .. controls ++(205:7mm) and ++(245:7mm) .. (65:2cm) -- (65:3cm)--(90:3cm) -- (90:2cm) --(0,0) -- (0:2cm) -- (0:3cm);
%	DOTS AND INNER STAR		
	\draw[ultra thick] (0,0) circle (2cm);
	\node at (0,0)  [empty box] (T) {};
	\node at (-135:1cm) {$\cdot$};
	\node at (-145:1cm) {$\cdot$};
	\node at (-125:1cm) {$\cdot$};
	\node at (T.120) [above] {$*$};		
\end{tikzpicture}$\hs,\cdots,$
\begin{tikzpicture}[annular]
	\clip (0,0) circle (2cm);
	\draw[ultra thick] (0,0) circle (2cm);	
	\node at (0,0)  [empty box] (T) {};
	\node at (135:1.90cm) [right] {$*$};
% THROUGH STRINGS
	\filldraw[shaded] (-20:4cm)--(0,0)--(20:4cm);
	\filldraw[shaded] (250:4cm)--(0,0)--(290:4cm);
%     UNSHADED CUPS:  starting                          starting                      opposite end       ending
%	                                     degrees                         degrees                    degrees               degrees              
	\filldraw[shaded] (115:2cm) .. controls ++(295:7mm) and ++(335:7mm) .. (155:2cm) -- (155:3cm)--(180:3cm) -- (180:2cm) --(0,0) -- (90:2cm) -- (90:3cm);
%	DOTS AND INNER STAR		
	\draw[ultra thick] (0,0) circle (2cm);
	\node at (0,0)  [empty box] (T) {};
	\node at (315:1cm) {$\cdot$};
	\node at (305:1cm) {$\cdot$};
	\node at (325:1cm) {$\cdot$};
	\node at (T.-155)[below] {$*$};	
\end{tikzpicture}
\caption{$b_1,b_2, \cdots, b_{2n+2}\in\Atl(n,n+1)$ (without the dots, $n=3$)}
\end{figure}
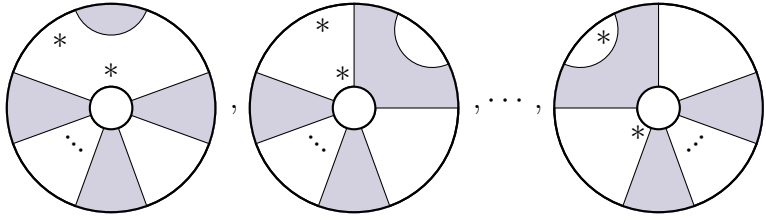
\item[(T)]
For $n=1$, let $t$ be the identity $(1,1)$-tangle. For $n\geq 2$, let $t$ be the Atl $(n,n)$-tangle where all internal points are connected to external point such that the third external point is connected to the first internal point. 
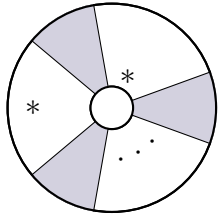
\begin{figure}[!ht]
%%% t
\begin{tikzpicture}[annular]
%CIRCLES AND OUTER STAR
	\clip (0,0) circle (2cm);
	\draw[ultra thick] (0,0) circle (2cm);	
	\node at (0,0)  [empty box] (T) {};
	\node at (180:1.80cm) [right] {$*$};
% THROUGH STRINGS
	\filldraw[shaded] (340:4cm)--(0,0)--(20:4cm);	
	\filldraw[shaded] (100:4cm)--(0,0)--(140:4cm);
	\filldraw[shaded] (220:4cm)--(0,0)--(260:4cm);		
%	DOTS AND INNER STAR
	\draw[ultra thick] (0,0) circle (2cm);
	\node at (0,0)  [empty box] (T) {};
	\node at (300:1cm) {$\cdot$};
	\node at (320:1cm) {$\cdot$};
	\node at (280:1cm) {$\cdot$};
	\node at (T.45) [above] {$*$};		
\end{tikzpicture}
\caption{$t\in\Atl(n,n)$ (without the dots, $n=3$)}
\end{figure}
\end{defn}

\begin{thm}\label{Atlrelations}
The following relations hold in $\Atl$:
\item[(1)] $a_ia_j = a_{j-2}a_i$ for $i<j-1$ and $(i,j)\neq (1,2n)$,
\item[(2)] $b_ib_j=b_{j+2}b_i$ for $i\leq j$ and $(i,j)\neq (1,2n+2)$,
\item[(3)] $t^n=\id_{[n]}$,
\item[(4)] $a_it = ta_{i-2}$ for $i\geq 3$, 
\item[(5)] $b_i t = tb_{i-2}$ for $i\geq 3$,
\item[(6)] $(\id_{[0+]},1,0)=a_1b_1\in\Atl(0+,0+)$ and $(\id_{[0+]},0,1)=a_2b_2\in\Atl(0-,0-)$. If $a_ib_j\in\Atl(n, n)$ with $n\geq 1$, then 
$$\D
a_ib_j=
\begin{cases}
t^{-1} &\hsp{if} (i,j)=(1,2n+2)\\
b_{j-2}a_i &\hsp{if} i<j-1, (i,j)\neq (1,2n+2)\\
\id_{[n]} &\hsp{if} i=j-1\\
(\id_{[n]},1,0) &\hsp{if} i=j\hsp{and $i$ is odd} \\
(\id_{[n]},0,1) &\hsp{if} i=j\hsp{and $i$ is even} \\
\id_{[n]} &\hsp{if} i=j+1\\
b_{j}a_{i-2} &\hsp{if} i>j+1, (i,j)\neq (2n+2,1)\\
t &\hsp{if} (i,j)=(2n+2,1)
\end{cases}$$
\item[(7)] $(\id_{[n]},1,0)$ and $(\id_{[n]},0,1)$ commute with all $(T,c_+,c_-)\in\Atl(n,n)$ where $n\in\N\cup\{0\pm\}$.
\end{thm}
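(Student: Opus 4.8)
The plan is to verify each relation directly as an equality of morphisms in $\Atl$. Recall that a morphism is a triple $(T,c_+,c_-)$, and two such triples are equal precisely when the underlying Atl-tangles are diffeomorphic and the pairs of loop counts agree; so every assertion reduces to a finite, explicit computation: compose the distinguished tangles on the left according to the definition of composition in $\Atl$ (isotope $D_0$ of the upper tangle onto $D_1$ of the lower, matching marked points and starred intervals, join and smooth the strings, then delete the contractible loops while recording how many are shaded and how many are unshaded), and compare the result with the right-hand side. Since $a_i$, $b_i$ and $t$ are given by completely explicit pairings of boundary points together with prescribed positions for $*_0$ and $*_1$, each computation is purely combinatorial once one fixes bookkeeping for (a) the induced partition of the surviving boundary points into $\caps$, $\cups$ and $\ts$, (b) the checkerboard shading, and (c) the locations of the two starred simple intervals.

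First I would dispose of the structural relations. For (3), $t$ is the annular rotation taking the $k\thh$ external point to the $(k-2)\thh$ internal point; iterating $n$ times takes external point $k$ to internal point $k-2n\equiv k\pmod{2n}$ and carries $*_0$ onto $*_1$, so $t^n=\id_{[n]}$. For (1) and (2), the hypothesis $i<j-1$ forces the two caps (resp.\ cups) being composed to lie on disjoint arcs, so no loop is created and the composite is read off by reindexing: deleting the pair $\{i,i+1\}$ shifts every later label down by $2$, which is exactly $a_ia_j=a_{j-2}a_i$ (resp.\ $b_ib_j=b_{j+2}b_i$). Here the clauses (i)--(iii) in the definitions of $a_i$ and $b_i$ are precisely what force $*_0$ and $*_1$ into the declared positions, and the excluded extremal pairs are exactly the configurations where this bookkeeping would instead manufacture a rotation. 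Relations (4) and (5) are the same computation with one tangle replaced by $t$: commuting a cap (resp.\ cup) past a rotation by $2$ shifts its index by $2$, and the constraint $i\geq 3$ rules out wraparound. Relation (7) is immediate, since $(\id_{[n]},1,0)$ is the identity tangle carrying one extra shaded contractible loop in the open region $D\setminus D_1$, disjoint from every other string; composing it on either side of $(T,c_+,c_-)$ simply adds $1$ to $c_+$, and $(\id_{[n]},0,1)$ likewise adds $1$ to $c_-$.

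The substance of the theorem is relation (6), which catalogues every way a cup can meet a cap. I would organize the verification by the position of the capped internal pair $\{i,i+1\}$ relative to the cupped external pair $\{j,j+1\}$ after the two circles are glued. When $i<j-1$, away from the wraparound, the cap and cup lie on disjoint arcs and commute with the index shift $j\mapsto j-2$, giving $b_{j-2}a_i$; the mirror case $i>j+1$ gives $b_ja_{i-2}$. When $i=j-1$ or $i=j+1$ the cap string and the adjacent cup string fuse into a single through-string, no loop appears, and the composite is $\id_{[n]}$. When $i=j$ the cap and cup close into one contractible loop bounded by the region just inside $D_1$; by the shading convention this region is unshaded exactly when $i$ is odd, so one gets $(\id_{[n]},1,0)$ for $i$ odd and $(\id_{[n]},0,1)$ for $i$ even, with the degenerate $n=0$ instances giving the two stated equalities $a_1b_1$ and $a_2b_2$. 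Finally the extremal pairs $(i,j)=(1,2n+2)$ and $(2n+2,1)$ are where the cap or cup straddles a starred interval: the gluing then forces a cyclic relabeling of all the boundary points, which is precisely $t^{-1}$, respectively $t$.

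The step I expect to be the main obstacle is exactly this last family of cases in (6), together with the extremal exclusions in (1) and (2): one must track $*_0$ and $*_1$ through the gluing with complete care, because the definitions of $a_i$, $b_i$ and $t$ were rigged (through the clauses (i)--(iii)) so that the generic formulas hold on the nose, and the only way to confirm the rigging is consistent is to chase the starred positions through each composite and check that the shading matches. Everything else is routine once the bookkeeping for pairings and shadings is in place; a representative sample of the pictures (analogous to the figures already displayed) suffices to make the pattern transparent.
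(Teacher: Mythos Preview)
Your proposal is correct and is essentially the same approach as the paper's own proof, which consists of the single sentence ``These relations can be easily verified by drawing pictures.'' You have simply supplied the bookkeeping that the paper leaves implicit; the only slip is in the $i=j$ case of (6), where the enclosed region is \emph{shaded} (not unshaded) when $i$ is odd, but your stated conclusion $(\id_{[n]},1,0)$ for $i$ odd is nonetheless correct.
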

\begin{proof}
These relations can be easily verified by drawing pictures.
\end{proof}

%%%%%%%%%%%%%%%%%%%%%%%%%%%%%%%%%%%%%%%%%%%%%%%%%%%
\subsection{Involution and Tangle Type}

\begin{defn}
The map $*\colon \Atl\to\Atl$ given by $[n]^*=[n]$ for all $n\in\N\cup\{0\pm\}$ and $(T,c_+,c_-)^*=(T^*,c_+,c_-)$ defines an involution on $\Atl$.
\end{defn}
\begin{cor}
We have an isomorphism of categories $\Atl\cong\Atl\op$.
\end{cor}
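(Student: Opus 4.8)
The plan is simply to invoke the Remark that any involutive category is isomorphic to its opposite, with the involution being the one on $\Atl$ exhibited in the Definition immediately preceding the statement; the only work is to spell out the (trivial) unwinding of composition in the opposite category. Concretely, I would define a covariant functor $\Phi\colon\Atl\to\Atl\op$ by $\Phi([n])=[n]$ for all $n\in\N\cup\{0\pm\}$ and $\Phi(T,c_+,c_-)=(T^*,c_+,c_-)$. This is well defined on morphisms: if $(T,c_+,c_-)\in\Atl(m,n)$ then $T^*$ is an Atl $(n,m)$-tangle (reflecting about the circle of radius $3/4$ sends an annular $(m,n)$-tangle to an annular $(n,m)$-tangle and does not create contractible loops), so $(T^*,c_+,c_-)\in\Atl(n,m)=\Atl\op(m,n)$.

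Next I would check $\Phi$ is a functor. It preserves identities because $\id_{[n]}^*=\id_{[n]}$, as recorded in the Definition of $*$ on $\Atl$. For composition, take $(S,a_+,a_-)\in\Atl(m,n)$ and $(T,b_+,b_-)\in\Atl(l,m)$ and let $(R,c_+,c_-)$ be their composite in $\Atl$, where $c_\pm=a_\pm+b_\pm+d_\pm$ and $d_\pm$ counts the contractible loops removed when forming $S\circ T$. Reflecting everything about the circle of radius $3/4$ turns $S\circ T$ into $T^*\circ S^*$ (the tangle-level identity $(S\circ T)^*=T^*\circ S^*$) and carries the $d_\pm$ contractible loops of $S\circ T$ bijectively onto those of $T^*\circ S^*$ with shadings preserved, so $(R,c_+,c_-)^*=(T^*,b_+,b_-)\circ(S^*,a_+,a_-)$ in $\Atl$. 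Unwinding the definition of composition in the opposite category, this equality is exactly the statement that $\Phi$ is a functor $\Atl\to\Atl\op$.

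Finally, define $\Psi\colon\Atl\op\to\Atl$ by the same formulas $\Psi([n])=[n]$, $\Psi(T,c_+,c_-)=(T^*,c_+,c_-)$; the identical verification shows $\Psi$ is a functor. Since $(T^*)^*=T$ for annular tangles, we have $\Psi\circ\Phi=\id_{\Atl}$ and $\Phi\circ\Psi=\id_{\Atl\op}$, so $\Phi$ is an isomorphism of categories and $\Atl\cong\Atl\op$. I do not expect any genuine obstacle: the argument rests entirely on the already-established facts $[n]^*=[n]$, $(T^*)^*=T$, and $(S\circ T)^*=S^*\circ T^*$ for annular tangles, together with the trivial observation that the reflection neither creates nor destroys contractible loops; the single point worth a sentence of care is the bookkeeping that rephrases "$*$ is a contravariant self-functor of $\Atl$" as "$\Phi$ is a covariant functor into $\Atl\op$."
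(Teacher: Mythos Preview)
Your proposal is correct and matches the paper's approach exactly: the corollary is stated immediately after the involution on $\Atl$ is defined, and the paper gives no separate proof because it has already remarked that any involutive category is isomorphic to its opposite. One small slip to fix: in your final paragraph you wrote $(S\circ T)^*=S^*\circ T^*$, but of course you mean $(S\circ T)^*=T^*\circ S^*$, as you used correctly earlier.
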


\begin{prop}\label{Atlinvolution}
The involution on $\Atl$ satisfies
\item[(A/B)] 
$a_i^*=b_i$ for $i=1,2$ if $a_1\in \Atl(1,0+)$ and $a_2\in\Atl(1,0-)$. For $n\geq 2$ and $a_i\in\Atl(n,n-1)$, so $i\in\{1,\dots,2n\}$, $a_i^*=b_i\in\Atl(n-1,n)$.
\item[(T)] For $n\in\N$ and $t\in\Atl(n,n)$, $t^*=t^{-1}$.
\item[(D)] For $n\in\N\cup\{0\pm\}$, $(\id_{[n]},1,0)^*=(\id_{[n]},1,0)$ and $(\id_{[n]},0,1)^*=(\id_{[n]},0,1)$.
\end{prop}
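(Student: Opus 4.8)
The plan is to read all three statements directly off the geometric definition of the involution; no relation from Theorem~\ref{Atlrelations} is needed. Recall that $T^*$ is obtained by reflecting $T$ about the circle of radius $3/4$. This reflection interchanges $D_0(T)$ with $D_1(T)$ and $*_0(T)$ with $*_1(T)$, it preserves shadings, and it preserves the angular position of every marked point. Since the marked points of $D_i(T^*)$ are numbered clockwise starting from $*_i(T^*)$, and since both the distinguished intervals and the marked points keep their angular positions under the reflection, the reflection carries the $k\thh$ internal boundary point of $T$ to the $k\thh$ external boundary point of $T^*$ and the $k\thh$ external boundary point of $T$ to the $k\thh$ internal boundary point of $T^*$. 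It also turns caps of $T$ into cups of $T^*$ and conversely, and it sends through strings to through strings. Everything below is an application of these bookkeeping rules to finitely many explicit pictures.

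Part~(D) is immediate from the definition of $*$ on $\Atl$, which sets $(\id_{[n]},c_+,c_-)^*=(\id_{[n]}^*,c_+,c_-)$; reflecting the identity tangle (all through strings, no twist, $*_0$ and $*_1$ aligned) returns the same tangle, so $\id_{[n]}^*=\id_{[n]}$ and hence $(\id_{[n]},1,0)^*=(\id_{[n]},1,0)$ and $(\id_{[n]},0,1)^*=(\id_{[n]},0,1)$. For part~(T), $t^*$ is again a loopless Atl $(n,n)$-tangle all of whose strings are through strings, so $t^*=t^m$ for a unique $m$ modulo $n$; when $n=1$ this forces $t^*=\id=t^{-1}$. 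For $n\geq 2$, the defining property ``the third external point is connected to the first internal point'' says $t$ joins internal point $k$ to external point $k+2$ (indices modulo $2n$); applying the bookkeeping rules, $t^*$ joins external point $k$ to internal point $k+2$, i.e.\ internal point $j$ to external point $j-2$, which is exactly $t^{-1}$. (Equivalently, $t$ generates a copy of $\Z/n\Z$ inside $\Atl(n,n)$, the anti-automorphism $*$ restricts to a group automorphism of this abelian group, and the geometry forces it to invert the generator.)

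The content of the proposition is part~(A/B). For the degenerate objects, $a_1$ is the unique loopless Atl $(1,0+)$-tangle, its reflection is a loopless Atl $(0+,1)$-tangle, and there is only one such tangle, namely $b_1$, so $a_1^*=b_1$; the same argument gives $a_2^*=b_2$. Now fix $n\geq 2$ and $a_i\in\Atl(n,n-1)$ with $i\in\{1,\dots,2n\}$. The tangle $a_i$ has a single cap, joining internal points $i$ and $i+1$ modulo $2n$, with every other internal point joined to an external point by a through string. By the bookkeeping rules, $a_i^*$ has a single cup joining external points $i$ and $i+1$ modulo $2n$, with every other external point met by a through string, and it lies in $\Atl(n-1,n)$ --- which is exactly where the tangles $b_i$, $i\in\{1,\dots,2n\}$, are defined. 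It then remains to match the normalization clauses: the clause defining $a_i$ prescribes which internal point the first external point connects to, and under the reflection this becomes the prescription of which external point the first internal point of $a_i^*$ connects to. One checks directly that ``first external point $\leftrightarrow$ third internal point'' ($i=1$), ``first external point $\leftrightarrow$ first internal point'' ($1<i<2n$), and ``first external point $\leftrightarrow$ $(2n-1)\thh$ internal point'' ($i=2n$) are transported to precisely the three clauses defining $b_i\in\Atl(n-1,n)$. Finally, reflection preserves shadings, and for an Atl-tangle the $2$-coloring of the regions is determined by the string pattern together with the requirement that the $*$-intervals meet unshaded regions, so the shadings agree automatically; hence $a_i^*=b_i$, and since $a_i$ and $b_i$ are loopless, no coupling constants intervene.

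The only real obstacle is the bookkeeping: one must keep the two cyclic numberings --- internal points measured from $*_1$, external points measured from $*_0$ --- straight while reflecting, and confirm that the orientation-reversing character of the reflection is exactly cancelled by the simultaneous swap $*_0\leftrightarrow *_1$, so that clockwise orderings are carried to clockwise orderings and no shift of indices is introduced. Once that point is pinned down, each claim reduces to comparing a small labelled picture with its mirror image.
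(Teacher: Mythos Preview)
Your argument is correct and is exactly the approach the paper takes: the paper's own proof is the single word ``Obvious,'' and what you have written is a careful unpacking of that word --- reading each claim off the geometric definition of the reflection and matching the defining clauses for the $a_i$'s against those for the $b_i$'s after swapping inner and outer boundaries. There is no alternative route to compare; you have simply supplied the details the author left to the reader.
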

\begin{proof}
Obvious.
\end{proof}

\begin{defn}\label{types}
An Atl $(m,n)$-tangle $T$ is said to be of
\be
\itt{Type I} if $T$ is either $\id_{[n]}$ for some $n\in\N\cup\{\pm 0\}$, or $T$ has no cups, at least one cap, and no non-contractible loops, with the limitation on $*_0(T)$ that exactly one of the following occurs:
\be
\item[(I-1)] There are no through strings, so $*_0(T)$ is uniquely determined. Note that if $n=0-$, then there is no $*_0(T)$.
\item[(I-2)] There are through strings. Using $*_1(T)$ as our reference, we go counterclockwise to the first through string, and travel outward until we reach a marked point $p$ of $D_0(T)$. The simple interval meeting $p$ whose interior touches an unshaded region is $*_0(T)$.
\ee
\itt{Type II} if $T$ has no cups or caps, so $T$ is a power of the rotation (including the identity tangle) or an annular $(0,0)$-tangle with $k$ non-contractible loops (here we do not specify $0\pm$).
\itt{Type III} if $T$ is either $\id_{[n]}$ for some $n\in\N\cup\{\pm 0\}$, or $T$ has no caps, at least one cup, and no non-contractible loops, with the limitation on  $*_1(T)$, that exactly one of the following occurs: 
\be
\item[(III-1)] There are no through strings, so $*_1(T)$ is uniquely determined. Note that if $m=0-$, then there is no $*_1(T)$.
\item[(III-2)] There are through strings. Using $*_0(T)$ as our reference, we go counterclockwise to the first through string, and travel outward until we reach a marked point $p$ of $D_1(T)$. The simple interval meeting $p$ whose interior touches an unshaded region is $*_1(T)$.
\ee
\ee
Denote the set of all tangles of Type i by $\T_i$, and denote the set of all $(m,n)$-tangles of Type $i$  by $\T_i(m,n)$  for $i\in\{I,II,III\}$.
\end{defn}

\begin{rem}\label{typerem}
Note that
\be
\item[(1)] the $a_i$'s are all Type I, and
\item[(2)] the $b_i$'s are all Type III.
\ee
\end{rem}

\begin{figure}[!ht]
$$
\begin{tikzpicture}[annular]
	\clip (0,0) circle (2cm);
	\draw[ultra thick] (0,0) circle (2cm);	
	\node at (0,0)  [empty box] (T) {};
%THROUGH STRINGS
	\filldraw[shaded] (0:4cm)--(0,0)--(20:4cm);	
	\filldraw[shaded] (60:4cm)--(0,0)--(80:4cm);
	\filldraw[shaded] (180:4cm)--(0,0)--(200:4cm);
	\filldraw[shaded] (220:5cm)--(0,0)--(310:5cm);		
%CAPS AND CUPS
	\filldraw[shaded] (T.100) .. controls ++(100:11mm) and ++           (160:11mm) .. (T.160);
	\filldraw[unshaded] (T.240) .. controls ++(240:11mm) and ++           (290:11mm) .. (T.290);
	\draw[ultra thick] (0,0) circle (2cm);	
	\node at (0,0)  [empty box] (T) {};
	\node at (T.260) [below] {$*$};
	\node at (-20:2cm)[left] {$*$};
\end{tikzpicture}\hs,\hs
\begin{tikzpicture}[annular]
	\clip (0,0) circle (2cm);
	\draw[ultra thick] (0,0) circle (2cm);	
	\node at (0,0)  [empty box] (T) {};
%THROUGH STRINGS
	\filldraw[shaded] (0:4cm)--(0,0)--(20:4cm);	
	\filldraw[shaded] (60:4cm)--(0,0)--(80:4cm);
	\filldraw[shaded] (180:4cm)--(0,0)--(200:4cm);
	\filldraw[shaded] (220:5cm)--(0,0)--(310:5cm);		
%CAPS AND CUPS
%	\filldraw[shaded] (T.100) .. controls ++(100:11mm) and ++           (160:11mm) .. (T.160);
%	\filldraw[unshaded] (T.240) .. controls ++(240:11mm) and ++           (290:11mm) .. (T.290);
	\filldraw[shaded] (90:4cm)--(90:2cm) .. controls ++(270:11mm) and ++(350:11mm) .. (170:2cm) -- (170:4cm);
	\filldraw[unshaded] (100:4cm)--(100:2cm)  .. controls ++(280:6mm) and ++(305:6mm) ..         (125:2cm) -- (125:4cm);
	\filldraw[unshaded] (135:4cm)--(135:2cm)  .. controls ++(315:6mm) and ++(340:6mm) ..         (160:2cm) -- (160:4cm);
	\filldraw[unshaded] (270:4cm)--(270:2cm)  .. controls ++(90:11mm) and ++(120:11mm) ..         (300:2cm) -- (300:4cm);
	\filldraw[shaded] (280:4cm)--(280:2cm)  .. controls ++(100:6mm) and ++(110:6mm) .. (290:2cm) -- (290:4cm);
	\filldraw[unshaded] (230:4cm)--(230:2cm)  .. controls ++(50:11mm) and ++(80:11mm) ..         (260:2cm) -- (260:4cm);
	\filldraw[shaded] (320:4cm)--(320:2cm)  .. controls ++(140:11mm) and ++(170:11mm) ..         (350:2cm) -- (350:4cm);			
%	\filldraw[shaded] (T.180) .. controls ++(180:11mm) and ++(120:11mm) .. (T.120);		
	\draw[ultra thick] (0,0) circle (2cm);	
	\node at (0,0)  [empty box] (T) {};
	\node at (T.110) [above] {$*$};
	\node at (110:2cm)[below] {$*$};
\end{tikzpicture}
$$
\caption{Examples of tangles of Types I and III}
\end{figure}
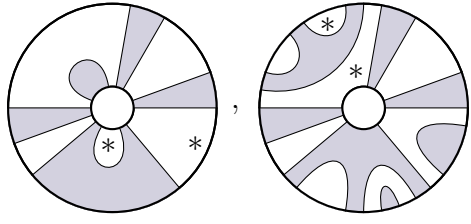

\begin{nota}
We will use the notation $s_+=a_2b_1\in\Atl(0+,0-)$ and $s_-=a_1b_2\in\Atl(0-,0+)$.
\end{nota}

\begin{figure}[!ht]
$$
\begin{tikzpicture}[annular]
%CIRCLES AND OUTER STAR
	\clip (0,0) circle (2cm);
	\filldraw[shaded] (0,0) circle (2cm);	
	\node at (0,0)  [empty box] (T) {};
%	\node at (120:1.80cm) [below] {$*$};

	\draw[unshaded] (0,0) circle (1.2cm);

	\draw[ultra thick] (0,0) circle (2cm);
	\node at (0,0)  [empty box] (T) {};
	\node at (T.180) [left] {$*$};		
\end{tikzpicture}
\hs,\hs
\begin{tikzpicture}[annular]
%CIRCLES AND OUTER STAR
	\clip (0,0) circle (2cm);
	\draw[ultra thick] (0,0) circle (2cm);	
	\node at (0,0)  [empty box] (T) {};
	\node at (180:1.80cm) [right] {$*$};

	\filldraw[shaded] (0,0) circle (1.2cm);

	\draw[ultra thick] (0,0) circle (2cm);
	\node at (0,0)  [empty box] (T) {};
%	\node at (T.120) [above] {$*$};		
\end{tikzpicture}
$$
\caption{Type II tangles $s_+,s_-$}
\end{figure}
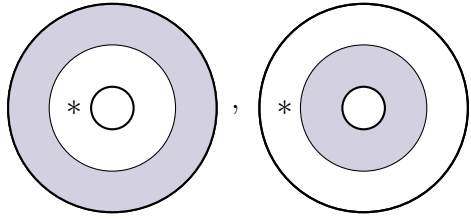

\begin{rem}
For the case $a_i b_j\colon[0]\to[0]$ (where we do not specify $\pm$), a suitable version of relation (6) reads
$$
a_i b_j=
\begin{cases}
s_- &\hsp{if} (i,j)=(1,2)\\
(\id_{[0+]},1,0) &\hsp{if} i=j=1 \\
(\id_{[0-]},0,1) &\hsp{if} i=j=2 \\
s_+ &\hsp{if} (i,j)=(2,1).
\end{cases}
$$ 
Note that we replace $t^{\pm 1}$ with $s_\pm$, which supports Graham and Lehrer's reasoning that the rotation converges to the non-contractible loop as $n\to 0$ in \cite{MR1659204}.
\end{rem}

\begin{lem}\label{typeinvolution} Let $m,n\in\N\cup\{0\pm\}$. Types are related to the involution as follows: 
\be
\item[(1)] $T\in\T_{I}(m,n)$ if and only if $T^*\in\T_{III}(m,n)$, and
\item[(2)]  If $T\in \T_{II}(n,n)$, then $T^*\in\T_{II}(n,n)$.
\ee
\end{lem}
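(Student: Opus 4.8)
The plan is to read both statements off the explicit description of the involution as a reflection. Write $\rho$ for the reflection about the circle of radius $3/4$, so $T^*=\rho(T)$ after relabeling the boundary circles. First I would record the features of $\rho$ that get used, all immediate from the fact that such a reflection fixes angular coordinates while interchanging the inside and outside of the circle: $\rho$ interchanges $D_0(T)$ with $D_1(T^*)$ and $D_1(T)$ with $D_0(T^*)$; it interchanges $*_0(T)$ with $*_1(T^*)$ and $*_1(T)$ with $*_0(T^*)$; it preserves the clockwise cyclic order of the marked points on each circle, hence the clockwise numbering of through strings --- this is exactly the fact behind the already-noted identity $\capind(T)=\cupind(T^*)$; it carries $\caps(T)$ bijectively onto $\cups(T^*)$ and $\cups(T)$ bijectively onto $\caps(T^*)$; it carries non-contractible loops to non-contractible loops; it preserves shadings; and it fixes every identity tangle.

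Granting this, (2) is immediate: a tangle in $\T_{II}(n,n)$ has neither cups nor caps, so by the cap/cup interchange its adjoint also has neither caps nor cups and hence again lies in $\T_{II}(n,n)$. For (1), I would argue that $T\in\T_I\Rightarrow T^*\in\T_{III}$ and then obtain the converse from $(T^*)^*=T$. So suppose $T\in\T_I$. If $T=\id_{[n]}$ then $T^*=\id_{[n]}$, which is of Type III. Otherwise $T$ has no cups, at least one cap and no non-contractible loops, and the listed properties of $\rho$ give that $T^*$ has no caps, at least one cup and no non-contractible loops; it remains to see that $*_1(T^*)=\rho(*_0(T))$ is the interval singled out by (III-1) or (III-2). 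If $T$ has no through strings it satisfies (I-1): $*_0(T)$ is forced, and $\rho$ carries it to the forced choice on $D_1(T^*)$ demanded by (III-1), the remark about $0-$ transporting since the external count of $T$ is the internal count of $T^*$. If $T$ has a through string, (I-2) produces $*_0(T)$ from $*_1(T)$ by travelling counterclockwise along $D_1(T)$ to the first through string, along that through string to a marked point $p$ of $D_0(T)$, and taking the simple interval at $p$ meeting an unshaded region; applying $\rho$ --- which preserves clockwise order, through-string incidence and shadings and sends $*_1(T)\mapsto *_0(T^*)$, $D_1(T)\mapsto D_0(T^*)$, $D_0(T)\mapsto D_1(T^*)$ --- turns this word for word into the recipe (III-2) for $*_1(T^*)$ from $*_0(T^*)$. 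Hence $T^*$ is of Type III.

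For the converse, observe that the definitions of $\T_I$ and $\T_{III}$ are mirror images under the simultaneous interchange $D_0\leftrightarrow D_1$, $*_0\leftrightarrow *_1$, caps $\leftrightarrow$ cups, so the argument just given applies verbatim with the two boundary circles exchanged to yield $S\in\T_{III}\Rightarrow S^*\in\T_I$; applying this to $S=T^*$ and using $(T^*)^*=T$ completes (1). The one place that calls for care is the word-for-word matching of procedures (I-2) and (III-2) under $\rho$ (in particular that $\rho$ really does preserve the sense ``counterclockwise'' and the shading-labels used there); everything else is routine bookkeeping with the properties of $\rho$ listed at the outset.
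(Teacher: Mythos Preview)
Your argument is correct and is precisely an unpacking of what the paper means by ``Obvious'' (that is the entirety of the paper's proof). You have identified exactly the right mechanism---the radial reflection $\rho$ swaps $D_0\leftrightarrow D_1$, $*_0\leftrightarrow *_1$, caps $\leftrightarrow$ cups, preserves shadings and the clockwise sense on each boundary circle---and then checked that the defining clauses (I-1)/(I-2) transport verbatim to (III-1)/(III-2). Your closing caveat about verifying that $\rho$ preserves ``counterclockwise'' and shading is well placed: that is indeed the only point where one could slip, and it holds because the reflection fixes the angular coordinate (this is also what underlies the paper's earlier remark that $\capind(T)=\cupind(T^*)$).
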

\begin{proof}
Obvious.
\end{proof}

\begin{prop}\label{capindex} Let $m,n\in\N\cup\{0\pm\}$.
\be
\itt{Type I} Any $T\in\T_I(m,n)$ is uniquetly determined by $\capind(T)$. Moreover, $\rel_*(T)\in\{0,+(0),-(0)\}$.
\itt{Type II} Suppose $m=n\in\N$ or $m,n\in\{0+,0-\}$. Any $T\in\T_{II}(m,n)$ is uniquely determined by $\rel_*(T)$.
\itt{Type III} Any $T\in\T_{III}(m,n)$ is uniquely determined by $\cupind(T)$. Moreover, $\rel_*(T)\in\{0,+(0),-(0)\}$.
\ee
\end{prop}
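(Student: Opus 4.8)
The plan is to prove each of the three cases by a direct combinatorial argument, using the defining data (cap/cup indices, relative star position) together with the rigidity of planar non-crossing diagrams. The core observation is that for a planar tangle with no non-contractible loops, once we know where every cap (or cup) sits on $D_1(T)$ (or $D_0(T)$) and how the through strings are attached, the diffeomorphism class is forced: the non-crossing condition and the requirement that adjacent regions have opposite shadings leave no freedom. So in each case I would (i) recover the nesting/partition structure of the caps (resp. cups) from the index data, (ii) argue that the remaining marked points are joined by through strings in the unique order-preserving way, and (iii) pin down the placement of the missing star using the Type-I (resp. Type-III) convention.

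For Type I, let $T\in\T_I(m,n)$. First I would observe that $\capind(T)$, being the increasing sequence of marked points to which the oriented caps point, determines the set of ``right endpoints'' of the caps on $D_1(T)$; since $T$ has no cups, no non-contractible loops, and the strings are non-crossing, the caps form a non-crossing partial matching on the internal boundary points whose outermost-to-innermost nesting is completely determined by which points are cap-endpoints (each $\ind(\Lambda)$ together with the nesting data of $B(\Lambda)$). I would make precise that reading the internal points clockwise from $*_1(T)$, a point is a left endpoint of a cap exactly when it is immediately followed — in the induced bracketing — by a matching right endpoint, so $\capind(T)$ recovers the full matching. The internal points not covered by caps are exactly the ones carrying through strings, and since there are no cups on $D_0(T)$ and the diagram is planar, these through strings must connect to the external points in the unique cyclic-order-preserving fashion; the Type-I choice (I-1)/(I-2) of $*_0(T)$ then fixes the external labelling relative to the internal one. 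Hence $T$ is determined by $\capind(T)$. For the ``moreover'': a Type-I tangle by definition has no non-contractible loops, so $\rel_*(T)$ falls under case (2) with $k=0$ (giving $+(0)$ or $-(0)$) when $T$ has no through string, or under case (3) when it does; in the latter case $\#\ts_1(T)=|\ts(T)|$ by the very construction used to locate $*_0(T)$ via $*_1(T)$, so $\lfloor \#\ts_1(T)/2\rfloor = |\ts(T)|/2 \equiv 0 \pmod{|\ts(T)|/2}$, i.e. $\rel_*(T)=0$.

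For Type II, $T$ has no caps and no cups, so every string is either a through string or a non-contractible loop. If there are through strings, $T$ is a power of the rotation $t$, and the map $t^j\mapsto \rel_*(t^j)=j \bmod (|\ts(T)|/2)$ is a bijection onto $\{0,\dots,|\ts(T)|/2-1\}$ directly from definition (3); if there are none, $T$ is an annular $(0,0)$-tangle with $k$ non-contractible loops and one region $R$ meeting $D_1(T)$, and $\rel_*(T)\in\{+(k),-(k)\}$ records both $k$ and the shading of $R$, which is exactly the data needed to reconstruct $T$ (the $k$ loops can be isotoped into concentric position uniquely). Type III is dual: apply the involution $*$, use Lemma \ref{typeinvolution}(1) to move to $\T_I(n,m)$, the remark $\capind(T)=\cupind(T^*)$ (reversed), and the already-established Type-I case, then transport back — since $*$ is a bijection on tangles, uniqueness for $T^*$ gives uniqueness for $T$, and $\rel_*(T^*)\in\{0,+(0),-(0)\}$ transfers to $\rel_*(T)$ because $\rel_*$ is essentially symmetric under the reflection (both read the relative displacement of the two stars along the through strings, or the shading of the distinguished region).

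The main obstacle I expect is step (i)–(ii) of the Type-I case made rigorous: precisely showing that the bare set $\capind(T)\subseteq\{1,\dots,2m\}$ (an increasing sequence of integers) reconstructs the non-crossing matching of caps and hence the full diagram, rather than just the multiset of cap sizes. This requires a careful induction on nesting depth — peel off the innermost cap (a pair of adjacent points, the larger being in $\capind(T)$ with no smaller cap-index strictly between the two), remove it, and recurse — together with the bookkeeping that the through strings and the placement of $*_0(T)$ are then forced. The Type II case is essentially immediate from the definitions, and Type III is formal once Type I is done, so the write-up weight is almost entirely on this planarity/nesting argument for Type I.
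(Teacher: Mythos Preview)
Your approach matches the paper's almost exactly: for Type I you reconstruct the non-crossing cap matching from $\capind(T)$ (the paper phrases this as ``same index $\Rightarrow$ same $|B(\Lambda)|$ $\Rightarrow$ same endpoints''), then note that the through strings and $*_0(T)$ are forced; Type II is the same case split, and Type III is the same duality via Lemma \ref{typeinvolution}.

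One imprecision to fix: your claim that condition (I-2) forces $\#\ts_1(T)=|\ts(T)|$ is not literally true---for instance in $a_{2n}$ the first through string counterclockwise from $*_1$ lands at external point $1$, so $\#\ts_1(T)=1$. What (I-2) actually guarantees is $\#\ts_1(T)\in\{1,\,|\ts(T)|\}$ (depending on which side of the external endpoint $p$ is unshaded), and since $\lfloor 1/2\rfloor=0$ and $\lfloor |\ts(T)|/2\rfloor\equiv 0\pmod{|\ts(T)|/2}$, both cases yield $\rel_*(T)=0$; with that correction your argument goes through.
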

\begin{proof}
\itt{Type I} Suppose $T_1,T_2\in\T_1(m,n)$ with $\capind(T_1)=\capind(T_2)$. If $\Lambda_i\in\caps(T_i)$ for $i=1,2$, note that $|B(\Lambda_1)|=|B(\Lambda_2)|$, so the $\Lambda_i$'s must end at the same points. Hence all caps of $T_i$ start and end at the same points for $i=1,2$. Now note that all other points on $D_1(T_i)$ for $i=1,2$ (if there are any) are connected to through strings, and recall $*_0(T_i)$ is uniquely determined by $*_1(T_i)$ for $i=1,2$. Hence $T_1=T_2$. The statement about $\rel_*(T)$ follows immediately from conditions (I-1) and (I-2).
\itt{Type II} Note that exactly one of the following occurs:
\be
\item[(1)] $m=n$ and $T=\id_{[n]}$, in which case $\rel_*(T)\in\{0,+(0),-(0)\}$,
\item[(2)] $m=n$ and $T=t^k$ where $0<k<n$, in which case $\rel_*(T)=k$,
\item[(3)] $m=n=0\pm$ and $T=(s_\mp s_\pm)^k$ for some $k\in\N$, in which case $\rel_*(T)=\pm(2k)$, or
\item[(4)] $m=0\pm$ and $n=0\mp$ and $T=(s_\pm s_\mp)^k s_\pm$ for some $k\in\Z_{\geq 0}$, in which case $\rel_*(T)=\pm(2k+1)$.
\ee
\itt{Type III} This follows immediately from the Type I case and Lemma \ref{typeinvolution}.
\end{proof}

\begin{lem}\label{compose}
Tangle type is preserved under tangle composition for tangles.
\end{lem}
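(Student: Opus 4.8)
The plan is to treat the three types separately. Type~III will be reduced to Type~I using the involution of Lemma~\ref{typeinvolution} together with $(S\circ T)^{*}=T^{*}\circ S^{*}$, and Types~I and~II will be handled directly, using the classifications already recorded in Proposition~\ref{capindex}. Throughout, $S$ and $T$ denote composable tangles of the type under consideration and $R:=S\circ T$. For Type~II one first recalls, from the proof of Proposition~\ref{capindex}, that a Type~II tangle has no cups and no caps and is $\id_{[n]}$, a proper power $t^{k}$ of the rotation on some $[n]$ with $n\ge 1$, a power $(s_{\mp}s_{\pm})^{k}$ on $[0\pm]$, or $(s_{\pm}s_{\mp})^{k}s_{\pm}\colon[0\pm]\to[0\mp]$. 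Since neither factor has a cup or a cap, the only strings of $S$ or $T$ meeting the gluing circle are through strings, and grafting through strings onto through strings yields through strings; hence $R$ again has no cups and no caps, possibly acquiring extra non-contractible loops, so $R$ is Type~II. (One can pin down which element of the list $R$ is via $t^{n}=\id_{[n]}$ and relation~(6) of Theorem~\ref{Atlrelations}, but this is not needed.)

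For Type~I, if $S$ or $T$ is an identity tangle then $R$ equals the other factor and we are done, so assume $S\colon[m]\to[n]$ and $T\colon[l]\to[m]$ are both non-identity Type~I; then each has no cups, at least one cap, and no non-contractible loops, and $R\colon[l]\to[n]$ has internal boundary $D_{1}(T)$ and external boundary $D_{0}(S)$. The key structural point is that \emph{every marked point of the gluing circle $C=D_{0}(T)=D_{1}(S)$ lies on a through string of $T$}, because $T$ has neither cups nor loops. Hence each string of $R$ meeting $C$ is formed by grafting a through string of $T$ onto each $C$-endpoint of a single string of $S$: if that string of $S$ is a cap we obtain a string of $R$ joining two distinct points of $D_{1}(T)$, i.e.\ a cap of $R$; if it is a through string of $S$ we obtain a through string of $R$. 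A string of $R$ missing $C$ would have to be a loop of $T$ or of $S$, of which there are none. Thus $R$ has no cups, no loops, and --- since $S$ has at least one cap --- at least one cap. There remains only the limitation on $*_{0}(R)=*_{0}(S)$: if $n\in\{0+,0-\}$ then $D_{0}(R)$ has no marked points, so $R$ has no through strings and (I-1) holds, with $*_{0}(R)$ meeting an unshaded region because $S$ is Type~I; if $n\ge 1$ then all $2n$ points of $D_{0}(R)$ lie on through strings of $R$ and (I-2) must be verified.

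For the (I-2) check, one traces counterclockwise from $*_{1}(R)=*_{1}(T)$ to the first through string of $R$ and outward to its endpoint $p\in D_{0}(S)$, and must confirm that $p$ is the point to which (I-2) for the Type~I tangle $S$ attaches $*_{0}(S)$. Here one uses that forming $R$ identifies $*_{0}(T)$ with $*_{1}(S)$, and that $T$ is Type~I, so the through string of $R$ reached first crosses $C$ precisely at the point adjacent to $*_{0}(T)=*_{1}(S)$ prescribed by (I-2) for $T$; following it into $S$, it becomes the first through string of $S$ encountered counterclockwise from $*_{1}(S)$, namely $\ts_{1}(S)$, whose outer endpoint is exactly where (I-2) for $S$ places $*_{0}(S)$. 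This identifies $p$ and shows $R\in\T_{I}$. Finally, for Type~III: if $S,T\in\T_{III}$ are composable then $S^{*},T^{*}\in\T_{I}$ by Lemma~\ref{typeinvolution}(1), so $(S\circ T)^{*}=T^{*}\circ S^{*}\in\T_{I}$ by the previous case, whence $S\circ T\in\T_{III}$ by Lemma~\ref{typeinvolution}(1) again.

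The step I expect to be the real work is the (I-2) verification in the Type~I case: the absence of cups and loops and the presence of a cap follow immediately from the structure of the gluing, but matching the counterclockwise-first through string of $R$ with $\ts_{1}(T)$ and $\ts_{1}(S)$ across $C$ --- and keeping straight how the constraint $\rel_{*}=0$ on each Type~I factor pins down the relative positions of $*_{0}(T)=*_{1}(S)$ --- requires care, and is most transparently settled by drawing the picture, in keeping with the style of the preceding proofs.
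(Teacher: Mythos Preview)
Your proposal is correct and follows essentially the same route as the paper: treat Type~II as immediate, argue Type~I directly by checking no cups/no loops and then verifying the $*_0$ condition (I-2), and deduce Type~III from Type~I via the involution and Lemma~\ref{typeinvolution}. You supply considerably more detail than the paper's terse proof (which for Type~I says only ``Certainly $R$ has no cups or loops\ldots\ we see that if $S$ and $T$ both satisfy condition (I-2), then so does $R$''), but you have correctly located the (I-2) verification as the only nontrivial step and your observation that every marked point of the gluing circle lies on a through string of $T$ is exactly the structural fact that makes the argument go through.
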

\begin{proof}
\itt{Type I} Suppose $S,T\in\T_I$ such that $R=S\circ T$ makes sense. Certainly $R$ has no cups or loops. It remains to verify that $*_0(R)$ is in the right place. A problem could only arise in the case where both $S$ and $T$ have through strings, but we see that if $S$ and $T$ both satisfy condition (I-2), then so does $R$.
\itt{Type II} Obvious.
\itt{Type III} Suppose $S,T\in\T_{III}$ such that $R=S\circ T$ makes sense. Then by Lemma \ref{typeinvolution}, we have $T^*,S^*\in\T_{I}$ and $R^*=T^*\circ S^*$ makes sense, so by the Type I case, $R^*\in\T_I$, and once more by \ref{typeinvolution}, $R\in\T_{III}$.
\end{proof}

\begin{cor}\label{typecor} By  \ref{typerem} and Proposition \ref{compose},
\item[(1)] any composite of $a_i$'s is in $\T_I$, and
\item[(2)] any composite of $b_i$'s is in $\T_{III}$.
\end{cor}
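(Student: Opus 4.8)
The plan is a straightforward induction on the number of factors, using Remark \ref{typerem} for the base cases and Lemma \ref{compose} for the inductive step.

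For part (1), suppose $a_{i_1},\dots,a_{i_k}$ is a composable string of the tangles from item (A) and put $R=a_{i_1}\circ\cdots\circ a_{i_k}$. When $k=0$ the composite is an identity tangle, which lies in $\T_I$ directly by Definition \ref{types}; when $k=1$ we have $R=a_{i_1}\in\T_I$ by Remark \ref{typerem}(1). For $k\geq 2$ I would write $R=a_{i_1}\circ\left(a_{i_2}\circ\cdots\circ a_{i_k}\right)$, observe that the inner composite lies in $\T_I$ by the inductive hypothesis while $a_{i_1}\in\T_I$, and then conclude $R\in\T_I$ from Lemma \ref{compose}. Composability of the original string guarantees that every intermediate composite is a genuine $\Atl$-morphism, so Lemma \ref{compose} applies at each stage. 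Part (2) is the same argument verbatim, with the $a$'s replaced by the $b$'s of item (B), with $\T_{III}$ in place of $\T_I$, and with Remark \ref{typerem}(2) in place of Remark \ref{typerem}(1).

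There is no real obstacle here, since both ingredients are already in hand; the only point worth keeping in mind is that composing $a_i$'s (resp.\ $b_i$'s) never produces contractible loops, so nothing is lost in passing from annular tangles to morphisms of $\Atl$ — and this is already incorporated into the statement of Lemma \ref{compose}.
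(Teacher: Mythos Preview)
Your proof is correct and is exactly the argument the paper has in mind: the paper does not spell out a proof at all but simply cites Remark~\ref{typerem} and Lemma~\ref{compose} in the statement itself, leaving the obvious induction on the number of factors to the reader. Your write-up makes that induction explicit and is entirely in line with the intended approach.
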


%%%%%%%%%%%%%%%%%%%%%%%%%%%%%%%%%%%%%%%%%%%%%%%%%%%
\subsection{Unique Tangle Decompositions}
For this section, we use the convention that if $n=0\pm$ and $z\in\Z$, then $n+z=z$.

\begin{defn}
A tangle $T\in\T_{I}$ is called irreducible if there is at most one cap bounding $*_1(T)$, and if there is a cap $\Lambda$ bounding $*_1(T)$, then all other caps of $T$ are bounded by $\Lambda$.
\end{defn}

\begin{rem}\label{adecomp}
If $T\in\T_{I}(m,n)$ for $m\geq 1$ is irreducible, then $T$ has a unique representation as follows:
\itt{Case 1} if there is no cap bounding $*_1(T)$, then $T=a_{i_k}\cdots a_{i_1}$ with $i_j>i_{j+1}$ for all $j\in\{1,\dots,k-1\}$ and $i_j<2(m-j)+2$ for all $j\in\{1,\dots,k\}$.
\itt{Case 2} If there is a cap bounding $*_1(T)$, then $T=a_qa_{i_k}\cdots a_{i_1}a_{j_l}\cdots a_{j_1}$ where $k,l\geq 0$ and
\be
\item[(i)] $q=2n+2$,
\item[(ii)] $i_r>i_{r+1}$ for all $r\in\{1,\dots,k-1\}$, $i_1<j_l$, and $j_s>j_{s+1}$ for all $s\in\{1,\dots,l-1\}$, and
\item[(iii)] $i_r\leq 2(k-r)+1$ for all $r\in\{1,\dots,k\}$ and $j_s\geq 2(m-s)+1$ for all $s\in \{1,\dots,l\}$.
\ee
Uniqueness follows by looking at the cap indices which are given as follows:
\itt{Case 1} If there is no cap bounding $*_1(T)$, then $\capind(T)=\{i_{k},\cdots, i_1\}$.
\itt{Case 2} If there is a cap $\Lambda$ bounding $*_1(T)$, then $\ind(\Lambda)=2(m-l)$ and $\capind(T)=\{i_{k},\cdots, i_1,2(m-l),j_l,\cdots,j_1\}$.
\end{rem}

\begin{rems}
Suppose $T\in \T_{I}(m,n-1)$ with $m>n-1\geq 1$ is irreducible such that $*_1(T)$ is bounded. Let
$T=a_qa_{i_k}\cdots a_{i_1}a_{j_l}\cdots a_{j_1}$ be the representation afforded by the above remark. If $S\in\T_I(n-1,p)$ and $R=S\circ T$, then
\item[(1)] there is a cap $\Lambda$ of $R$ bounding $*_1(R)$, of index $2(m-l)$. All other caps of $R$ bounding $*_1(R)$ have smaller index than $\Lambda$.
\item[(2)] $|B(\Lambda)|=k+l+1$.
\item[(3)] $\capind(R)=\{i_{k},\cdots,i_1,c_1,\cdots,c_s,2(m-l),j_l,\dots,j_1\}$ for some $c_1,\dots,c_s\in\N$ and $s=m-p-k-l-1$. 
\end{rems}

\begin{figure}[!ht]
\begin{tikzpicture}
%CIRCLES AND OUTER STAR
	\clip (1,11) rectangle (-5,13.5);
	\draw[shaded] (0,0) circle (15cm);	
	\node at (0,0)  [empty box] (T) {};
	\node at (135:1.80cm) [above] {$*$};
%     SHADED CUPS:  starting                          starting                      opposite end       ending
%	                               degrees                         degrees                    degrees               degrees              
	\filldraw[unshaded] (90:12cm) .. controls ++(90:2cm) and ++(110:2cm) .. (110:12cm);
	\filldraw[shaded] (108:12cm) .. controls ++(108:8mm) and ++(104:8mm) .. (104:12cm);
	\filldraw[shaded] (102:12cm) .. controls ++(102:1cm) and ++(92:1cm) .. (92:12cm);	\filldraw[unshaded] (100:12cm) .. controls ++(100:5mm) and ++(98:5mm) .. (98:12cm);
	\filldraw[unshaded] (96:12cm) .. controls ++(96:5mm) and ++(94:5mm) .. (94:12cm);
	
	\node at (103:12cm) [above] {$*$};
	\draw[ultra thick] (0,0) circle (12cm);	
	\draw[unshaded] (0,0) circle (12cm);	
\end{tikzpicture}\\
\caption{$R=S\circ T$, zoomed in near $*_1(R)$ where $T=a_{2n+2}a_1a_2a_4a_{2m-1}\in\T_{I}(m,n)$ is irreducible}\label{irredfigure}
\end{figure}
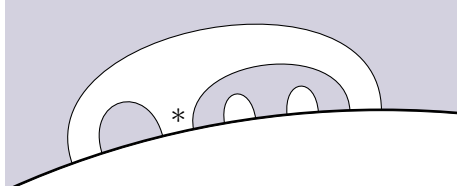

\begin{lem}\label{irred}
Suppose $T_1\in\T_I(m,m-u-1)$ and $T_2\in\T_I(m,m-v-1)$ with $m-u,m-v\geq 2$ are irreducible and each has one cap bounding $*_1$. Suppose $S_1\in\T_I(m-u-1,w)$ and $S_2\in \T_I(m-v-1,w)$ such that $S_1\circ T_1=S_2\circ T_2$. Then $T_1=T_2$.
\end{lem}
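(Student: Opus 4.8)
The strategy is to extract, from the hypothesis $S_1\circ T_1 = S_2\circ T_2$, enough information about the cap structure of the common composite $R := S_1\circ T_1 = S_2\circ T_2$ to force $T_1$ and $T_2$ to have the same representation in the standard form of Remark~\ref{adecomp}, Case~2; uniqueness of that representation (already established) then gives $T_1 = T_2$. Write $T_1 = a_qa_{i_k}\cdots a_{i_1}a_{j_l}\cdots a_{j_1}$ with $q = 2(m-u-1)+2 = 2(m-u)$, and similarly $T_2 = a_{q'}a_{i'_{k'}}\cdots a_{i'_1}a_{j'_{l'}}\cdots a_{j'_1}$ with $q' = 2(m-v)$. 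Since $R = S_1\circ T_1$ with $T_1$ irreducible and $*_1(T_1)$ bounded, the Remarks following~\ref{adecomp} apply: there is a distinguished cap $\Lambda_1$ of $R$ bounding $*_1(R)$ of index $2((m)-l)$ — here I must track the indexing carefully, writing the relevant bounding cap of $R$ coming from $T_1$ as having index determined by $m$ and $l$ — with $|B(\Lambda_1)| = k+l+1$, and all other caps of $R$ bounding $*_1(R)$ have strictly smaller index; and $\capind(R) = \{i_k,\dots,i_1,c_1,\dots,c_s,\ind(\Lambda_1),j_l,\dots,j_1\}$ for the appropriate $s$. The same Remarks applied to $R = S_2\circ T_2$ give a distinguished cap $\Lambda_2$ of $R$ with the analogous properties in terms of $k',l'$.

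The first key step is to argue $\Lambda_1 = \Lambda_2$ as caps of $R$. Both are characterized intrinsically in $R$: each is the \emph{largest-index} cap among those bounding $*_1(R)$. Since this characterization does not refer to the decomposition, $\Lambda_1$ and $\Lambda_2$ must be the same cap of $R$; in particular $\ind(\Lambda_1) = \ind(\Lambda_2)$ and $B(\Lambda_1) = B(\Lambda_2)$, so $k+l+1 = k'+l'+1$. The second step is to pin down $l$ and $l'$ individually. From $\ind(\Lambda_1) = \ind(\Lambda_2)$ together with the formulas $\ind(\Lambda_1)$ is a function of $m$ and $l$ only (namely of the form $2(m-l)$, with the same $m$ for both since $T_1,T_2\in\T_I(m,-)$), we get $l = l'$, hence also $k = k'$, and then $q = q'$ so $u = v$. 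The third step is to recover the actual index sequences: the caps in $B(\Lambda_1)$ are exactly the caps of $R$ strictly bounded by the distinguished cap, and among them the ones \emph{inherited from} $T_1$ are distinguished from the $c_t$'s by a position/ordering argument — the $i$'s sit on one side of $\Lambda_1$ along $D_1(R)$ and the $j$'s on the other, in the pattern forced by condition (ii)–(iii) of Remark~\ref{adecomp}, and the $c_t$'s are exactly those introduced by $S_1$ which, being a Type~I tangle composed \emph{after} $T_1$, can only contribute caps whose indices interleave in a controlled way. Matching $\capind(R)$ as computed from the $T_1$-side against the $T_2$-side, and using that $R$ is fixed, forces $\{i_k,\dots,i_1\} = \{i'_{k'},\dots,i'_1\}$ and $\{j_l,\dots,j_1\} = \{j'_{l'},\dots,j'_1\}$; by the strict-monotonicity in Remark~\ref{adecomp}(ii) these multisets determine the sequences, so the two standard forms coincide and $T_1 = T_2$.

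The main obstacle I anticipate is the third step: cleanly separating, inside $\capind(R)$, the contribution genuinely coming from $T_1$ (the $i$'s and $j$'s) from the ``noise'' $c_1,\dots,c_s$ contributed by $S_1$, and doing the same on the $T_2$-side, and then checking these two decompositions of the \emph{same} sequence $\capind(R)$ agree term-for-term. The cleanest route is probably to phrase everything in terms of the cap $\Lambda := \Lambda_1 = \Lambda_2$ and its bounded set $B(\Lambda)$: the caps of $R$ \emph{not} bounded by $\Lambda$ and not equal to $\Lambda$ have a position along $D_1(R)$ that is determined by $T_1$ (the outer $i$'s and $j$'s), while the caps strictly inside $\Lambda$ are exactly $B(\Lambda)$, whose size we have already matched; a careful look at how $S_1$ acts — it cannot create caps bounding $*_1(R)$ other than by nesting strictly inside something already present, and it fixes the cyclic order of the external boundary data it receives — shows the $c_t$'s must lie strictly between $\ind(\Lambda)$ and certain of the $j$'s in the order, forcing the outer blocks $\{i_\bullet\}$ and $\{j_\bullet\}$ to be read off $R$ in a way independent of which of $T_1,T_2$ we used. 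Once that bookkeeping is set up, monotonicity finishes it. Everything else (the existence and intrinsic characterization of $\Lambda$, the cardinality of $B(\Lambda)$, the shape of $\capind(R)$) is quoted directly from the Remarks preceding this lemma, so the proof should be short modulo this combinatorial identification.
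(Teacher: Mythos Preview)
Your overall strategy matches the paper's: identify the distinguished cap $\Lambda$ of $R$ intrinsically, recover $\capind(T_1)$ and $\capind(T_2)$ from it, and conclude via Proposition~\ref{capindex}. Your first two steps are correct, and your formula $\ind(\Lambda)=2(m-l)$ agrees with the Remarks preceding the lemma (the paper's own proof writes $2(m-u)$ here, which appears to be a slip).

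The third step, however, has the geometry reversed. You write that the caps of $R$ \emph{not} bounded by $\Lambda$ are ``the outer $i$'s and $j$'s,'' and that ``the $c_t$'s must lie strictly between $\ind(\Lambda)$ and certain of the $j$'s.'' Both are backwards. By the definition of irreducibility, every cap of $T_1$ is bounded by $\Lambda$, so $\caps(T_1)\subseteq B(\Lambda)$ as subsets of $\caps(R)$; since $|\caps(T_1)|=k+l+1=|B(\Lambda)|$ (the latter is item~(2) of the Remarks you already quoted), in fact $\caps(T_1)=B(\Lambda)$. Thus the $i$'s, the $j$'s, and $\Lambda$ itself are exactly the caps \emph{in} $B(\Lambda)$; the $c$'s are the caps of $R$ \emph{not} in $B(\Lambda)$, and in the increasing listing of $\capind(R)$ they sit between the $i$'s and $\ind(\Lambda)$, not between $\ind(\Lambda)$ and the $j$'s.

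Once you make this correction, the positional bookkeeping you anticipated evaporates: $\capind(T_1)=\{\ind(\Lambda'):\Lambda'\in B(\Lambda)\}=\capind(T_2)$, since $B(\Lambda)$ depends only on $R$, and Proposition~\ref{capindex} finishes. This is exactly the paper's argument once the notation is straightened out.
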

\begin{proof}
Set $R=S_1\circ T_1=S_0\circ T_0$. We have that $*_1(R)$ is bounded by a cap $\Lambda$ with index $2(m-u)=2(m-v)$, so $u=v$. Now we have unique irreducible decompositions
\begin{align*}
T_1&=a_pa_{i_k}\cdots a_{i_1}a_{j_l}\cdots a_{j_1}\hsp{and}\\
T_2&=a_qa_{g_r}\cdots a_{g_1}a_{h_s}\cdots a_{h_1},
\end{align*}
and as the cap indices of $R$ are unique, we have
\begin{align*}
\capind(R)&=\{i_{k},\cdots,i_1,c_1,\cdots,c_s,2(m-u),j_l\cdots, j_1\}\\
&=\{g_r,\cdots,g_1,c_1,\dots,c_s,2(m-v),h_s,\cdots,h_1\}.\end{align*}
Hence we must have equality of the two sequences:
$$\{i_{k},\cdots,i_1,2(m-u),j_l\cdots, j_1\}=\{g_r,\cdots,g_1,2(m-v),h_s,\cdots,h_1\},$$
and $T_1=T_2$ by Proposition \ref{capindex}.
\end{proof}

\begin{prop}\label{a}
Each $T\in\T_I(m,n)$ where $m\in\N$ and $n\in\N\cup\{0\pm\}$ has a unique decomposition $T=W_r\cdots W_1$ such that $W_i$ is irreducible for all $i=1,\dots,r$.
\end{prop}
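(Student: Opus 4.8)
The plan is to prove existence and uniqueness separately, each by induction on the number of caps of $T$, which by the convention of this section equals $m-n$ (with $n=0\pm$ counting as $n=0$). When $T$ has no caps it is $\id_{[n]}$ by the definition of Type I, which is (vacuously) irreducible, so there is nothing to prove; thus assume $T$ has at least one cap. For the inductive steps it is convenient to reformulate the statement as: $T$ determines a unique pair $(W_1,T_1)$ with $W_1\in\T_I$ irreducible (and nontrivial), $T_1\in\T_I$ having strictly fewer caps, and $T=T_1\circ W_1$. Granting such a ``bottom irreducible factor'' exists and is forced by $T$, the proposition follows by peeling off $W_1$ and applying the induction hypothesis to $T_1$.

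For existence I would first record that every $T\in\T_I(m,n)$ with $m\geq 1$ is a composite of the tangles $a_i$: an innermost cap of $T$ (one bounding no other cap) is necessarily a simple cap joining two adjacent internal points, and contracting it exhibits $T=T'\circ a_i$ for the appropriate index $i$ and some $T'\in\T_I(m-1,n)$ --- that $T'$ is again Type I, with $*_0$ correctly placed, uses that the connection pattern defining $a_i$ is tailored to exactly this operation, together with Lemma \ref{compose}/Corollary \ref{typecor}; then induct on the number of caps. Writing $T=a_{c_p}\cdots a_{c_1}$, I then bracket greedily: since each single $a_c$ is irreducible, there is a largest $j$ with $a_{c_j}\cdots a_{c_1}$ irreducible; set $W_1=a_{c_j}\cdots a_{c_1}$ and apply the induction hypothesis (on the number of caps, equivalently on $p-j$) to the Type I tangle $a_{c_p}\cdots a_{c_{j+1}}$. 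This produces a decomposition into irreducibles.

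For uniqueness the point is to show the bottom factor is intrinsic to $T$. Given two irreducible decompositions $T=W_r\cdots W_1=W'_{r'}\cdots W'_1$, the tangles $W_1$ and $W'_1$ share the domain $[m]$ and satisfy $*_1(W_1)=*_1(W'_1)=*_1(T)$, and a cap of $W_1$ (resp.\ $W'_1$) is a cap of $T$ with the same interval $\partial$. I would use this, together with whether $*_1(T)$ is bounded by a cap of $T$ and a count of the caps of $T$ lying adjacent to $D_1(T)$ on either side of $*_1(T)$, to show $W_1$ and $W'_1$ are of the same normal form from Remark \ref{adecomp}. In the case where each has a unique cap bounding $*_1$, the equality $W_1=W'_1$ is exactly Lemma \ref{irred} (with $S_1=W_r\cdots W_2$, $S_2=W'_{r'}\cdots W'_2$). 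In the remaining case, $W_1=a_{i_k}\cdots a_{i_1}$ and $W'_1=a_{g_l}\cdots a_{g_1}$ are products of non-nested simple caps bounding no $*_1$; here I would identify their common cap-index set inside $T$ using the formulas of Remark \ref{adecomp} and the Remarks following it, and conclude $W_1=W'_1$ by Proposition \ref{capindex}. Once $W_1=W'_1$, reading off $T_1$ as the tangle obtained from $T$ by contracting the caps of $W_1$ shows the complementary factor is also determined, and the induction closes.

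The main obstacle is the last bookkeeping step: pinning down, intrinsically in $T$, precisely which caps and cap indices belong to the bottom irreducible factor, and ruling out confusion between the two irreducibility normal forms (Case 1 versus Case 2 of Remark \ref{adecomp}). Everything else is a routine induction once this identification is in hand.
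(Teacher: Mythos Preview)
Your approach differs from the paper's in both halves. For existence the paper does not write $T$ as a word in the $a_i$ and then bracket; it invokes Algorithm~\ref{readtangle}, which reads the caps of $T$ directly (peeling off, at each stage, the outermost even-index cap bounding $*_1$ together with everything it bounds) and outputs a decomposition in which every $W_i$ with $i<r$ has a cap bounding $*_1$. For uniqueness the paper inducts on the number $r$ of factors, applies Lemma~\ref{irred} to conclude $W_1=U_1$ with no case split, and then cancels this common bottom factor by composing on the right with suitable $b_i$'s (rather than by ``contracting caps''). So the two arguments are organized rather differently.

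Your instinct that the Case~1/Case~2 split is the real issue, and that the Case~1 bookkeeping is the obstacle, is exactly right---in fact it points to a genuine gap that the paper's proof glosses over. Consider $T=a_4\circ a_1\in\T_I(3,1)$. One checks that $T$ has two caps, one bounding $*_1(T)$ and the other bounded by it, so $T$ itself is irreducible and $r=1$, $W_1=T$ is one decomposition. But $a_1\in\T_I(3,2)$ has a single cap not bounding $*_1$, and $a_4\in\T_I(2,1)$ has a single cap bounding $*_1$, so each is irreducible and $r=2$, $W_1=a_1$, $W_2=a_4$ is a second decomposition. Thus the statement is not literally true as written; the paper's direct appeal to Lemma~\ref{irred} silently assumes both bottom factors are of Case~2 type. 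The fix implicit in Algorithm~\ref{readtangle} is to add the hypothesis that $W_1,\dots,W_{r-1}$ each have a cap bounding $*_1$. Under that convention Lemma~\ref{irred} applies to $W_1$ and $U_1$ as the paper intends, your ``remaining case'' never arises for the bottom factor when $r>1$, and both your argument and the paper's go through.
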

\begin{proof}
\itt{Existence} 
The existence of such a decomposition will follow from Algorithm \ref{readtangle} below.
\itt{Uniqueness}
We induct on $r$. Suppose $r=1$. Then uniqueness follows from Remark \ref{adecomp}. Suppose now that $r>1$ and the result holds for all concatenations of fewer irreducible words. Suppose we have another decomposition
$$
T=W_r\cdots W_1=U_s\cdots U_1.
$$
Then by the induction hypothesis, we must have $s\geq r$. As $W_1$ and $U_1$ are irreducible, we apply Lemma \ref{irred} with
\be
\item[(1)] $T_1=W_1$ and $S_1=W_r\cdots W_2$, and
\item[(2)] $T_2=U_1$ and $S_2=U_s\cdots U_2$
\ee
to see that $W_1=U_1$. We may now apply appropriate $b_i$'s to $T$ (on the right) to get rid of $W_1=U_1$ to get
$$
W'=W_r\cdots W_2=U_s\cdots U_2.
$$ 
where $W'$ is equal to a concatenation of fewer irreducible words. By the induction hypothesis, we can conclude $r=s$ and $U_i=W_i$ for all $i=2,\dots, r$. We are finished.
\end{proof}

\begin{alg}\label{readtangle}
The following algorithm expresses a Type I tangle $T\in\T_I(m,n)$ where $m\in\N$ and $n\in\N\cup\{0\pm\}$  as a composite of $a_i$'s in the form required by Proposition \ref{a}. Set $T_0=T$, $m_0=m$, and $r=1$.
\be
\itt{Step 1} Let $S_1=\set{\Lambda\in\caps(T_0)}{*_1(T_0)\subset \partial \Lambda\hsp{and}\ind(\Lambda)\in2\N}$. Let $S_0$ be the set of all caps that are not in $B(\Lambda)$ for some $\Lambda\in S_1$. If $S_1=\emptyset$, proceed to Step 4.
\itt{Step 2} Suppose $|S_1|\geq 1$. Select the cap $\Lambda\in S_1$ with the largest index. There are two cases:
\be
\itt{Case 1} $B(\Lambda)=\{\Lambda\}$. Set $W_r=\alpha_{\ind(\Lambda)}$. Proceed to Step $3$.
\itt{Case 2} $B(\Lambda)\setminus\{\Lambda\}\neq \emptyset$. List the cap indices for all caps $\Lambda'\in B(\Lambda)\setminus\{\Lambda\}$ in decreasing order from right to left,  $i_k,\cdots,i_1$ where $i_j> i_{j+1}$ for all $j\in\{1,\dots,k-1\}$. where $k=|B(\Lambda)\setminus\{\Lambda\}|$. Set $q=\ind(\Lambda)-2k$ and $W_r=a_qa_{i_k}\cdots a_{i_1}$.
\ee
\itt{Step 3} Note that $W_r$ is irreducible. Move $*_1(T_0)$ counterclockwise to the closest simple interval outside of $\Lambda$ whose interior touches an unshaded region (which is necessarily $2$ regions counterclockwise), and remove all caps in $B(\Lambda)$ from $T_0$ to get a new tangle, called $T_1$.  Note that $T_0=T_1W_r$. Set $m_1$ equal to half the number of internal boundary points of $T_1$, and set $r_1=r$. Now set $T_0=T_1$, $m_0=m_1$, and $r=r_1+1$. Go back to Step 1.
\itt{Step 4} List the cap indices for all caps $\Lambda\in S_0$ in decreasing order from right to left,  $i_k,\cdots,i_1$ where $i_j> i_{j+1}$ for all $j\in\{1,\dots,k-1\}$.  There are two cases:
\be
\item[(i)] There are fewer than $m_0$ caps. Set $W_r=a_{i_k}\cdots a_{i_1}$. Note that $W_r$ is irreducible and $T_0=W_r$. We are finished.
\item[(ii)] There are $m_0$ caps. Proceed to Step $5$.
\ee
 \itt{Step 5}
There are two cases:
\be
\item[(i)]
If the region touching $D_0(T_0)$ is unshaded, set $W_r=a_1a_{i_{k-1}}\cdots a_{i_1}$. Note that $W_r$ is irreducible and $T_0=W_r$. We are finished.
\item[(ii)] 
If the region touching $D_0(T_0)$ is shaded, set $W_r=a_2a_{i_{k-1}}\cdots a_{i_1}$. Note that $W_r$ is irreducible and $T_0=W_r$. We are finished.
\ee
\ee
Note that $T=W_r\cdots W_1$ satisfies the conditions of Proposition \ref{a}.
\end{alg}

The following Theorem is merely a strengthening of Corollary 1.16 in \cite{MR1309131}.
\begin{thm}[Atl Tangle Decomposition]\label{decomposition}
Each Atl $(m,n)$-tangle $T$ can be written uniquely as a composite $T=T_{III}\circ T_{II}\circ T_I$ where $T_i\in\T_i$ for all $i\in\{I,II,III\}$.
\end{thm}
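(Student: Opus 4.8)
The plan is to prove existence by cutting $T$ into three concentric annular pieces, and uniqueness by showing that the invariants $\capind$, $\cupind$ and $\rel_*$ of those pieces can be read off $T$ alone and then invoking Proposition \ref{capindex}.

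\emph{Existence.} Let $2p=|\ts(T)|$. First I would isotope $T$ within its diffeomorphism class so that every cap lies in the annulus between $D_1(T)$ and a circle $C_1$ of radius $1/2$, every cup lies between a circle $C_0$ of radius $3/4$ and $D_0(T)$, and in the annulus between $C_1$ and $C_0$ only through strings and non-contractible loops occur, the through strings there winding a fixed number of times (or, when $p=0$, the $k$ non-contractible loops being concentric). This is possible because no through-string foot lies ``behind'' a cap and because a tangle having a through string has no non-contractible loop. Putting $2p$ marked points on $C_1$ and on $C_0$ at the feet of the through strings and cutting along $C_1,C_0$ produces annular tangles $T_I$ (inside $C_1$), $T_{II}$ (the middle annulus), $T_{III}$ (outside $C_0$). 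I would fix the distinguished data by: $*_1(T_I)=*_1(T)$; if $p\ge 1$, $*_0(T_I)$ is the unshaded simple interval on $C_1$ prescribed by rule (I-2) of Definition \ref{types}, while if $p=0$ the target of $T_I$ is declared $[0+]$ or $[0-]$ according to whether the region of $T_I$ meeting $C_1$ is unshaded or shaded (rule (I-1)); dually $*_0(T_{III})=*_0(T)$ and $*_1(T_{III})$ via (III-2)/(III-1); the distinguished data of $T_{II}$ on $C_1,C_0$ is whatever $T_I,T_{III}$ induce; and if $m\in\{0\pm\}$ (resp.\ $n\in\{0\pm\}$) one simply sets $T_I=\id_{[m]}$ (resp.\ $T_{III}=\id_{[n]}$). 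By construction $T_I$ has no cups, no non-contractible loops, and obeys the Type I star rule, so $T_I\in\T_I$; dually $T_{III}\in\T_{III}$; and $T_{II}$ has no caps or cups, so $T_{II}\in\T_{II}$. Re-gluing $C_0$ and then $C_1$ is precisely the composition $T_{III}\circ T_{II}\circ T_I$ (no contractible loops are created), and it returns $T$: the only thing to check is that the through strings get reconnected as in $T$, which is exactly what the choice of $*$'s on $C_1,C_0$ was arranged to guarantee.

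\emph{Uniqueness.} Suppose $T=T_{III}\circ T_{II}\circ T_I=T'_{III}\circ T'_{II}\circ T'_I$. Composing a tangle on the outside with a Type II and then a Type III tangle changes neither $D_1$, nor its $2m$ marked points, nor $*_1$, nor the collection of strings joining two internal boundary points (a through string of $T_I$ stays a through string, a cap stays a cap, and no new cap can appear since the Type II and Type III pieces have none). Hence $\caps(T)=\caps(T_I)$ as oriented strings on $D_1(T)=D_1(T_I)$, so $\capind(T)=\capind(T_I)$, and likewise $\capind(T)=\capind(T'_I)$. Both $T_I,T'_I$ have source $[m]$, and their common target is forced by $T$ as well ($[m-|\caps(T)|]$ when this is positive, and $[0+]$ or $[0-]$ — the sign fixed by the checkerboard shading, which $\capind$ determines by the proof of Proposition \ref{capindex} — when it is zero); so Proposition \ref{capindex} gives $T_I=T'_I$. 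Applying the involution, $T^*=T_I^*\circ T_{II}^*\circ T_{III}^*=(T'_I)^*\circ(T'_{II})^*\circ(T'_{III})^*$, and by Lemma \ref{typeinvolution} these are again decompositions of the required form, now of $T^*$; the argument just given, applied to $T^*$, yields $T_{III}^*=(T'_{III})^*$, hence $T_{III}=T'_{III}$. Finally $T_{II},T'_{II}$ are Type II tangles with equal source and target (the target of $T_I=T'_I$ and the source of $T_{III}=T'_{III}$); their non-contractible loops are in bijection with those of $T$, and, because $T_I$ and $T_{III}$ carry no twist by the star rules (I-2)/(III-2), the relative star position of the composite equals that of $T_{II}$; hence $\rel_*(T_{II})=\rel_*(T)=\rel_*(T'_{II})$, so Proposition \ref{capindex} gives $T_{II}=T'_{II}$.

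\emph{The main obstacle} is the bookkeeping around the distinguished intervals: the $*$'s on $C_1$ and $C_0$ are forced on us if the three pieces are to be honestly of Types I, II, III, and simultaneously they must be compatible in the sense that recomposing returns $T$ itself rather than a rotate of $T$. Making this precise — equivalently, verifying in the uniqueness half that $\capind(T_I)$, $\cupind(T_{III})$ and $\rel_*(T_{II})$, together with the $[0+]$ versus $[0-]$ choice, are genuinely functions of $T$ — is where the real work lies; once it is in hand, Lemma \ref{compose} and Proposition \ref{capindex} finish the argument.
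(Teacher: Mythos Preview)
Your proof is correct and rests on the same core idea as the paper's: the three pieces are pinned down by the identities $\capind(T_I)=\capind(T)$, $\cupind(T_{III})=\cupind(T)$, and $\rel_*(T_{II})=\rel_*(T)$, after which Proposition~\ref{capindex} finishes both halves.

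The execution differs in two minor but instructive ways. First, the paper reverses your order: it proves uniqueness first, observing directly that $l,k$ and the three invariants above are functions of $T$ alone, and then uses this to \emph{define} the pieces for existence --- $T_I$ is simply the unique Type~I $(m,l)$-tangle with the right $\capind$, and so on --- sidestepping your geometric cutting and the attendant star bookkeeping on $C_0,C_1$. Your isotope-and-slice construction is perfectly valid and arguably more concrete, but the paper's abstract route is shorter precisely because Proposition~\ref{capindex} already guarantees that a Type~I tangle with prescribed $\capind$ exists and is unique. Second, for $T_{III}$ the paper just reads $\cupind(T_{III})=\cupind(T)$ directly, whereas you pass through the involution and Lemma~\ref{typeinvolution}; both work, and yours has the virtue of reducing to the Type~I case already in hand.
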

\begin{proof} We begin by proving the uniqueness of such a decomposition as it will tell us how to find such a decomposition.
\itt{Uniqueness} Suppose we have a decomposition $T=T_{III}\circ T_{II}\circ T_I$ where $T_I\in\T_I(m,l)$, $T_{II}\in\T_{II}(l,k)$, and $T_{III}\in \T_{III}(k,n)$ for some $l,k\in\N\cup\{0\pm\}$. Note that $l,k$ are uniquely determined by $|\ts(T)|$ and the shading of $T$. Note further that $\capind(T_I)=\capind(T)$, $\rel_*(T_{II})=\rel_*(T)$, and $\cupind(T_{III})=\cupind(T)$. Hence $T_i$ is uniquely determined for $i\in\{I,II,III\}$ by Proposition \ref{capindex}.
\itt{Existence}
Let $l=k$ be the number of through strings of $T$. If $l=k=0$, set $l=0+$, respectively $l=0-$ if the region meeting $D_1(T)$ is unshaded, respectively shaded, and set $k=0+$, respectively $k=0-$ if the region meeting $D_0(T)$ is unshaded, respectively shaded. Let $T_I\in\T_I(m,l)$ be the unique tangle with $\capind(T_I)=\capind(T)$. Let $T_{II}\in\T_{II}(l,k)$ be the unique tangle with $\rel_*(T_{II})=\rel_*(T)$. Let $T_{III}\in\T_{III}(k,n)$ be the unique tangle with  $\cupind(T_{III})=\cupind(T)$. It is now obvious that $T=T_{III}\circ T_{II}\circ T_I$.  
\end{proof}

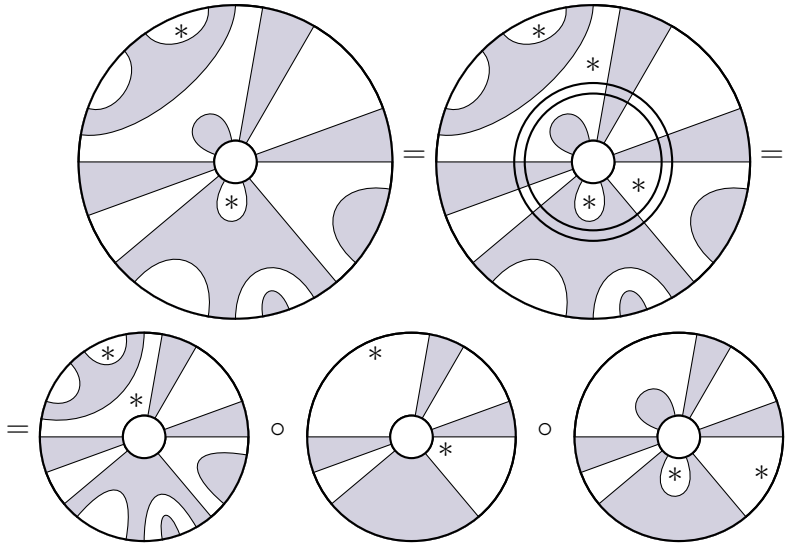
\begin{figure}[!ht]
\begin{align*}
\begin{tikzpicture}[PAdefn]
	\clip (0,0) circle (3cm);
	\draw[ultra thick] (0,0) circle (3cm);	
	\node at (0,0)  [empty box] (T) {};
%THROUGH STRINGS
	\filldraw[shaded] (0:4cm)--(0,0)--(20:4cm);	
	\filldraw[shaded] (60:4cm)--(0,0)--(80:4cm);
	\filldraw[shaded] (180:4cm)--(0,0)--(200:4cm);
	\filldraw[shaded] (220:5cm)--(0,0)--(310:5cm);		
%CAPS AND CUPS
	\filldraw[shaded] (T.100) .. controls ++(100:11mm) and ++           (160:11mm) .. (T.160);
	\filldraw[unshaded] (T.240) .. controls ++(240:11mm) and ++           (290:11mm) .. (T.290);
	\filldraw[shaded] (90:4cm)--(90:3cm) .. controls ++(270:11mm) and ++(350:11mm) .. (170:3cm) -- (170:4cm);
	\filldraw[unshaded] (100:4cm)--(100:3cm)  .. controls ++(280:6mm) and ++(305:6mm) ..         (125:3cm) -- (125:4cm);
	\filldraw[unshaded] (135:4cm)--(135:3cm)  .. controls ++(315:6mm) and ++(340:6mm) ..         (160:3cm) -- (160:4cm);
	\filldraw[unshaded] (270:4cm)--(270:3cm)  .. controls ++(90:11mm) and ++(120:11mm) ..         (300:3cm) -- (300:4cm);
	\filldraw[shaded] (280:4cm)--(280:3cm)  .. controls ++(100:6mm) and ++(110:6mm) .. (290:3cm) -- (290:4cm);
	\filldraw[unshaded] (230:4cm)--(230:3cm)  .. controls ++(50:11mm) and ++(80:11mm) ..         (260:3cm) -- (260:4cm);
	\filldraw[shaded] (320:4cm)--(320:3cm)  .. controls ++(140:11mm) and ++(170:11mm) ..         (350:3cm) -- (350:4cm);			
%	\filldraw[shaded] (T.180) .. controls ++(180:11mm) and ++(120:11mm) .. (T.120);		
	\draw[ultra thick] (0,0) circle (3cm);	
	\node at (0,0)  [empty box] (T) {};
	\node at (T.260) [below] {$*$};
	\node at (110:3cm)[below] {$*$};
\end{tikzpicture}=
\begin{tikzpicture}[PAdefn]
	\clip (0,0) circle (3cm);
	\draw[ultra thick] (0,0) circle (3cm);	
	\node at (0,0)  [empty box] (T) {};
%THROUGH STRINGS
	\filldraw[shaded] (0:4cm)--(0,0)--(20:4cm);	
	\filldraw[shaded] (60:4cm)--(0,0)--(80:4cm);
	\filldraw[shaded] (180:4cm)--(0,0)--(200:4cm);
	\filldraw[shaded] (220:5cm)--(0,0)--(310:5cm);		
%CAPS AND CUPS
	\filldraw[shaded] (T.100) .. controls ++(100:11mm) and ++           (160:11mm) .. (T.160);
	\filldraw[unshaded] (T.240) .. controls ++(240:11mm) and ++           (290:11mm) .. (T.290);
	\filldraw[shaded] (90:4cm)--(90:3cm) .. controls ++(270:11mm) and ++(350:11mm) .. (170:3cm) -- (170:4cm);
	\filldraw[unshaded] (100:4cm)--(100:3cm)  .. controls ++(280:6mm) and ++(305:6mm) ..         (125:3cm) -- (125:4cm);
	\filldraw[unshaded] (135:4cm)--(135:3cm)  .. controls ++(315:6mm) and ++(340:6mm) ..         (160:3cm) -- (160:4cm);
	\filldraw[unshaded] (270:4cm)--(270:3cm)  .. controls ++(90:11mm) and ++(120:11mm) ..         (300:3cm) -- (300:4cm);
	\filldraw[shaded] (280:4cm)--(280:3cm)  .. controls ++(100:6mm) and ++(110:6mm) .. (290:3cm) -- (290:4cm);
	\filldraw[unshaded] (230:4cm)--(230:3cm)  .. controls ++(50:11mm) and ++(80:11mm) ..         (260:3cm) -- (260:4cm);
	\filldraw[shaded] (320:4cm)--(320:3cm)  .. controls ++(140:11mm) and ++(170:11mm) ..         (350:3cm) -- (350:4cm);			
%	\filldraw[shaded] (T.180) .. controls ++(180:11mm) and ++(120:11mm) .. (T.120);		
	\draw[ultra thick] (0,0) circle (3cm);
	\draw[thick] (0,0) circle (1.5cm);
	\draw[thick] (0,0) circle (1.3cm);	
	\node at (0,0)  [empty box] (T) {};
	\node at (T.260) [below] {$*$};
	\node at (90:1.5cm) [above] {$*$};
	\node at (-20:1.3cm) [left] {$*$};	
	\node at (110:3cm)[below] {$*$};
\end{tikzpicture}=\\
=
\begin{tikzpicture}[annular]
	\clip (0,0) circle (2cm);
	\draw[ultra thick] (0,0) circle (2cm);	
	\node at (0,0)  [empty box] (T) {};
%THROUGH STRINGS
	\filldraw[shaded] (0:4cm)--(0,0)--(20:4cm);	
	\filldraw[shaded] (60:4cm)--(0,0)--(80:4cm);
	\filldraw[shaded] (180:4cm)--(0,0)--(200:4cm);
	\filldraw[shaded] (220:5cm)--(0,0)--(310:5cm);		
%CAPS AND CUPS
%	\filldraw[shaded] (T.100) .. controls ++(100:11mm) and ++           (160:11mm) .. (T.160);
%	\filldraw[unshaded] (T.240) .. controls ++(240:11mm) and ++           (290:11mm) .. (T.290);
	\filldraw[shaded] (90:4cm)--(90:2cm) .. controls ++(270:11mm) and ++(350:11mm) .. (170:2cm) -- (170:4cm);
	\filldraw[unshaded] (100:4cm)--(100:2cm)  .. controls ++(280:6mm) and ++(305:6mm) ..         (125:2cm) -- (125:4cm);
	\filldraw[unshaded] (135:4cm)--(135:2cm)  .. controls ++(315:6mm) and ++(340:6mm) ..         (160:2cm) -- (160:4cm);
	\filldraw[unshaded] (270:4cm)--(270:2cm)  .. controls ++(90:11mm) and ++(120:11mm) ..         (300:2cm) -- (300:4cm);
	\filldraw[shaded] (280:4cm)--(280:2cm)  .. controls ++(100:6mm) and ++(110:6mm) .. (290:2cm) -- (290:4cm);
	\filldraw[unshaded] (230:4cm)--(230:2cm)  .. controls ++(50:11mm) and ++(80:11mm) ..         (260:2cm) -- (260:4cm);
	\filldraw[shaded] (320:4cm)--(320:2cm)  .. controls ++(140:11mm) and ++(170:11mm) ..         (350:2cm) -- (350:4cm);			
%	\filldraw[shaded] (T.180) .. controls ++(180:11mm) and ++(120:11mm) .. (T.120);		
	\draw[ultra thick] (0,0) circle (2cm);	
	\node at (0,0)  [empty box] (T) {};
	\node at (T.110) [above] {$*$};
	\node at (110:2cm)[below] {$*$};
\end{tikzpicture}
\hs \circ \hs
\begin{tikzpicture}[annular]
	\clip (0,0) circle (2cm);
	\draw[ultra thick] (0,0) circle (2cm);	
	\node at (0,0)  [empty box] (T) {};
%THROUGH STRINGS
	\filldraw[shaded] (0:4cm)--(0,0)--(20:4cm);	
	\filldraw[shaded] (60:4cm)--(0,0)--(80:4cm);
	\filldraw[shaded] (180:4cm)--(0,0)--(200:4cm);
	\filldraw[shaded] (220:5cm)--(0,0)--(310:5cm);		
	\draw[ultra thick] (0,0) circle (2cm);	
	\node at (0,0)  [empty box] (T) {};
	\node at (T.-35) [right] {$*$};
	\node at (110:2cm)[below] {$*$};
\end{tikzpicture}
\hs \circ\hs
\begin{tikzpicture}[annular]
	\clip (0,0) circle (2cm);
	\draw[ultra thick] (0,0) circle (2cm);	
	\node at (0,0)  [empty box] (T) {};
%THROUGH STRINGS
	\filldraw[shaded] (0:4cm)--(0,0)--(20:4cm);	
	\filldraw[shaded] (60:4cm)--(0,0)--(80:4cm);
	\filldraw[shaded] (180:4cm)--(0,0)--(200:4cm);
	\filldraw[shaded] (220:5cm)--(0,0)--(310:5cm);		
%CAPS AND CUPS
	\filldraw[shaded] (T.100) .. controls ++(100:11mm) and ++           (160:11mm) .. (T.160);
	\filldraw[unshaded] (T.240) .. controls ++(240:11mm) and ++           (290:11mm) .. (T.290);
	\draw[ultra thick] (0,0) circle (2cm);	
	\node at (0,0)  [empty box] (T) {};
	\node at (T.260) [below] {$*$};
	\node at (-20:2cm)[left] {$*$};
\end{tikzpicture}
\end{align*}
\caption{Decomposition of an ATL tangle into $T_{III}\circ T_{II}\circ T_I$}
\end{figure}

%%%%%%%%%%%%%%%%%%%%%%%%%%%%%%%%%%%%%%%%%%%%%%%%%%%
%%%%%%%%%%%%%%%%%%%%%%%%%%%%%%%%%%%%%%%%%%%%%%%%%%%
%%%%%%%%%%%%%%%%%%%%%%%%%%%%%%%%%%%%%%%%%%%%%%%%%%%
\section{The Category $\aD$}\label{aD}

%%%%%%%%%%%%%%%%%%%%%%%%%%%%%%%%%%%%%%%%%%%%%%%%%%%
\subsection{Generators and Relations}

\begin{defn}
Let $\aD$, the annular category, be the following small category:
\itt{Objects} $[n]$ for $n\in\N\cup{0\pm}$, and
\itt{Morphisms} generated by 
\begin{align*}
\alpha_1\colon [1]&\rightarrow [0+],\hs \alpha_2\colon [1]\rightarrow [0-],\hsp{and}\\
\alpha_i\colon [n]&\longrightarrow [n-1] \hsp{for} i=1,\dots,2n\hsp{and} n\geq 2;\\
\beta_1\colon [0+]&\rightarrow [1],\hs\beta_2\colon [0-]\rightarrow [1],\hsp{and}\\
\beta_i\colon [n]&\longrightarrow [n+1] \hsp{for} i=1,\dots,2n+2\hsp{and} n\geq 1;\\
\tau\colon [n] &\longrightarrow [n] \hsp{for all} n\in\N;\hsp{and}\\
\delta_{\pm}\colon [n]&\longrightarrow [n]\hsp{for all} n\in\N\cup\{0\pm\}
\end{align*}
subject to the following relations:
\be
\item[(1)] $\alpha_i\alpha_j = \alpha_{j-2}\alpha_i$ for $i<j-1$ and $(i,j)\neq (1,2n)$,
\item[(2)] $\beta_i\beta_j=\beta_{j+2}\beta_i$ for $i\leq j$ and $(i,j)\neq (1,2n+2)$,
\item[(3)] $\tau^n=\id_{[n]}$,
\item[(4)] $\alpha_i\tau = \tau\alpha_{i-2}$ for $i\geq 3$, 
\item[(5)] $\beta_i \tau = \tau\beta_{i-2}$ for $i\geq 3$,
\item[(6)] $\delta_+=\alpha_1\beta_1\in \aD(0+,0+)$ and $\delta_-=\alpha_2\beta_2\in \aD(0-,0-)$. If $\alpha_i\beta_j\colon [n]\to [n]$ with $n\geq 1$, then 
$$\D
\alpha_i\beta_j=
\begin{cases}
\tau^{-1} &\hsp{if} (i,j)=(1,2n+2)\\
\beta_{j-2}\alpha_i &\hsp{if} i<j-1, (i,j)\neq (1,2n+2)\\
\id_{[n]} &\hsp{if} i=j-1\\
\delta_+ &\hsp{if} i=j\hsp{and $i$ is odd} \\
\delta_- &\hsp{if} i=j\hsp{and $i$ is even} \\
\id_{[n]} &\hsp{if} i=j+1\\
\beta_{j}\alpha_{i-2} &\hsp{if} i>j+1, (i,j)\neq (2n+2,1)\\
\tau &\hsp{if} (i,j)=(2n+2,1)
\end{cases}$$
\item[(7)] $\delta_\pm$ commutes with all other generators (including $\delta_\mp$).
\ee
\end{defn}

%%%%%%%%%%%%%%%%%%%%%%%%%%%%%%%%%%%%%%%%%%%%%%%%%%%
\subsection{Involution and Word Type}

\begin{defn}
A morphism $h\in\Mor(\aD)$ will be called primitive if $h$ is equal to $\alpha_i$, $\beta_i$, $t$, $\delta_\pm$, or $\id_{[n]}$ for $n\in\N\cup\{0\pm\}$. A word on $\aD$ is a sequence $h_r\cdots h_1$ with $r\geq 1$ of primitive morphisms in $\aD$. We say the length of such a word is $r\in\N$. By convention, we will say a word has length zero if and only if $r=1$ and $h_1=\id_{[n]}$ for some $n\in\N\cup\{0\pm\}$.
\end{defn}

\begin{defn}
We define a map $*$ on $\Ob(\aD)$ and on primitive morphisms in $\Mor(\aD)$:
\be
\item[(Ob)] For $n\in\N\cup\{0\pm\}$, define $[n]^*=[n]$.
\item[(I)] For all $n\in\N\cup \{0\pm\}$, define $\id_{[n]}^*=\id_{[n]}$.
\item[(A)] For $\alpha_1\in\aD(1,0+)$, define $\alpha_1^*=\beta_1\in\aD(0+,1)$. For $\alpha_2\in\aD(1,0-)$, define $\alpha_2^*=\beta_2\in\aD(0-,1)$. For $n\geq 2$ and $\alpha_i\in\aD(n,n-1)$, so $i\in\{1,\dots,2n\}$, define $\alpha_i^*=\beta_i\in\aD(n-1,n)$.
\item[(B)] For $\beta_1\in\aD(0+,1)$, define $\beta_1^*=\alpha_1\in\aD(1,0+)$. For $\beta_2\in\aD(0-,1)$, define $\beta_2^*=\alpha_2\in\aD(1,0-)$. For $n\geq 1$ and $\beta_i\in\aD(n,n+1)$, so $i\in\{1,\dots,2n+2\}$, define $\beta_i^*=\alpha_i\in\aD(n+1,n)$.
\item[(T)] For $n\in\N$ and $\tau\in\aD(n,n)$, define $\tau^*=\tau^{-1}$.
\item[(D)] For $n\in\N\cup\{0\pm\}$ and $\delta_{\pm}\in \aD(n,n)$, define $\delta_\pm^*=\delta_\pm$.
\ee
\end{defn}

\begin{prop}\label{involution}
The following extension of $*$ to $\Mor(\aD)$ is well defined:
\be
\item[$\bullet$] If $h_r\cdots h_1$ is a word on $\aD$, then we define $(h_r\cdots h_1)^*=h_1^*\cdots h_r^*$.
\ee
\end{prop}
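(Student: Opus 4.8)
The plan is to prove that the formula $(h_r\cdots h_1)^* = h_1^*\cdots h_r^*$ yields a well-defined map $\Mor(\aD)\to\Mor(\aD)$, i.e.\ that its value does not depend on which word is chosen to represent a given morphism. Recall that $\aD$ is given by a presentation, so two words $h_r\cdots h_1$ and $g_s\cdots g_1$ represent the same morphism of $\aD$ precisely when one can be transformed into the other by finitely many moves, each replacing a subword $L$ by a subword $R$ (or conversely) where $L=R$ is one of the relations (1)--(7), together with the trivial identity-law moves. Since reversal satisfies $(UXV)^* = V^*X^*U^*$ at the level of words, and since $\id_{[n]}^* = \id_{[n]}$, it suffices to check, for each relation $L=R$ in the presentation, that $L^*=R^*$ holds as an identity of morphisms of $\aD$: once this is known, $(ULV)^* = V^*L^*U^*$ and $(URV)^* = V^*R^*U^*$ agree in $\aD$, so $*$ respects each move, hence descends to $\Mor(\aD)$.

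So the work is to go through the eight families of relations, computing $L^*$ and $R^*$ from the definition of $*$ on primitives ($\alpha_i\leftrightarrow\beta_i$, $\tau^*=\tau^{-1}$, $\delta_\pm^*=\delta_\pm$, $\id^*=\id$), keeping track of which object each generator acts on (objects are fixed by $*$, but source and target are interchanged, so the admissible index ranges shift). For example, starring relation (1), $\alpha_i\alpha_j=\alpha_{j-2}\alpha_i$, gives $\beta_j\beta_i=\beta_i\beta_{j-2}$, which is exactly relation (2) applied to the pair $(i,j-2)$ on the appropriate object, the inequality $i<j-1$ and the excluded pair $(1,2n)$ of (1) being precisely what keeps $(i,j-2)$ inside the hypotheses of (2); relation (2) is handled symmetrically. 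Relation (3), $\tau^n=\id$, stars to $(\tau^{-1})^n=\id$, which holds. Relations (4) and (5) become each other after multiplying on both sides by $\tau$. Relation (7) stars to itself, since $\delta_\pm$ commuting with every generator makes it commute with every morphism, in particular with each $h^*$.

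The main obstacle is relation (6), which splits into eight cases. The key point is that applying $*$ to $\alpha_i\beta_j\colon[n]\to[n]$ produces $\alpha_j\beta_i\colon[n]\to[n]$, i.e.\ it interchanges $i$ and $j$, while $*$ on the right-hand sides does $\tau^{\pm1}\mapsto\tau^{\mp1}$, $\delta_\pm\mapsto\delta_\pm$, $\id\mapsto\id$, $\beta_{j-2}\alpha_i\mapsto\beta_i\alpha_{j-2}$, and $\beta_j\alpha_{i-2}\mapsto\beta_{i-2}\alpha_j$. One then verifies case by case that ``relation (6) for the pair $(j,i)$'' coincides with ``the $*$-image of relation (6) for the pair $(i,j)$'': e.g.\ the case $i<j-1$ of (6), namely $\alpha_i\beta_j=\beta_{j-2}\alpha_i$, stars to $\alpha_j\beta_i=\beta_i\alpha_{j-2}$, and since $i<j-1$ forces $j>i+1$, the pair $(j,i)$ lies in the case $i'>j'+1$ of (6), which reads exactly $\alpha_j\beta_i=\beta_i\alpha_{j-2}$; the excluded pairs $(1,2n+2)$ and $(2n+2,1)$ swap with one another (as do the two ``$\tau$'' outputs, consistently with $\tau^*=\tau^{-1}$), and the cases $i=j-1$, $i=j+1$, $i=j$ match up similarly. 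Once all eight relations are checked, $*$ is well defined on $\Mor(\aD)$, and the remaining involutive-contravariant-functor properties ($\id^*=\id$, $(T^*)^*=T$, $(ST)^*=T^*S^*$) are then immediate from the defining formula.
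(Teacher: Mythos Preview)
Your proof is correct and follows the same approach as the paper: verifying that $*$ preserves each defining relation. The paper's proof is the one-line observation that relations (3), (6), (7) are self-$*$ and that (1)$\leftrightarrow$(2), (4)$\leftrightarrow$(5) swap under $*$; you have supplied the details of this verification, including the case analysis for (6). (Minor slip: you twice refer to ``eight relations,'' but there are seven.)
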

\begin{proof}
We must check that $*$ preserves the relations of $\aD$. Note that relations (3), (6), and (7) are preserved by $*$, and the following pairs are switched: (1) $\&$ (2) and (4) $\&$ (5).
\end{proof}

\begin{defn}
By Proposition \ref{involution}, $*$ extends uniquely to an involution on $\aD$.
\end{defn}
\begin{cor}
We have an isomorphism of categories $\aD\cong \aD\op$.
\end{cor}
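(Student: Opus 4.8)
The plan is to invoke the general principle recorded in the Remark following the definition of an involutive category: any involutive category is isomorphic to its opposite. So the work has essentially already been done, and what remains is to unwind the definitions carefully, paying attention only to variance.

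First I would note that the preceding Definition, together with Proposition \ref{involution}, equips $\aD$ with an involution $*$, that is, a contravariant functor $*\colon \aD\to\aD$ of period two which fixes every object $[n]$. The key observation is then purely formal: a contravariant endofunctor of $\aD$ is precisely the same data as a \emph{covariant} functor $F\colon \aD\to\aD\op$. Explicitly, $F$ acts as the identity on objects (since $[n]^*=[n]$) and sends a morphism $\varphi\in\aD(X,Y)$ to $\varphi^*\in\aD(Y,X)=\aD\op(X,Y)$. That $F$ is a functor, i.e. $F(\id_X)=\id_X$ and $F(S\circ T)=F(S)\circ F(T)$ computed in $\aD\op$, is exactly axioms (1) and (3) of the involution, which hold by Proposition \ref{involution}. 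By the same rule one gets a covariant functor $G\colon \aD\op\to\aD$, again identity on objects and $\varphi\mapsto\varphi^*$ on morphisms.

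Finally I would check that $F$ and $G$ are mutually inverse: on objects both composites $F\circ G$ and $G\circ F$ are plainly the identity, and on any morphism $\varphi$ both composites return $(\varphi^*)^*=\varphi$ by axiom (2), the period-two property. Hence $F$ is an isomorphism of categories with inverse $G$, giving $\aD\cong\aD\op$. I do not expect any genuine obstacle here; the only thing needing care is the bookkeeping of variance — recognizing that the single period-two contravariant endofunctor $*$ furnishes simultaneously the functor $\aD\to\aD\op$ and its inverse $\aD\op\to\aD$.
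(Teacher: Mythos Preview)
Your proposal is correct and follows exactly the approach the paper intends: the corollary is stated without proof immediately after $\aD$ is equipped with its involution, relying on the earlier Remark that any involutive category is isomorphic to its opposite. You have simply unpacked that remark carefully, which is more detail than the paper itself provides.
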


\begin{prop}\label{additional} The following additional relations hold in $\aD$:
\be
\item[(1)] $\alpha_1\tau = \alpha_{2n-1}$ and $\alpha_2 \tau = \alpha_{2n}$,
\item[(2)] $\tau\beta_{2n+1}=\beta_1$, $\tau\beta_{2n+2}=\beta_{2}$, and
\item[(3)] $\beta_1 \tau = \tau^2\beta_{2n-1}$ and $\beta_2 \tau = \tau^2\beta_{2n}$.
\ee
\end{prop}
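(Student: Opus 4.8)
The plan is to derive (1)--(3) purely from the defining relations (3), (4), (5) of $\aD$, together with the involution for the passage from (1) to (2); no geometry of tangles is needed. The one point that requires care throughout is bookkeeping: $\tau^{k}$ names different morphisms on $[n-1]$, $[n]$ and $[n+1]$, and by relation (3) the element $\tau$ has order $n-1$, $n$, $n+1$ on these respective objects, so the cancellations below must always record the ambient object.

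For (1) I would first rewrite relation (4) in conjugated form as $\alpha_{i+2}=\tau\alpha_i\tau^{-1}$, valid for every $i\geq 1$ for which the indices make sense (when $\alpha_i\colon[n]\to[n-1]$ the left $\tau$ acts on $[n-1]$ and the right $\tau$ on $[n]$). Iterating gives $\tau^{k}\alpha_1\tau^{-k}=\alpha_{2k+1}$ for $0\leq k\leq n-1$, and taking $k=n-1$ yields $\alpha_{2n-1}=\tau^{n-1}\alpha_1\tau^{-(n-1)}$. Now relation (3) finishes it: $\tau^{n-1}=\id_{[n-1]}$ on the target, while on the source $[n]$ one has $\tau^{-(n-1)}=\tau$ (since $\tau^n=\id_{[n]}$), so $\alpha_{2n-1}=\alpha_1\tau$. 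The identity $\alpha_{2n}=\alpha_2\tau$ is obtained identically, conjugating $\alpha_2$ and starting from $\alpha_4=\tau\alpha_2\tau^{-1}$. (For $n=1$ both statements are trivial, since then $\tau=\id_{[1]}$ and $2n-1=1$, $2n=2$.)

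For (2) I would apply the involution $*$ of $\aD$ to the identities of (1). Since $*$ is a contravariant functor fixing objects with $\alpha_i^{*}=\beta_i$ and $\tau^{*}=\tau^{-1}$, the equation $\alpha_1\tau=\alpha_{2n-1}$ in $\aD(n,n-1)$ transforms into $\tau^{-1}\beta_1=\beta_{2n-1}$ in $\aD(n-1,n)$; replacing the source dimension $n-1$ by $n$ and then composing on the left with $\tau$ (so that $\tau\tau^{-1}=\id$ by relation (3)) gives $\tau\beta_{2n+1}=\beta_1$. Applying $*$ to $\alpha_2\tau=\alpha_{2n}$ in the same manner yields $\tau\beta_{2n+2}=\beta_2$.

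For (3) I would combine (2) with relation (5). Reading (2) as $\beta_1=\tau\beta_{2n+1}$ gives $\beta_1\tau=\tau\beta_{2n+1}\tau$, and relation (5), applicable since $2n+1\geq 3$ for $n\geq 1$, turns $\beta_{2n+1}\tau$ into $\tau\beta_{2n-1}$, hence $\beta_1\tau=\tau^2\beta_{2n-1}$; the identity $\beta_2\tau=\tau^2\beta_{2n}$ comes out the same way from $\beta_2=\tau\beta_{2n+2}$ and relation (5) applied to $\beta_{2n+2}$. I do not anticipate a genuine obstacle: the only things that can go wrong are getting the powers of $\tau$ onto the wrong object before invoking relation (3), and overlooking the degenerate case $n=1$ where several of the indices coincide; both are taken care of by the remarks in the first two paragraphs.
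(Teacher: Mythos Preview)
Your proof is correct and matches the paper's approach: both iterate relation~(4) together with $\tau^{n}=\id$ for part~(1), apply the involution for part~(2), and rely on relation~(5) for part~(3). The only cosmetic difference is in part~(3): you invoke the already-proved~(2) and apply relation~(5) once, whereas the paper argues directly by iterating relation~(5) in the same style as~(1).
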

\begin{proof}
\item[(1)]
By relations (4) and (5), we have 
$$\alpha_{2n-1}=\alpha_{2n-1}\tau^n = \tau\alpha_{2n-3} \tau^{n-1}=\cdots = \tau^{n-1} \alpha_{3} \tau = \tau^n\alpha_1 = \alpha_1.
$$
The proof of the other relation is similar.
\item[(2)]
These relations are merely $*$ applied to (1).
\item[(3)]
By relations (4) and (6), we have
$$
\tau^2\beta_{2n-1} =\tau^2\beta_{2n-1} \tau^{n}=\tau^2 \tau \beta_{2n-3}\tau^{n-1}=\cdots=\tau^{2} \tau^{n-1}\beta_1\tau = \tau^{n+1}\beta_1\tau = \beta_1\tau.
$$
The proof of the other relation is similar.
\end{proof}

\begin{nota}
\item[(1)]
If $h\in \Mor(\aD)$, we write $h\in \A_1$ if $h=\alpha_i\in \aD(1,0\pm)$ where $i\in\{1,2\}$. We write $h\in \A_n$ where $n\geq 2$ if $h=\alpha_i\in\aD(n,n-1)$ for some $i\in\{1,\dots,2n\}$. We write $h\in \A$ if $h\in \A_n$ for some $n\geq 1$. Similarly we define $\B_n$ for $n\in\N\cup\{0\pm\}$ and $\B$.
\item[(2)] For convenience, we will use the notation $\sigma_+=\alpha_2\beta_1\in \aD(0+,0-)$ and $\sigma_-=\alpha_1\beta_2\in \aD(0-,0+)$.
\end{nota}

\begin{defn}
A word $w=h_r\cdots h_1$ on $\aD$ is called
\be
\itt{Type I} if $w$ has length zero or if $h_i\in \A$ for all $i\in\{1,\dots,r\}$.
\itt{Type II} if either
\be
\item[(1)] $w$ has length zero,
\item[(2)] $r>0$ and $h_i=\tau$ for all $i\in\{1,\dots,r\}$, or 
\item[(3)] $r=2s$ for some $s>0$ and $h_ih_{i+1}=\sigma_\pm$ for all odd $i$ so that
$$w=\begin{cases}
(\sigma_\pm\sigma_\mp)^k\sigma_\pm &\hsp{if $s=2k+1$ is odd, or}\\
(\sigma_\pm\sigma_\mp)^k &\hsp{if $s=2k$ is even.}
\end{cases}$$
\ee
\itt{Type III} if $w$ has length zero or if $h_i\in \B$ for all $i\in\{1,\dots,r\}$.
\ee
Denote the set of all words of Type i by $\W_i$, and denote the set of all words of Type $i$ with domain $[m]$ and codomain $[n]$ by $\W_i(m,n)$  for $i\in\{I,II,III\}$.
\end{defn}

\begin{lem} Let $m,n\in\N\cup\{0\pm\}$. Types are related to the involution as follows:
\be
\item[(1)] $w\in\W_I(m,n)$ if and only if $w^*\in \W_{III}(n,m)$, and
\item[(2)] If $w\in\W_{II}(n,n)$, then $w^*\in\W_{II}(n,n)$.
\ee
\end{lem}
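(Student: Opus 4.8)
The plan is a routine case analysis built on the fact that the involution $*$ on $\aD$ reverses the order of a word and acts almost trivially on primitives: it exchanges $\alpha_i\leftrightarrow\beta_i$ (in particular it carries $\A$ onto $\B$ and back), sends $\tau$ to $\tau^{-1}$, fixes each $\delta_\pm$, and fixes identities. Throughout I regard $\W_I,\W_{II},\W_{III}$ as subsets of $\Mor(\aD)$ via the evident map from words to the morphisms they represent, so that ``$w^*\in\W_i$'' means $w^*$ is represented by a word of that type; this matters only in the $\tau$-case of part (2), where $\tau^{-1}$ is not itself a primitive but represents the same morphism as the Type~II word $\tau^{\,n-1}$.

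For part (1), since $*$ is an involution it suffices to prove the two implications $w\in\W_I(m,n)\Rightarrow w^*\in\W_{III}(n,m)$ and $w\in\W_{III}(m,n)\Rightarrow w^*\in\W_I(n,m)$, the reverse directions then following by applying one implication to $w^*$. For the first: if $w$ has length zero then $w=\id_{[m]}$, so $m=n$ and $w^*=\id_{[m]}$ has length zero, hence lies in $\W_{III}(n,m)$; otherwise $w=h_r\cdots h_1$ with every $h_i=\alpha_{j_i}\in\A$ (including the boundary generators $\alpha_1\colon[1]\to[0+]$ and $\alpha_2\colon[1]\to[0-]$), and since $*$ reverses order and $\alpha_j^*=\beta_j\in\B$ we get $w^*=\beta_{j_1}\cdots\beta_{j_r}$, a word all of whose letters lie in $\B$, i.e.\ $w^*\in\W_{III}(n,m)$. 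The second implication is the same argument with $\A$ and $\B$ interchanged, using $\beta_j^*=\alpha_j$.

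For part (2), let $w\in\W_{II}(n,n)$. Because $w$ has equal domain and codomain, the ``odd'' alternative $w=(\sigma_\pm\sigma_\mp)^k\sigma_\pm$ in the definition of Type~II, which is a morphism $[0\pm]\to[0\mp]$, cannot occur; so $w$ is $\id_{[n]}$, or $w=\tau^r$ with $n\in\N$, or $w=(\sigma_\pm\sigma_\mp)^k$ with $n=0\pm$. In the first case $w^*=\id_{[n]}$ is again length zero. In the second, $w^*=(\tau^{-1})^r$ is again a power of $\tau\colon[n]\to[n]$ (or the identity), hence of Type~II. In the third, a one-line computation from the definitions gives $\sigma_+^*=(\alpha_2\beta_1)^*=\beta_1^*\alpha_2^*=\alpha_1\beta_2=\sigma_-$ and, symmetrically, $\sigma_-^*=\sigma_+$; since $*$ is contravariant, $w^*=\big((\sigma_\pm\sigma_\mp)^*\big)^k=(\sigma_\mp^*\sigma_\pm^*)^k=(\sigma_\pm\sigma_\mp)^k=w\in\W_{II}(n,n)$.

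There is no serious obstacle here: the proof is bookkeeping. The points needing a little care are that $*$ genuinely descends to an involution on $\aD$ --- this is Proposition \ref{involution}, i.e.\ that $*$ exchanges the relation families (1)$\leftrightarrow$(2) and (4)$\leftrightarrow$(5) and preserves (3), (6), (7) --- that the low-index generators $\alpha_{1,2},\beta_{1,2}$ attached to the objects $[0\pm]$ must be handled uniformly with the rest, and that restricting to endomorphism words in part (2) is exactly what removes the odd $\sigma$-string case (so that $\W_{II}(n,n)$ is closed under $*$, even though $\W_{II}(m,n)$ for $m\neq n$ is not an interesting target for this statement).
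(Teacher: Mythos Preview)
Your proof is correct and follows the same approach the paper intends: the paper's own proof is simply ``Obvious,'' and what you have written is a careful unpacking of that obviousness via the action of $*$ on primitives and the definitions of the word types. Your remark that $\tau^*=\tau^{-1}$ is not literally a primitive, so that the statement must be read at the level of morphisms (identifying $(\tau^{-1})^r$ with the Type~II word $\tau^{n-r}$), is a genuine subtlety the paper leaves implicit; it does not alter the argument but is worth noting.
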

\begin{proof}
Obvious.
\end{proof}

%%%%%%%%%%%%%%%%%%%%%%%%%%%%%%%%%%%%%%%%%%%%%%%%%%%
\subsection{Standard Forms}

\begin{nota}
if we replace $j$ with $j+2$ in the statement of relation (1), we get the equivalent relation
\be
\item[(1')] $\alpha_j\alpha_i = \alpha_i\alpha_{j+2} \hsp{for all} j\geq i \hsp{with} (j,i)\neq (2n,1)$
\ee
as maps $[n+1]\to [n-1]$.
\end{nota}

\begin{defn}\label{irredalpha}
A word $w\in\W_I(m,n)$ with $m\geq 1$ is called irreducible if either
\item[(1)] $w=\alpha_{i_k}\cdots \alpha_{i_1}$ where $i_r>i_{r+1}$ for all $r\in\{1,\dots,k-1\}$ and $i_r<2(m-r)+2$ for all $r\in\{1,\dots,k\}$, in which case we also say $w$ is ordered, or
\item[(2)]
$w=\alpha_q\alpha_{i_k}\cdots\alpha_{i_1}\alpha_{j_l}\cdots\alpha_{j_1}\in\W_{I}(m,n)$ where $m\geq 1$ and $l,k\geq 0$ such that
\be
\item[(i)] $q=2n+2$,
\item[(ii)] $i_r>i_{r+1}$ for all $r\in\{1,\dots,k-1\}$, $i_1<j_l$, and $j_s>j_{s+1}$ for all $s\in\{1,\dots,l-1\}$, and
\item[(iii)] $i_r\leq 2(k-r)+1$ for all $r\in\{1,\dots,k\}$ and $j_s\geq 2(m-s)+1$ for all $s\in \{1,\dots,l\}$.
\ee
\end{defn}

\begin{rem}
If $\alpha_q\alpha_{i_k}\cdots\alpha_{i_1}\alpha_{j_l}\cdots\alpha_{j_1}$ is irreducible as in (2) of \ref{irredalpha}, then so are $$\alpha_q\alpha_{i_k}\cdots\alpha_{i_1}\alpha_{j_l}\cdots\alpha_{j_r}\hsp{and}\alpha_q\alpha_{i_k}\cdots\alpha_{i_s}$$ for all $r\in\{1,\dots,l\}$ and  $s\in\{1,\dots,k\}$. In particular, if $l>0$, then $j_l=2(m-l)+1$, and if $k>0$, then $i_k=1$.
\end{rem}

\begin{alg}\label{irredalg}
Suppose $w =\alpha_{i_k}\cdots \alpha_{i_1}\in\W_1(m,n-1)$ is ordered where $n-1>0$. The following algorithm gives words $u_1, u_2$ where $u_1$ is irreducible and $ \alpha_{2n} w=u_2 u_1$. Set $u_1=\alpha_{2n}w$ and $u_3=\id_{[n-1]}$.
\itt{Step 1} If $u_1$ is irreducible, set $u_2=u_3$. We are finished. Otherwise, proceed to Step 2.
\itt{Step 2} There is a $j\in \{1,\dots,k\}$ such that $2(k-j)+1<i_j<2(m-j)+1$. Pick $j$ minimal with this property. Use relation (1) to push $\alpha_{i_k}\cdots\alpha_{i_{j+1}}$ past $\alpha_{i_j}$ to get
$$
w=\alpha_{2n}\alpha_{i_j-2(k-j)+2}\alpha_{i_{k-1}}\cdots\alpha_{i_{j+1}}\alpha_{i_{j-1}}\cdots\alpha_{i_1}.
$$
Note that 
$$1<i_j-2(k-j)< 2(m-j)+1-2(k-j)=2(m-k)+1=2n+1,$$ 
as $m-k=n$, so we may use relation (1') to get
$$
\alpha_{i_j-2(k-j)}\alpha_{2n+2}\alpha_{i_{k-1}}\cdots\alpha_{j+1}\alpha_{j-1}\cdots\alpha_{i_1}.
$$
Set $u_2=\alpha_{i_j-2(k-j)+2} u_3$. Now set $u_3=u_2$. Set
$$
u_1=\alpha_{2n+2}\alpha_{i_{k-1}}\cdots\alpha_{j+1}\alpha_{j-1}\cdots\alpha_{i_1}.
$$
Go back to Step 1.
\end{alg}
\begin{proof}
We need only prove the above algorithm terminates. Note one of the $\alpha_i$'s increases in index each iteration, which cannot happen indefinitely. 
\end{proof}

\begin{prop}\label{alpha} Suppose $m\in\N$ and $n\in\N\cup\{0\pm\}$. Each $w\in\W_I(m,n)$ has a decomposition $w=w_r\cdots w_1$ where each $w_i\in W_I$ is irreducible. Such a decomposition of $w$ is called a standard decomposition of $w$.
\end{prop}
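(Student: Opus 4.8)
The plan is to follow the pattern of Proposition~\ref{a}: rather than building the decomposition by hand, I would record an explicit rewriting algorithm — the word-level analogue of Algorithm~\ref{readtangle} — that takes $w\in\W_I(m,n)$ and returns a standard decomposition, and then conclude existence from that algorithm. The algorithm reads the generators of $w$ from right to left while maintaining an accumulated irreducible word $v$. The base cases are immediate: a length-zero word is its own empty decomposition, and a single generator $\alpha_i$ is already irreducible, being either ordered or of the second kind of Definition~\ref{irredalpha} with $k=l=0$. At a general step the algorithm prepends the next generator $\alpha_j$ to $v$ and rewrites $\alpha_j v$ as $u\,v'$ with $v'$ irreducible; if $u$ is trivial it updates $v\leftarrow v'$ and continues, and otherwise it freezes $v'$ as the next factor $w_i$, pushes the generators of $u$ back onto the list still to be processed, and resumes.

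The substance of a step is the rewriting $\alpha_j v = u\,v'$. If $v=\alpha_{i_k}\cdots\alpha_{i_1}$ is ordered, one bubbles $\alpha_j$ rightward: passing a generator whose index exceeds $j+1$ uses relation~(1) and leaves $\alpha_j$'s index fixed, while passing a smaller-indexed generator uses relation~(1') and raises $\alpha_j$'s index by $2$. This halts in one of three ways — in a longer ordered word; with $\alpha_j$ lodged next to a generator whose index lies within $1$ of $\alpha_j$'s, which is precisely the outer-cap configuration of an irreducible word of the second kind; or with the moving index reaching an extremal value that blocks relation~(1'), at which point a generator spills off to the left and contributes to $u$. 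The spilling is governed exactly by Algorithm~\ref{irredalg} (the case $\alpha_j=\alpha_{2n}$) and its minor variants. If instead $v$ is already of the second kind, $v=\alpha_{2n+2}\alpha_{i_k}\cdots\alpha_{i_1}\alpha_{j_l}\cdots\alpha_{j_1}$, one first carries $\alpha_j$ past the leading $\alpha_{2n+2}$ using relation~(4) and Proposition~\ref{additional}, which are $\tau$-conjugation moves, thereby reducing to the ordered case.

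The main obstacle is termination. A spill step shortens $v$ but lengthens the list of generators still to process, so neither count is individually monotone; I would handle this as in the termination proof of Algorithm~\ref{irredalg}. The key point there is that the indices of a word $\alpha_{i_k}\cdots\alpha_{i_1}\in\W_I(m,\cdot)$ are bounded above ($i_s\le 2(m-s+1)$) and every bubbling iteration strictly raises one of them, equivalently strictly lowers the nonnegative total defect $\sum_s\bigl(2(m-s+1)-i_s\bigr)$; combined with the number of generators already committed to a finished factor, this yields a well-founded measure, and one checks that each of the finitely many kinds of move strictly decreases it. Pinning down the exact measure and running through the cases (in particular the edge cases around $(i,j)=(1,2n)$, where $\tau$ briefly enters) is lengthy but routine, using only the defining relations of $\aD$ and Proposition~\ref{additional}. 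Uniqueness of the standard decomposition is not claimed in this proposition; it would be proved afterwards in parallel with the uniqueness half of Proposition~\ref{a}.
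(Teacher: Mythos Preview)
Your outline is close in spirit to the paper's argument, but the step for the ``second kind'' case is a genuine gap. The paper's proof avoids that case entirely: it first uses relation~(1') to globally sort $w$ into a concatenation $w_r\cdots w_1$ of \emph{ordered} blocks, the only obstruction being the excluded pair $\alpha_{2k}\alpha_1$, so that consecutive blocks meet exactly there; it then applies Algorithm~\ref{irredalg} once, to $\alpha_{2k}w_1$ with $w_1$ ordered, to peel off a single irreducible factor $u_1$, and inducts on the strictly smaller length of what remains. In particular the paper never has to prepend a generator to an irreducible of the second kind: such a word is frozen the moment it appears.

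Your incremental scheme, by contrast, can produce a current $v$ of the second kind with $u$ trivial, and must then absorb the next generator into it. Your proposed tool---relation~(4) and Proposition~\ref{additional}---consists of $\alpha$--$\tau$ identities, so invoking them takes the word out of $\W_I$. Concretely, $\alpha_1\alpha_{2n+2}$ is excluded from both (1) and (1'); rewriting via $\alpha_1=\alpha_{2n-1}\tau^{-1}$ and $\tau^{-1}\alpha_{2n+2}=\alpha_{2n}\tau^{-1}$ yields $\alpha_{2n-1}\alpha_{2n}\tau^{-1}$, and the dangling $\tau^{-1}$ must then be pushed through the rest of $v$, reindexing every letter. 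That can be made to work, but it is neither ``reducing to the ordered case'' nor routine, and your termination measure would have to track these global reindexings too. The clean fix is to mirror the paper: either do the global (1')-sort first, or equivalently freeze $v$ the instant it becomes of the second kind, so that the only nontrivial rewriting step is prepending a single $\alpha_j$ to an ordered word---which is precisely what Algorithm~\ref{irredalg} handles---and termination then follows from straight length induction rather than a bespoke defect count.
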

\begin{proof}
We induct on the length of $w$. If the length of $w$ is $1$, then we are finished. Suppose $w$ has length greater than $1$ and the result holds for all words of shorter length. Use relation (1') to get $w'=w_r\cdots w_1$ where each $w_i$ is ordered and for each $s\in\{1,\dots,r-1\}$. If $w_s=\alpha_{i_a}\cdots\alpha_{i_1}$ and $w_{s+1}=\alpha_{j_b}\cdots\alpha_{j_1}$, then $i_a=1$, $j_1=2k$, so $\alpha_{j_1}\alpha_{i_a}=\alpha_{2k}\alpha_1\in\aD(k+1,k-1)$ for some $k\geq 2$. There are two cases.
\itt{Case 1} $r=1$. Then $w=w_1$ is ordered, hence irreducible, and we are finished.
\itt{Case 2} Suppose $r>1$. As $w_2=\alpha_{i_a}\cdots\alpha_{i_1}$ where $\alpha_{i_1}=\alpha_{2k}\in\aD(k,k-1)$, we apply Algorithm \ref{irredalg} to the word $\alpha_{2k}w_1$ to obtain $u_1,u_2$ with $u_1$ irreducible such that $u_2u_1=\alpha_{2k} w_1$. We now note that $w=w'u_1$ where
$$w'=w_r\cdots w_3 \alpha_{i_a}\cdots \alpha_{i_2}$$
is a word of strictly smaller length. Applying the induction hypothesis to $w'$ gives us the desired result.
\end{proof}

\begin{thm}[Standard Forms]\label{standard}
Suppose $w=h_r\cdots h_1$ is a word on $\aD$ in $\aD(m,n)$ for $m,n\in\N\cup\{0\pm\}$. Then there is a decomposition $w=\delta_+^{c_+}\delta_-^{c_-} w_{III}w_{II} w_{I}$ where $w_i\in \W_i$ for all $i\in\{I,II,III\}$, $c_\pm\geq 0$, and $w_I$ and $w_{III}^*$ are in the form afforded by Proposition \ref{alpha}.
\end{thm}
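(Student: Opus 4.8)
The plan is to establish existence of the standard form by well-founded induction, using the defining relations of $\aD$ together with the derived relations of Proposition \ref{additional} as one-way rewriting rules that drive a word toward the shape $(\delta\text{'s})(\beta\text{'s})(\W_{II}\text{-part})(\alpha\text{'s})$. First I would dispose of the $\delta$'s: by relation (7) each $\delta_\pm$ commutes past every generator, so one sweep produces $w=\delta_+^{c_+}\delta_-^{c_-}w'$ with $w'$ free of $\delta$'s, and whenever a $\delta$ occurred $w'$ is strictly shorter and the induction hypothesis applies to it. Hence we may assume $w=h_r\cdots h_1$ with every $h_i\in\A\cup\B\cup\{\tau\}$.

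The heart of the argument is the sorting step. Call an adjacent pair $h_{i+1}h_i$, read left to right, \emph{bad} if it is an $\alpha$ followed by a $\beta$ whose composite is an endomorphism of some $[n]$, $n\in\N$ (so that relation (6) applies; this deliberately excludes the irreducible pairs $\alpha_2\beta_1=\sigma_+$ and $\alpha_1\beta_2=\sigma_-$), or an $\alpha$ followed by a $\tau$, or a $\tau$ followed by a $\beta$. Every bad pair admits a rewrite: relation (6) replaces $\alpha_{i'}\beta_{j'}$ either by a word with the $\beta$ moved left of the $\alpha$ (same length) or by a shorter word, namely $\id$, $\delta_\pm$, $\tau$, or $\tau^{-1}$; relation (4), supplemented by Proposition \ref{additional}(1) when $i'\in\{1,2\}$, replaces $\alpha_{i'}\tau$ by $\tau\alpha_{i'-2}$ or by a single $\alpha$; and relation (5) read right to left, supplemented by Proposition \ref{additional}(2) at the extreme indices, replaces $\tau\beta_{j'}$ by $\beta_{j'+2}\tau$ or by a single $\beta$. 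I would apply such rewrites repeatedly and prove termination with the lexicographically ordered triple $(\mu_1(w),\mu_2(w),r(w))$, where $\mu_1$ counts the pairs in which a $\beta$ lies to the right of an $\alpha$, $\mu_2$ counts those in which an $\alpha$ lies to the left of a $\tau$ plus those in which a $\tau$ lies to the left of a $\beta$, and $r$ is the length. One checks that the rewrite $\alpha\beta\leadsto\beta\alpha$ and every length-decreasing rewrite strictly drop $\mu_1$, that the rewrites $\alpha\tau\leadsto\tau\alpha$ and $\tau\beta\leadsto\beta\tau$ leave $\mu_1$ fixed and strictly drop $\mu_2$, and that extracting a $\delta$ leaves $\mu_1,\mu_2$ fixed and strictly drops $r$; the delicate point is that the rewrite $\alpha_1\beta_{2n+2}=\tau^{-1}$, which must be recorded as $\tau^{\,n-1}$ and hence can \emph{increase} $r$, still strictly drops the leading term $\mu_1$, so the measure decreases regardless.

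Once no bad pair remains, a short case analysis on the sequence of objects traversed by the word shows it has the form $(\text{word in }\B)\,(\text{word in }\W_{II})\,(\text{word in }\A)$, mirroring the decomposition $T_{III}\circ T_{II}\circ T_I$ of Theorem \ref{decomposition}: the only local configurations obstructing a plain type-ordering are the pairs $\sigma_\pm$, and these occur only over the objects $[0\pm]$ (no $\tau$, and no $\alpha_{i'}$ with $i'\ge 3$, touches $[0\pm]$), where they necessarily assemble into a Type II $\sigma$-word since from $[0+]$, resp.\ $[0-]$, the only outgoing generator is $\beta_1$, resp.\ $\beta_2$. I would then feed the $\alpha$-block to Proposition \ref{alpha} to write it as a concatenation of irreducibles $w_I$, apply Proposition \ref{alpha} to the involution of the $\beta$-block, and replace the $\beta$-block by the primitive-wise involution of that decomposition, so that $w_{III}^*$ is in the required form; the middle block already lies in $\W_{II}$, and re-attaching the $\delta$-prefix completes the proof. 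The step I expect to be the main obstacle is the bookkeeping at the objects $[0\pm]$ — verifying that the surviving $\sigma_\pm$-pairs line up into the canonical Type II pattern rather than some non-standard interleaving, and that the $\tau^{-1}$ rewrite does not sabotage termination — since the monovariant computations and the block reductions are routine or are exactly Proposition \ref{alpha}.
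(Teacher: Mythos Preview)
Your overall strategy is sound and close in spirit to the paper's, but there is one concrete gap. Your definition of a ``bad'' $\alpha\beta$ pair requires the composite to be an endomorphism of some $[n]$ with $n\in\N$; this excludes not only the irreducible pairs $\sigma_\pm$ but also $\alpha_1\beta_1\in\aD(0+,0+)$ and $\alpha_2\beta_2\in\aD(0-,0-)$, which by the first clause of relation~(6) equal $\delta_+$ and $\delta_-$. Since you never rewrite these, a word such as $\alpha_1\beta_1$ (or, inside a longer word, a segment $\cdots\alpha_1\beta_1\cdots$ passing through $[0+]$) survives your sorting with no bad pairs yet is \emph{not} of the form $w_{III}w_{II}w_I$: it is neither a $\sigma$-word nor a product of $\beta$'s, $\tau$'s, and $\alpha$'s in the right order. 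Your closing sentence ``the only local configurations obstructing a plain type-ordering are the pairs $\sigma_\pm$'' is therefore false as stated. The fix is immediate---declare $\alpha_1\beta_1$ and $\alpha_2\beta_2$ bad as well, rewriting them to $\delta_\pm$; this strictly drops $\mu_1$ and hence fits your termination scheme---but you should make it explicit.

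For comparison, the paper avoids any global termination measure by inducting directly on the length $r$: it assumes $w'=h_{r-1}\cdots h_1$ already has a standard form $\delta_+^{c'_+}\delta_-^{c'_-}u_{III}u_{II}u_I$ and then does a case analysis on $h_r\in\{\delta_\pm,\beta,\tau,\alpha\}$, pushing $h_r$ rightward through the existing blocks using relations (4)--(6) and Proposition~\ref{additional}. The delicate $\alpha$ case splits into five subcases according to whether the $\alpha$ hits an identity, a $\delta_\pm$, a $\tau^{\pm1}$, passes all the way through, or stalls at a final $\sigma_\pm$. This organization makes the $[0\pm]$ bookkeeping (your anticipated ``main obstacle'') automatic, since one never leaves standard form except at the single leftmost letter; the trade-off is that your rewriting formulation is more uniform and would adapt more readily to a confluence/normal-form argument if uniqueness were being proved directly.
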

\begin{proof}
Note that it suffices to find $v_i\in \W_{i}$ for $i\in\{I,II,III\}$ and $c_\pm\geq 0$ such that $w=\delta_+^{c_+}c_-^{c_-}v_{III}v_{II}v_I$, as we may then set $w_{II}=v_{II}$ and apply Proposition \ref{alpha} to $v_I$ and $v_{III}^*$ to get $w_I$ and $w_{III}^*$ respectively. We induct on $r$. The case $r=1$ is trivial. Suppose $r>1$ and the result holds for all words of shorter length. Apply the induction hypothesis to $w'=h_{r-1}\cdots h_1$ to get
$$
w'=\delta_+^{c'_+}\delta_-^{c'_-}u_{III}u_{II}u_I.
$$
There are $6$ cases.
\item[(D)] Suppose $h_r=\delta_\pm$. Set $c_\pm=c'_\pm+1$, $c_\mp=c'_\mp$, and $v_i=u_i$ for all $i\in\{I,II,III\}$. We are finished.
\item[(B)] Suppose $h_r\in B$. Set $c_\pm=c'_\pm$ and $v_i=u_i$ for $i\in\{I,II,III\}$. We are finished.
\item[(T)] Suppose $h_r=\tau$. Set $c_\pm=c'_\pm$ and $w_I=u_I$. As we push $\tau$ right using relation (5) and Proposition \ref{additional}, only two extraordinary possibilities occur:
\be
\itt{Case 1} $\tau$ meets $\beta_{2n+1}$ or $\beta_{2n+2}$ in $\aD(n,n+1)$, so $\tau$ disappears when using Proposition \ref{additional}, or
\itt{Case 2} $\tau$ meets $\beta_1\in\aD(0+,1)$ or $\beta_2\in\aD(0-,1)$,  so $\tau$ disappears as $\id_{[1]}=\tau\in\aD(1,1)$.
\ee
Hence we get that $w=v_{III}'\tau^s$ where $v_{III}\in\W_{III}$ and $s\in\{0,1\}$. If $s=0$, set $v_{II}=u_{II}$, and if $s=1$, set $v_{II}=\tau u_{II}$. We are finished.
\item[(A)] Suppose $h_r=\alpha_q$ for some $q\in\N$. Use relation (6) to push $\alpha_q$ to the right of the $\beta$'s. There are five cases.
\be
\itt{Case 1} We use the relation $\alpha_i\beta_{j}=\tau^{\pm 1}$. Arguing as in Case (T) we are finished.
\itt{Case 2} We use the relation $\alpha_i\beta_{i\pm1}=\id_{[k]}$ for some $k\in\N\cup\{0\pm\}$, so $\alpha_qu_{III}=v_{III}$ for some $v_{III}\in\W_{III}$. Set $c_\pm=c'_\pm$ and $v_i=u_i$ for $i\in\{I,II\}$. We are finished.
\itt{Case 3} We use the relation $\alpha_i\beta_{i}=\delta_\pm$, so $\alpha_q u_{III}=\delta_\pm v_{III}$ for some $v_{III}\in \W_{III}$. Set $c_\pm=c'_\pm+1$, $c_\mp=c'_\mp$, and $v_i=u_i$ for $i\in\{I,II\}$. We are finished.
\itt{Case 4} $\alpha_q$ can be pushed all the way to the right of $u_{III}$ to obtain $\alpha_q u_{III}=v_{III}\alpha_p$ for some $p\in\N$ and $v_{III}\in\W_{III}$. Then necessarily $u_{II}=\tau^s$ for some $s\in\Z_{\geq 0}$, so we use relation (4) and \ref{additional} to push $\alpha_p$ to the right of the $\tau$'s. Hence we obtain $\alpha_p u_{II}=v_{II}\alpha_{k}$ for some $k\in\N$ and $v_{II}\in\W_{II}$. Set $c_\pm=c'_\pm$ and $v_I=\alpha_ku_I$. We are finished. 
\itt{Case 5} $\alpha_q$ can be pushed all the way to the right except for the last $\beta_i$. This means $\alpha_q u_{III}=v_{III} \alpha_i \beta_j$ for some $v_{III}\in\W_{III}$ where $\alpha_i\beta_j=\sigma_\pm$. Set $v_{II}=\sigma_\pm u_{II}$, $c_\pm=c'_\pm$, and $v_{I}=u_I$. We are finished.
\ee
\end{proof}

\begin{defn}\label{standardform}
If $w\in\Mor(\aD)$, a decomposition of $w$ as in Theorem \ref{standard} is called a standard form of $w$.
\end{defn}

\begin{rem}
It will be a consequence of Theorem \ref{iso} that a word $w\in\aD$ has a unique standard form.
\end{rem}

%%%%%%%%%%%%%%%%%%%%%%%%%%%%%%%%%%%%%%%%%%%%%%%%%%%
%%%%%%%%%%%%%%%%%%%%%%%%%%%%%%%%%%%%%%%%%%%%%%%%%%%
%%%%%%%%%%%%%%%%%%%%%%%%%%%%%%%%%%%%%%%%%%%%%%%%%%%
\section{The Isomorphism of Categories $\aD\cong\Atl$}\label{isocat}

\begin{prop} The following defines an involutive functor $F\colon \aD \to \Atl$:
\be
\itt{Objects}
$F([n])=[n]$ for all $n\in \N\cup \{0\pm\}$,
\itt{Morphisms}
\be
\item[(A)] 
Set $F(\alpha_i)=a_i$,
\item[(B)]
Set $F(\beta_i)=b_i$, 
\item[(T)]
Set $F(\tau)=t$, and
\item[(D)]
Set $F(\delta_+\in\aD(n,n))=(\id_{[n]},1,0)$ and $F(\delta_-\in\aD(n,n))=(\id_{[n]},0,1)$ for $n\in\N\cup\{0\pm\}$.
\ee
\ee
\end{prop}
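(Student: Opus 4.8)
The plan is to invoke the universal property of a category presented by generators and relations. Since $\aD$ is defined in Section~\ref{aD} as the category with objects $[n]$, $n\in\N\cup\{0\pm\}$, freely generated by the symbols $\alpha_i$, $\beta_i$, $\tau$, $\delta_\pm$ subject to the relations (1)--(7), a functor out of $\aD$ is uniquely determined by, and exists for, any assignment of objects to objects and generators to morphisms which respects sources and targets and sends the defining relations to valid identities in the target category. So first I would record that the source/target conditions hold on the nose: $F([n])=[n]$, and each generator is sent to a morphism of $\Atl$ between the images of its source and target (e.g.\ $\alpha_1\colon[1]\to[0+]$ goes to $a_1\in\Atl(1,0+)$, $\alpha_i\colon[n]\to[n-1]$ goes to $a_i\in\Atl(n,n-1)$ for $n\geq2$, and similarly for the $\beta_i$, $\tau$, $\delta_\pm$).

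The one substantive point is that the images satisfy the seven defining relations of $\aD$. But these relations, imposed on $\alpha_i$, $\beta_i$, $\tau$, $\delta_\pm$, are literally the relations (1)--(7) of Theorem~\ref{Atlrelations} under the substitution $\alpha_i\mapsto a_i$, $\beta_i\mapsto b_i$, $\tau\mapsto t$, $\delta_+\mapsto(\id_{[n]},1,0)$, $\delta_-\mapsto(\id_{[n]},0,1)$ (the presentation of $\aD$ was set up so that this is so). Thus Theorem~\ref{Atlrelations} is exactly the assertion that the images of the generators satisfy the needed identities in $\Atl$, and the universal property then produces a well-defined functor $F\colon\aD\to\Atl$.

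It remains to check that $F$ is involutive, i.e.\ $F(w^*)=F(w)^*$ for every $w\in\Mor(\aD)$. Here I would use that the involution on $\aD$ is characterized by its values on primitive morphisms together with the rule $(h_r\cdots h_1)^*=h_1^*\cdots h_r^*$, while the involution on $\Atl$ satisfies $(S\circ T)^*=T^*\circ S^*$. Writing $w=h_r\cdots h_1$ as a word of primitives and using that $F$ is a functor, both $F(w^*)$ and $F(w)^*$ reduce to $F(h_1^*)\circ\cdots\circ F(h_r^*)$ and $F(h_1)^*\circ\cdots\circ F(h_r)^*$ respectively, so it suffices to verify $F(h^*)=F(h)^*$ for each primitive $h$. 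This is a direct comparison of the definition of $*$ on primitives of $\aD$ with Proposition~\ref{Atlinvolution}: $F(\alpha_i^*)=F(\beta_i)=b_i=a_i^*=F(\alpha_i)^*$, and symmetrically $F(\beta_i^*)=F(\alpha_i)=a_i=b_i^*=F(\beta_i)^*$; $F(\tau^*)=F(\tau^{-1})=t^{-1}=t^*=F(\tau)^*$; $F(\delta_\pm^*)=F(\delta_\pm)=F(\delta_\pm)^*$; and $F(\id_{[n]}^*)=\id_{[n]}=F(\id_{[n]})^*$.

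I do not expect a genuine obstacle: the whole argument is bookkeeping resting on Theorem~\ref{Atlrelations} and Proposition~\ref{Atlinvolution}. The only place to be careful is to confirm that every relation appearing in the presentation of $\aD$ --- with its exact index restrictions and exceptional cases --- is among those established in Theorem~\ref{Atlrelations}; since the two lists were written to coincide term by term, this is immediate, and the same remark makes the comparison with Proposition~\ref{Atlinvolution} routine.
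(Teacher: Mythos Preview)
Your proposal is correct and is essentially the paper's own argument, spelled out in more detail: the paper's proof simply says that functoriality follows from Theorem~\ref{Atlrelations} and that involutivity is clear from Proposition~\ref{Atlinvolution}, which is exactly what you unpack.
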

\begin{proof}
We must check that $F(\id_{[n]})=\id_{[n]}$ for all  $n\in\N\cup\{0\pm\}$ and that $F$ preserves composition, but both these conditions follow from Theorem \ref{Atlrelations}. It is clear $*$ preserves the involution by Proposition \ref{Atlinvolution}.
\end{proof}

\begin{rem}
We construct a functor $G\colon \Atl\to \aD$ as follows: we create a function $G\colon \Atl\to \aD$ taking objects to objects (this part is easy as objects in both categories have the same names) and $\Atl(m,n)\to \aD(m,n)$ bijectively such that $F\circ G = \id_{\Atl}$. It will follow immediately that $G$ is a functor and $G\circ F=\id_{\aD}$.
\end{rem}

\begin{thm}\label{bijective} Let $m,n\in\N\cup\{0\pm\}$. Then $F_i=F|_{W_i(m,n)}\colon\W_i(m,n)\to \T_i(m,n)$ is bijective for all $i\in\{I,II,III\}$, i.e. there is a bijective correspondence between words of Type $i$ and Atl tangles of Type $i$ for all $i\in\{I,II,III\}$.
\end{thm}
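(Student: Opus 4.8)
The plan is to establish the three bijections $F_I$, $F_{II}$, $F_{III}$ one at a time, reducing Type~III to Type~I via the involution and treating Types~I and~II directly from the classification results already proved. First I would check that $F_i$ really does send $\W_i(m,n)$ into $\T_i(m,n)$: a Type~I word is a composite of $\alpha_i$'s (or a length-zero word), so $F$ sends it to a composite of $a_i$'s, which lies in $\T_I$ by Corollary~\ref{typecor}; the Type~III case is symmetric; and for Type~II one uses $F(\tau)=t$ together with $F(\sigma_+)=a_2b_1=s_+$, $F(\sigma_-)=a_1b_2=s_-$ and the explicit list of Type~II tangles in the proof of Proposition~\ref{capindex}.

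For Type~I, surjectivity of $F_I$ is essentially Algorithm~\ref{readtangle}: given $T\in\T_I(m,n)$ with $m\geq 1$, the algorithm writes $T$ as a composite of $a_i$'s, hence as $F$ of the corresponding Type~I word, and the cases $m=0\pm$ are trivial since only the identity occurs on either side. Injectivity is the crux. The key point is that $F$ restricts to a bijection from irreducible Type~I words onto irreducible Type~I tangles: this restriction is onto by Remark~\ref{adecomp}, which writes an arbitrary irreducible tangle as a composite of $a_i$'s whose subscript pattern is exactly the one in Definition~\ref{irredalpha}, i.e.\ as $F$ of an irreducible word; and it is injective because Remark~\ref{adecomp} also expresses the cap indices of such a composite invertibly in terms of the subscript data, while by Proposition~\ref{capindex} an irreducible tangle is determined by its cap indices --- so $F(w)=F(w')$ with $w,w'$ irreducible forces the same subscript data, hence $w=w'$. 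Granting this, if $F(w)=F(w')=:T$ with $w,w'\in\W_I(m,n)$, I would choose standard decompositions $w=w_r\cdots w_1$ and $w'=w'_s\cdots w'_1$ into irreducibles (Proposition~\ref{alpha}); then $F(w_r)\circ\cdots\circ F(w_1)$ and $F(w'_s)\circ\cdots\circ F(w'_1)$ are both decompositions of $T$ into irreducible tangles, so the uniqueness clause of Proposition~\ref{a} gives $r=s$ and $F(w_i)=F(w'_i)$ for every $i$, whence $w_i=w'_i$ and $w=w'$. (This incidentally shows the standard decomposition of a Type~I word is unique, though we will not need that.)

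Type~III is then formal: $T\mapsto T^*$ and $w\mapsto w^*$ are bijections interchanging $\T_I$ with $\T_{III}$ and $\W_I$ with $\W_{III}$ (Lemma~\ref{typeinvolution} and its word-side analogue), and since $F$ is involutive, $F_{III}$ is conjugate to $F_I$ and therefore bijective. For Type~II, both $\W_{II}(m,n)$ and $\T_{II}(m,n)$ are given by explicit lists --- from the definition of a Type~II word, respectively from the case analysis in the proof of Proposition~\ref{capindex} --- and $F_{II}$ matches them term by term: $\tau^k\mapsto t^k$ for $m=n\in\N$, $(\sigma_\mp\sigma_\pm)^k\mapsto(s_\mp s_\pm)^k$ for $m=n=0\pm$, and $(\sigma_\pm\sigma_\mp)^k\sigma_\pm\mapsto(s_\pm s_\mp)^k s_\pm$ for $m=0\pm$, $n=0\mp$. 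Surjectivity of $F_{II}$ is immediate, and injectivity follows because $\rel_*$ takes pairwise distinct values on the tangles in each list (Proposition~\ref{capindex}), so no two of them are hit by the same word. I expect the main obstacle to be the Type~I injectivity step, and inside it the bookkeeping confirming that the index constraints of Definition~\ref{irredalpha} coincide with those of Remark~\ref{adecomp} and that the cap-index computation there can be run backwards; everything else is either formal or a direct appeal to an earlier result.
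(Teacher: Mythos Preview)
Your proposal is correct and follows essentially the same approach as the paper: both arguments reduce Type~III to Type~I via the involution, handle Type~II by the explicit enumeration in Proposition~\ref{capindex}, and for Type~I use the correspondence between irreducible decompositions of words (Proposition~\ref{alpha}) and of tangles (Proposition~\ref{a}). The only cosmetic difference is that the paper packages the Type~I argument by explicitly building the inverse map $G_I$ (reading off the $\alpha$-word from the unique $a$-decomposition of Proposition~\ref{a}) and checking $F_I\circ G_I=\id$ together with surjectivity of $G_I$, whereas you prove injectivity and surjectivity of $F_I$ separately; the content is the same.
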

\begin{proof} 
\itt{Type I} Note that $\im(F_I)\subset \T_I(m,n)$. We construct the inverse $G_I$ for $F_I$. Note that by Proposition \ref{a}, each $T\in\T_I(m,n)$ can be written uniquely as $T=W_r\cdots W_1$, which can further be expanded as
$$T=\underbrace{a_{i_p}\cdots a_{i_1}}_{W_r}\underbrace{a_{j_q}\cdots a_{j_1}}_{W_2} \cdots\underbrace{a_{k_r}\cdots a_{k_1}}_{W_1}$$ 
satisfying \ref{a}. Set
$$
G_I(T)=\alpha_{i_p}\cdots \alpha_{i_1}\cdots \alpha_{j_q}\cdots \alpha_{j_1}\alpha_{k_r}\cdots \alpha_{k_1}.
$$
It follows $F_I\circ G_I=\id$. Now by Proposition \ref{alpha}, every word of Type I can be written in this form. Hence we see $G_I$ is in fact the inverse of $F_I$
\itt{Type II} Obvious.
\itt{Type III}. From the Type I case and the involutions in $\aD$ and $\Atl$, we have the following bijections:
$$\T_{III}(m,n)\longleftrightarrow \T_{I}(n,m)\longleftrightarrow \W_I(n,m)\longleftrightarrow\W_{III}(m,n).$$
\end{proof}

\begin{defn}
We define $G\colon \Atl\to \aD$ as follows:
\itt{Objects} $G([n])=[n]$ for all $n\in\N\cup\{0\pm\}$.
\itt{Morphisms} First define $G(T,0,0)$ for a $T\in\T_i$ for $i\in\{I,II,III\}$ by the bijective correspondences given in Theorem \ref{bijective}. Then for an arbitrary Atl $(m,n)$-tangle $T$, define $G(T,0,0)$ by
$$G(T,0,0)=G(T_{III},0,0)\circ G(T_{II},0,0)\circ G(T_{I},0,0)$$
where $T_i$ for $i\in\{I,II,III\}$ is defined for $T$ as in the Atl Decomposition Theorem \ref{decomposition}. Finally, define $G(T,c_+,c_-)=\delta_+^{c_+}\delta_-^{c_-}G(T,0,0)$ for an arbitrary morphism $(T,c_+,c_-)\in \Mor(\Atl)$. Note that $G$ is well defined by the uniqueness part of \ref{decomposition}. 
\end{defn}

\begin{prop} If $T$ is an Atl $(m,n)$-tangle of Type $i$ for $i\in\{I,II,III\}$, then $F\circ G(T)=T$.
\end{prop}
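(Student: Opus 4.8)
The plan is to observe that this proposition simply unwinds the definition of $G$ on tangles of a fixed type and then quotes the bijectivity already proved in Theorem \ref{bijective}. First I would recall that, by the definition of $G$ given just above, for an Atl $(m,n)$-tangle $T$ of Type $i$ the morphism $G(T)=G(T,0,0)$ is by fiat the image of $T$ under the inverse of the bijection $F_i=F|_{\W_i(m,n)}\colon\W_i(m,n)\to\T_i(m,n)$ from Theorem \ref{bijective}; write $G_i$ for this inverse, so that $G(T)=G_i(T)\in\W_i(m,n)$ and hence $F\circ G(T)=F(G_i(T))=F_i(G_i(T))$.

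For $i=I$ I would conclude immediately from Theorem \ref{bijective}, whose proof exhibits $G_I$ as a two-sided inverse of $F_I$, so $F_I(G_I(T))=T$; the case $i=II$ is the same (there the bijection is the obvious one). For $i=III$ I would recall that the proof of Theorem \ref{bijective} realizes $G_{III}$ as the composite of the bijections $T\mapsto T^*$ (from $\T_{III}(m,n)$ to $\T_I(n,m)$), $G_I$, and $w\mapsto w^*$, i.e. $G_{III}(T)=\bigl(G_I(T^*)\bigr)^*$. Since $F$ is an involutive functor, applying $F$ gives
\[
F\bigl(G_{III}(T)\bigr)=F\bigl((G_I(T^*))^*\bigr)=\bigl(F(G_I(T^*))\bigr)^*=(T^*)^*=T,
\]
where the middle equality uses that $F$ commutes with the involution and the third uses the Type I identity $F_I\circ G_I=\id$ applied to $T^*\in\T_I(n,m)$.

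There is no real obstacle here; the only point requiring care is the Type III case, where one must invoke that $F$ respects the involutions on $\aD$ and $\Atl$ (the earlier proposition that $F$ is an involutive functor) in order to pull $F$ through the two applications of $*$. Everything else is bookkeeping with the definitions of $F$ and $G$ together with Theorems \ref{bijective} and \ref{decomposition}.
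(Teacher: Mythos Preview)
Your proposal is correct and is essentially the same as the paper's one-line proof, which simply says the result is ``immediate from the definition of $G$ and Theorem \ref{bijective}.'' Your Type III computation via the involution is correct but strictly more than needed: once Theorem \ref{bijective} asserts that $F_{III}$ is a bijection and $G$ is \emph{defined} on $\T_{III}(m,n)$ as its inverse, the identity $F\circ G(T)=F_{III}\circ F_{III}^{-1}(T)=T$ is tautological without revisiting how the inverse was constructed.
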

\begin{proof}
This is immediate from the definition of $G$ and Theorem \ref{bijective}.
\end{proof}

\begin{cor}\label{injective}
$F\circ G=\id_{\Atl}$, so $G$ restricted to $\Atl(m,n)$ is injective into $\aD(m,n)$ for all $m,n\in\N\cup\{0\pm\}$.
\end{cor}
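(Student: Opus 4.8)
The plan is to bootstrap from the preceding Proposition, which already establishes $F\circ G(T)=T$ for pure-type tangles, using nothing beyond functoriality of $F$, the explicit recipe defining $G$, and the composition law of $\Atl$. First I would dispose of a morphism of the form $(T,0,0)$ for an arbitrary Atl $(m,n)$-tangle $T$. By the Atl Tangle Decomposition Theorem \ref{decomposition} we may write $T=T_{III}\circ T_{II}\circ T_I$ with $T_i\in\T_i$, and by the definition of $G$ we have $G(T,0,0)=G(T_{III},0,0)\circ G(T_{II},0,0)\circ G(T_I,0,0)$. Applying the functor $F$ and invoking the preceding Proposition on each of the three factors gives $F\circ G(T,0,0)=T_{III}\circ T_{II}\circ T_I=(T,0,0)$, the last equality being Theorem \ref{decomposition} once more.

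Next I would handle the coupling constants. For an arbitrary $(T,c_+,c_-)\in\Mor(\Atl)$ with codomain $[n]$, the definition of $G$ gives $G(T,c_+,c_-)=\delta_+^{c_+}\delta_-^{c_-}\,G(T,0,0)$, where the $\delta_\pm$ lie in $\aD(n,n)$. Applying $F$, using $F(\delta_+)=(\id_{[n]},1,0)$ and $F(\delta_-)=(\id_{[n]},0,1)$, and then the composition rule of $\Atl$ (composing identity tangles creates no contractible loops and the constants add), yields
\[
F\circ G(T,c_+,c_-)=(\id_{[n]},c_+,c_-)\circ(T,0,0)=(T,c_+,c_-).
\]
Hence $F\circ G=\id_{\Atl}$. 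The injectivity of $G$ on each hom-set is then the purely formal consequence that a map with a left inverse is injective: if $\varphi,\psi\in\Atl(m,n)$ satisfy $G(\varphi)=G(\psi)$, then $\varphi=F(G(\varphi))=F(G(\psi))=\psi$.

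I expect no serious obstacle here; the corollary is essentially bookkeeping once the preceding Proposition is in hand. The two points that warrant a moment's care are (i) checking that the composite $T_{III}\circ T_{II}\circ T_I$, formed inside $\Atl$, introduces no contractible loops, so that it is literally $(T,0,0)$ rather than $(T,d_+,d_-)$ with $d_\pm>0$ — this is immediate from the shape of the decomposition in Theorem \ref{decomposition} — and (ii) keeping track that the leftmost $\delta_\pm$ factors in the definition of $G(T,c_+,c_-)$ sit over the codomain $[n]$, so that their $F$-images compose in $\Atl$ to $(\id_{[n]},c_+,c_-)$. Both are routine consequences of the definitions already in place.
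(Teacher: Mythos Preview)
Your proposal is correct and follows exactly the approach indicated in the paper's one-line proof (``This follows immediately from Theorem \ref{decomposition} and the definition of $G$ as $F$ is a functor''); you have simply unpacked the details that the paper leaves implicit. The two care points you flag---that the composite $T_{III}\circ T_{II}\circ T_I$ yields no contractible loops, and that the $\delta_\pm$ sit over the correct object---are precisely the routine checks hidden behind the word ``immediately.''
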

\begin{proof}
This follows immediately from Theorem \ref{decomposition} and the definition of $G$ as $F$ is a functor.
\end{proof}

\begin{prop}\label{surjective} $G$ restricted to $\Atl(m,n)$ is surjective onto $\aD(m,n)$.
\end{prop}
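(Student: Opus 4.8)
The plan is to prove the stronger fact that $G\circ F=\id_{\aD}$ on morphisms; then for any $w\in\aD(m,n)$ we have $w=G(F(w))$ with $F(w)\in\Atl(m,n)$, so $G|_{\Atl(m,n)}$ is surjective onto $\aD(m,n)$. Fix $w\in\aD(m,n)$. By the Standard Forms Theorem \ref{standard} we may write
$$w=\delta_+^{c_+}\delta_-^{c_-}\,w_{III}\,w_{II}\,w_{I},\qquad w_i\in\W_i,\quad c_\pm\geq 0,$$
with $w_I$ and $w_{III}^*$ in the canonical form afforded by Proposition \ref{alpha}. Since $F$ is an involutive functor with $F(\delta_+)=(\id,1,0)$ and $F(\delta_-)=(\id,0,1)$, applying $F$ gives
$$F(w)=(\id_{[n]},c_+,c_-)\circ F(w_{III})\circ F(w_{II})\circ F(w_{I}),$$
and by Corollary \ref{typecor} together with the (obvious) Type II statement, $F(w_i)\in\T_i$ for $i\in\{I,II,III\}$.

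The key geometric observation is that stacking tangles in the order $F(w_{III})\circ F(w_{II})\circ F(w_{I})$ never produces a contractible loop: a tangle in $\T_I$ consists only of caps and through strings (which stay near the inner disk), a tangle in $\T_{III}$ only of cups and through strings (near the outer disk), and a tangle in $\T_{II}$ is a power of the rotation or a $(0\pm,0\pm)$-tangle built from non-contractible loops; when these are composed the caps and cups survive untouched, the through strings merely concatenate, and the only loops of the result are the (non-contractible) loops coming from the $\T_{II}$ piece. Hence $F(w)=(T,c_+,c_-)$ where $T:=F(w_{III})\circ F(w_{II})\circ F(w_{I})$, and this exhibits $T$ as a composite $T_{III}\circ T_{II}\circ T_I$ with each $T_i:=F(w_i)\in\T_i$. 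By the uniqueness part of the Atl Tangle Decomposition Theorem \ref{decomposition}, this is precisely the Atl decomposition of $T$ used in the definition of $G$.

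It remains to unwind the definition of $G$, which gives
$$G(F(w))=\delta_+^{c_+}\delta_-^{c_-}\,G(T_{III},0,0)\circ G(T_{II},0,0)\circ G(T_I,0,0),$$
and to identify $G(T_i,0,0)=w_i$. For $i=I$: the functor $F$ carries the irreducible factorization $w_I=W_r\cdots W_1$ of Proposition \ref{alpha} to a factorization $T_I=F(W_r)\circ\cdots\circ F(W_1)$ into irreducible Atl tangles (the index conditions of Definition \ref{irredalpha} match those defining an irreducible tangle), which by Proposition \ref{a} is the \emph{unique} such factorization, so the word produced by $G_I$ in the proof of Theorem \ref{bijective} is exactly $w_I$. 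For $i=III$: apply the involution, $T_{III}^*=F(w_{III}^*)$ with $w_{III}^*$ in Proposition \ref{alpha} form, whence $G_I(T_{III}^*)=w_{III}^*$ and $G_{III}(T_{III})=w_{III}$. For $i=II$: the Type II bijection of Theorem \ref{bijective} gives $G_{II}(T_{II})=w_{II}$ at once. Therefore $G(F(w))=w$.

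I expect the main obstacle to be the middle step: pinning down rigorously that no contractible loops arise in $F(w_{III})\circ F(w_{II})\circ F(w_{I})$ and thereby certifying, via uniqueness in Theorem \ref{decomposition}, that the three factors $F(w_i)$ really are the Atl-decomposition pieces of $F(w)$. Once that is secured, the identifications $G(T_i,0,0)=w_i$ are a bookkeeping consequence of the canonical forms already built into Theorems \ref{standard} and \ref{bijective}, and surjectivity of $G|_{\Atl(m,n)}$ follows.
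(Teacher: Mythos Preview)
Your proof is correct and follows essentially the same route as the paper's: put $w$ in standard form $\delta_+^{c_+}\delta_-^{c_-}w_{III}w_{II}w_I$, let $T_i=F(w_i)\in\T_i$, observe that $T=T_{III}\circ T_{II}\circ T_I$ is the Atl decomposition of $T$, and read off $G(T,c_+,c_-)=w$ from the definition of $G$ and the bijections of Theorem \ref{bijective}. The only real difference is cosmetic: you package the argument as $G\circ F=\id_{\aD}$ and you make explicit the ``no contractible loops'' step that the paper leaves implicit in its appeal to Theorem \ref{decomposition}.
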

\begin{proof}
We have that every word $w\in\Mor(\aD)$ is equal to a word $\delta_+^{c_+}\delta_-^{c_-}w_{III}w_{II}w_{I}$ in standard form where $w_i$ is of Type $i$ for $i\in\{I,II,III\}$. By \ref{bijective} there are unique Atl tangles $T_{i}$ of Type $i$ such that $w_{i}=G(T_{i})$ for all $i\in\{I,II,III\}$. Set $T=T_{III}\circ T_{II}\circ T_{I}$, and note this decomposition into a composite of Atl tangles of Types I, II, and III is unique by \ref{decomposition}. It follows that 
$$G(T,c_+,c_-)=\delta_+^{c_+}\delta_-^{c_-}w_{III}w_{II}w_{I}=w$$
by the definition of $G$.
\end{proof}

\begin{thm}\label{iso}
$F\colon\aD\to \Atl$ is an isomorphism of involutive categories. Hence $\aD$ is a presentation of $\Atl$ via generators and relations.
\end{thm}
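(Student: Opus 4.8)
The plan is to deduce the theorem almost formally from the two facts already established about the candidate inverse $G$: that $F\circ G=\id_{\Atl}$ (Corollary~\ref{injective}) and that $G$ is surjective on each hom-set (Proposition~\ref{surjective}). No new diagrammatic work should be needed; the substantive content has already been absorbed into the Atl Decomposition Theorem~\ref{decomposition}, the Standard Forms Theorem~\ref{standard}, and the type-by-type bijections of Theorem~\ref{bijective}, together with the verification (when $F$ was defined at the start of this section) that $F$ is an involutive functor.

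First I would fix $m,n\in\N\cup\{0\pm\}$ and examine the restriction $G\colon\Atl(m,n)\to\aD(m,n)$. From $F\circ G=\id_{\Atl}$ this map is injective (this is exactly Corollary~\ref{injective}), and Proposition~\ref{surjective} says it is surjective; hence $G$ is a bijection of sets $\Atl(m,n)\to\aD(m,n)$ for every pair $m,n$. On objects $G$ is a bijection by inspection (the identity on names). So $G$ is a bijective-on-objects, fully faithful-in-the-bijective-sense correspondence.

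Next I would observe that a map which composes to the identity against a functor and is bijective on all hom-sets must be the set-theoretic inverse of that functor. Concretely, since $F\circ G=\id_{\Atl}$ and $G|_{\Atl(m,n)}$ is bijective, post-composing the identity $F\circ G=\id$ with $(G|_{\Atl(m,n)})^{-1}$ gives $F|_{\aD(m,n)}=(G|_{\Atl(m,n)})^{-1}$; thus $F$ is itself bijective on objects and on every hom-set, and $G\circ F=\id_{\aD}$. The set-theoretic inverse of a functor is automatically a functor, so $G$ is a functor and $F$ is an isomorphism of categories with inverse $G$. Finally, $F$ preserves the involution (recorded when $F$ was introduced, via Proposition~\ref{Atlinvolution}), so this is an isomorphism of involutive categories, which is precisely the claim; it justifies regarding $\aD$ as a presentation of $\Atl$ by generators and relations.

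I do not expect a genuine obstacle here. The one point worth stating carefully is that we never verify directly that $G$ respects composition: it is handed to us as a bijection that is a one-sided inverse of the functor $F$, and that alone forces it to be the two-sided inverse, hence a functor. As a byproduct, combining $G\circ F=\id_{\aD}$ with Theorem~\ref{standard} (and the fact, via Remark~\ref{typerem} and Corollary~\ref{typecor}, that the pieces land in the right types, whose decomposition is unique by Theorem~\ref{decomposition}) yields that every word on $\aD$ has a \emph{unique} standard form, which is the content of the remark following Definition~\ref{standardform}.
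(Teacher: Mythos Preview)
Your proposal is correct and takes essentially the same approach as the paper, whose proof simply reads ``Obvious from Corollary~\ref{injective} and Proposition~\ref{surjective}.'' You have just spelled out the formal bookkeeping that makes those two facts suffice (bijectivity of $G$ on hom-sets forces $G\circ F=\id_{\aD}$, and the set-theoretic inverse of a functor is a functor), and your closing remark anticipates precisely the paper's next corollary on uniqueness of standard forms.
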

\begin{proof}
Obvious from Corollary \ref{injective} and Proposition \ref{surjective}.
\end{proof}

\begin{cor}\label{standardunique}
Each word $w\in\Mor(\aD)$ has a unique standard form.
\end{cor}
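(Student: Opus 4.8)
The plan is to leverage Theorem \ref{iso}. Existence of a standard form is Theorem \ref{standard}, so only uniqueness is at issue. Since $F\colon\aD\to\Atl$ is an isomorphism with inverse $G$, we have $w = G(F(w))$ for every $w\in\Mor(\aD)$, and the word $G(F(w))$ is, by the very definition of $G$, already displayed in standard form. Hence the real content is to show that \emph{every} standard form of $w$ must coincide, \emph{as a word}, with this canonical word $G(F(w))$; uniqueness then follows because $G(F(w))$ depends only on $w$.

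So suppose $w = \delta_+^{c_+}\delta_-^{c_-}w_{III}w_{II}w_I$ is any standard form, with $w_i\in\W_i$ and $w_I$, $w_{III}^*$ in the shape afforded by Proposition \ref{alpha}. First I would apply $F$. By Remark \ref{typerem} and Corollary \ref{typecor}, each $F(w_i)$ is a Type $i$ Atl tangle carried with zero loop counts; the images $F(\delta_\pm)$ are the loop-decorations $(\id,1,0)$, $(\id,0,1)$, which commute with everything by Theorem \ref{Atlrelations}(7) and whose composites with identity tangles create no contractible loops. The one nonformal point is that composing $F(w_{III})\circ F(w_{II})\circ F(w_I)$ in $\Atl$ also creates no contractible loops; but this is forced by $F\circ G=\id_{\Atl}$ (Corollary \ref{injective}), since otherwise $G$ applied to $F(w)$ would reintroduce extra $\delta_\pm$'s and contradict $G\circ F = \id_{\aD}$. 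Hence
$$
F(w) = \bigl(F(w_{III})\circ F(w_{II})\circ F(w_I),\,c_+,\,c_-\bigr),
$$
an honest Atl tangle equipped with the counts $c_+,c_-$.

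Next I would read off the pieces. By the Atl Tangle Decomposition Theorem \ref{decomposition}, the underlying tangle has a \emph{unique} expression as a composite of tangles of Types III, II, I, so that composite must be $F(w_{III})\circ F(w_{II})\circ F(w_I)$; thus the tangles $F(w_i)$ and the integers $c_\pm$ are determined by $w$ alone. Feeding $F(w)$ back through the definition of $G$ then gives
$$
G(F(w)) = \delta_+^{c_+}\delta_-^{c_-}\,G_{III}\bigl(F(w_{III})\bigr)\,G_{II}\bigl(F(w_{II})\bigr)\,G_I\bigl(F(w_I)\bigr),
$$
where $G_i = F_i^{-1}$ is the bijection of Theorem \ref{bijective}. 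Since $w_i$ is already in the canonical shape output by $G_i$ — a standard decomposition into irreducibles for $i=I$ (Propositions \ref{a} and \ref{alpha}), the image under $*$ of such a decomposition for $i=III$, and a normal form of a Type II word for $i=II$ — and since that shape is the unique word of its kind representing the given morphism, we get $G_i(F(w_i)) = w_i$ as words, whence $G(F(w)) = \delta_+^{c_+}\delta_-^{c_-}w_{III}w_{II}w_I$ and the standard form is unique.

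The step I expect to be the main obstacle is this last one: passing from equality of morphisms $G_i(F(w_i)) = w_i$ to equality as literal words. For Type I this requires that a Type I morphism has a \emph{unique} standard decomposition into irreducibles, which is where Proposition \ref{a} (transported through $G_I$, which by Remark \ref{adecomp} carries an irreducible tangle to an irreducible word with matching cap indices) does the real work; the Type III case reduces to this via Lemma \ref{typeinvolution}, and the Type II case is the elementary observation (Proposition \ref{capindex}) that the listed normal forms of Type II words are distinguished by the relative star positions of their images.
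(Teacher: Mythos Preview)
Your argument is essentially the paper's: transport through the isomorphism $F$ of Theorem~\ref{iso}, use the uniqueness of the Type decomposition (Theorem~\ref{decomposition}) to pin down $c_\pm$ and the tangles $F(w_i)$, and then invoke Proposition~\ref{a} (via Remark~\ref{adecomp}) to recover each $w_i$ uniquely as a word. The paper's three-line proof is terser but rests on exactly the same ingredients.

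One genuine wrinkle: your justification that composing $F(w_{III})\circ F(w_{II})\circ F(w_I)$ creates no contractible loops is circular as written. You argue that otherwise $G(F(w))$ would carry extra $\delta_\pm$'s, contradicting $G\circ F=\id_{\aD}$; but $G\circ F=\id_{\aD}$ is an equality of \emph{morphisms}, and concluding that two standard-form words for the same morphism must have equal $\delta$-exponents is precisely the uniqueness you are trying to establish. The fact you need is immediate from the definitions of the Types: every string of a Type~I tangle meeting $D_0$ is a through string, every string of a Type~III tangle meeting $D_1$ is a through string, and a Type~II tangle has only through strings and non-contractible loops, so no contractible closed curve can form at either seam. (Alternatively, you can extract it non-circularly from $F\circ G=\id_{\Atl}$ alone, applied to the Atl tangle obtained after removing any putative loops, together with the uniqueness half of Theorem~\ref{decomposition}.) The paper's proof leaves this point implicit as well.
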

\begin{proof}
Each Atl tangle has a unique decomposition as $T_{III}\circ T_{II}\circ T_I$. Note $T_{III}^*$ and $T_I$ have unique decompositions as in Proposition \ref{a} which correspond under the isomorphism of categories to decompositions as in Proposition \ref{alpha}. We are finished.
\end{proof}

%%%%%%%%%%%%%%%%%%%%%%%%%%%%%%%%%%%%%%%%%%%%%%%%%%%
%%%%%%%%%%%%%%%%%%%%%%%%%%%%%%%%%%%%%%%%%%%%%%%%%%%
%%%%%%%%%%%%%%%%%%%%%%%%%%%%%%%%%%%%%%%%%%%%%%%%%%%
\section{The Annular Category from Two Cyclic Categories}\label{cyclicAtl}

%%%%%%%%%%%%%%%%%%%%%%%%%%%%%%%%%%%%%%%%%%%%%%%%%%%
\subsection{The Cyclic Category}\label{cyclic}
In this subsection, we recover Jones' result in \cite{MR1865703} that there are two copies of (the opposite of) the cyclic category $\ccD\op$ in $\aD\cong\Atl$. We will recycle the notation $t$ from Section 1. The definitions from this section are adapted from \cite{MR1217970}.

\begin{defn}
Let $\cAtl^+$ be the subcategory of $\Atl$ with objects $[n]$ for $n\in\N$ such that for $m,n\in\N$, $\cAtl(m,n)$ is the set of annular $(m,n)$-tangles with no loops, only shaded caps, and only unshaded cups. Let $\cAtl^-$ be the image of $\cAtl^+$ under the involution of $\Atl$, i.e. $\cAtl^-(m,n)$ is  the set of annular $(m,n)$-tangles with no loops, only unshaded caps, and only shaded cups.
\end{defn}

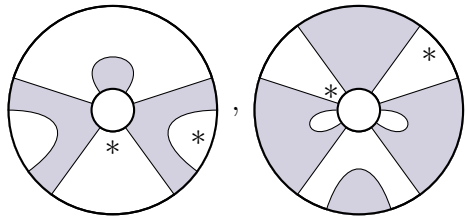
\begin{figure}[!ht]
\begin{tikzpicture}[annular]
	\clip (0,0) circle (2cm);
	\draw[ultra thick] (0,0) circle (2cm);	
	\node at (0,0)  [empty box] (T) {};
%THROUGH STRINGS
	\filldraw[shaded] (306:4cm)--(0,0)--(378:4cm);	
	\filldraw[shaded] (234:4cm)--(0,0)--(162:4cm);	
%CAPS AND CUPS
	\filldraw[shaded] (T.126) .. controls ++(126:11mm) and ++           (54:11mm) .. (T.54);
	\filldraw[unshaded] (180:4cm)--(180:2cm)  .. controls ++(0:11mm) and ++(36:11mm) .. (216:2cm) -- (216:4cm);
	\filldraw[unshaded] (0:4cm)--(0:2cm)  .. controls ++(180:11mm) and ++(144:11mm) ..  (324:2cm) -- (324:4cm);
	\draw[ultra thick] (0,0) circle (2cm);	
	\node at (0,0)  [empty box] (T) {};
	\node at (T.270) [below] {$*$};
	\node at (-15:2cm)[left] {$*$};
\end{tikzpicture}\hs,\hs
\begin{tikzpicture}[annular]
	\clip (0,0) circle (2cm);
	\draw[ultra thick] (0,0) circle (2cm);	
	\node at (0,0)  [empty box] (T) {};
%THROUGH STRINGS
	\filldraw[shaded] (306:4cm)--(0,0)--(378:4cm);	
	\filldraw[shaded] (234:4cm)--(0,0)--(162:4cm);
	\filldraw[shaded] (126:4cm)--(0,0)--(54:4cm);
%CAPS AND CUPS
	\filldraw[unshaded] (T.324) .. controls ++(324:8mm) and ++           (0:8mm) .. (T.0);
	\filldraw[unshaded] (T.216) .. controls ++(216:8mm) and ++           (180:8mm) .. (T.180);
	\filldraw[shaded] (288:4cm)--(288:2cm)  .. controls ++(108:11mm) and ++(72:11mm) .. (252:2cm) -- (252:4cm);

	\draw[ultra thick] (0,0) circle (2cm);	
	\node at (0,0)  [empty box] (T) {};
	\node at (T.122) [left] {$*$};
	\node at (45:1.9cm)[below] {$*$};
\end{tikzpicture}
\caption{Examples of morphisms in $\cAtl^+$ and $\cAtl^-$ respectively.}
\end{figure}

\begin{rem}
Clearly $\cAtl^+\cong \cAtl^-$.
\end{rem}

\begin{defn}
The opposite of the cyclic category $\ccD\op$ is given by
\itt{Objects} $[n]$ for $n\in\Z_{\geq 0}$ and
\itt{Morphisms} generated by
\begin{align*}
d_i\colon [n]&\longrightarrow [n-1]\hsp{for} i=0,\dots,n\hsp{where} n\geq 1\\
s_i\colon [n]&\longrightarrow [n+1]\hsp{for} i=0,\dots,n\hsp{where} n\geq 0\\
t\colon [n]&\longrightarrow [n]\hsp{where} n\geq 0
\end{align*}
subject to the relations
\be
\item[(1)] $d_id_j=d_{j-1}d_i$ for $i<j$.
\item[(2)] $s_is_j=s_{j+1}s_i$ for $i\leq j$,
\item[(3)] $\D d_i s_j = \begin{cases}
s_{j-1} d_i &\hsp{if} i<j\\
\id_{[n]} &\hsp{if} i=j,j+1\\
s_jd_{i-1}&\hsp{if}  i>j+1,
\end{cases}$
\item[(4)] $t^{n+1}=\id_{[n]}$,
\item[(5)] $d_it=td_{i-1}$ for $1\leq i\leq n$, and
\item[(6)] $s_it=ts_{i-1}$ for $1\leq i\leq n$.
\ee
\end{defn}

\begin{rem}\label{simplicial}
The opposite of the simplicial category $\sD\op$ is the subcategory of $\ccD\op$ generated by the $d_i$'s and the $s_i$'s subject to relations (1)-(3).
\end{rem}

\begin{rem}\label{cDadd}
Similar to Proposition \ref{additional}, we have the additional relations in $\ccD\op$ that $d_0t=d_n$ and $s_0t=t^2s_n$. 
\end{rem}

\begin{defn}\label{extra}
For $n\in \Z_{\geq 0}$, we define $s_{-1}\colon [n]\to[n+1]$ by $s_{-1}=ts_n$. This map is called the extra degeneracy.
\end{defn}
\begin{rem}
In \cite{MR1217970}, Loday names this map $s_{n+1}$. However, we will use the name $s_{-1}$ considering Proposition \ref{extradegeneracy}, Corollary \ref{beta1}, and the fact that if $R$ is a unital commutative ring, $A$ is a unital $R$-algebra, and  $C_\bullet$ is the cyclic $R$-module (see Section \ref{annularobjects}) arising from the Hochschild complex with coefficients in $A$, then $C_n=A^{\otimes n+1}$, and
\begin{align*}
s_{-1}(a_0\otimes\cdots \otimes a_n)&=1\otimes a_0\otimes\cdots\otimes a_n,\\
s_{i}(a_0\otimes\cdots \otimes a_n)&=a_0\otimes\cdots a_{i}\otimes 1\otimes a_{i+1}\otimes\cdots \otimes a_n\hsp{for} 0\leq i\leq n-1, \hsp{and}\\
s_{n}(a_0\otimes\cdots \otimes a_n)& =a_0\otimes\cdots\otimes a_n\otimes 1.
\end{align*}
\end{rem}

\begin{prop}\label{extradegeneracy}
The following additional relations hold for $s_{-1}\in\ccD\op(n,n+1)$:
\be
\item[(1)] $s_{-1}s_i=s_{i+1}s_{-1}$ for all $i\geq 0$,
\item[(2)] $\D d_is_{-1}=\begin{cases}
\id_{[n]} & \hsp{if} i=0\\
s_{-1}d_{i-1}  &\hsp{if} 1\leq i\leq n\\
t & \hsp{if} i=n+1, and
\end{cases}$
\item[(3)] $s_0t=ts_{-1}$.
\ee
\end{prop}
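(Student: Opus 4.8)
The plan is to deduce all three identities by straightforward rewriting from the definition $s_{-1}=ts_n$ (Definition~\ref{extra}), using only the defining relations (1)--(6) of $\ccD\op$ together with the two supplementary relations $d_0t=d_n$ and $s_0t=t^2s_n$ of Remark~\ref{cDadd}. The one point that needs care throughout is object bookkeeping: the symbols $t$, $s_j$, $d_j$ are reused on every $[n]$, so each application of (2), (3), (5), (6) must respect the index bounds appropriate to the object involved, and one must repeatedly recognise a subword of the form ``$t$ followed by the top degeneracy'' as an extra degeneracy $s_{-1}$ over the correct object. Relation (3) itself is immediate: $ts_{-1}=t(ts_n)=t^2s_n=s_0t$ by Remark~\ref{cDadd}.

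For relation (1), write $s_{-1}s_i=t\,s_n\,s_i$, where $s_i\colon[n-1]\to[n]$, $s_n\colon[n]\to[n+1]$, $t\colon[n+1]\to[n+1]$, and $0\le i\le n-1$. Relation (2) gives $s_ns_i=s_is_{n-1}$ (valid for $i\le n-1$, with $i=n-1$ being the equal-index instance of (2)), so $s_{-1}s_i=t\,s_i\,s_{n-1}$. Relation (6) in the form $ts_i=s_{i+1}t$ (valid for $0\le i\le n-1$ on $[n]$) then yields $s_{-1}s_i=s_{i+1}\,t\,s_{n-1}$; and since $ts_{n-1}=s_{-1}\in\ccD\op(n-1,n)$, we conclude $s_{-1}s_i=s_{i+1}s_{-1}$.

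For relation (2), write $d_is_{-1}=d_i\,t\,s_n$ with $s_n\colon[n]\to[n+1]$, $t\colon[n+1]\to[n+1]$, $d_i\colon[n+1]\to[n]$, and split into three cases. If $i=0$, Remark~\ref{cDadd} on $[n+1]$ gives $d_0t=d_{n+1}$, and (3) in the case $i=j+1$ gives $d_{n+1}s_n=\id_{[n]}$, so $d_0s_{-1}=\id_{[n]}$. If $1\le i\le n$, relation (5) on $[n+1]$ gives $d_it=td_{i-1}$; then (3) in the case $i<j$ gives $d_{i-1}s_n=s_{n-1}d_{i-1}$, and recognising $ts_{n-1}=s_{-1}\in\ccD\op(n-1,n)$ yields $d_is_{-1}=s_{-1}d_{i-1}$. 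If $i=n+1$, relation (5) gives $d_{n+1}t=td_n$, and (3) in the case $i=j$ gives $d_ns_n=\id_{[n]}$, so $d_{n+1}s_{-1}=t$.

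I expect no genuinely hard step here: each part is a two- or three-move computation. The only real ``obstacle'' is staying disciplined about which object $[n]$ each occurrence of $t$, $s_j$, $d_j$ lives over, so that the index restrictions in (2), (3), (5), (6) are legitimately applied and the ``$t$ times top degeneracy'' subwords are correctly re-read as extra degeneracies over the right objects.
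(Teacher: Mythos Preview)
Your proof is correct and follows essentially the same approach as the paper: each part is obtained by expanding $s_{-1}=ts_n$ and applying relations (2), (3), (5), (6) together with the identities of Remark~\ref{cDadd}, exactly as the paper does. Your index bookkeeping is in fact slightly more careful than the paper's (e.g.\ you correctly write $d_0t=d_{n+1}$ on $[n+1]$ where the paper writes the generic $d_n$), but there is no substantive difference in method.
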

\begin{proof}
\item[(1)] Using relations (2) and (6), we get 
$$s_{-1}s_i = ts_{n+1}s_i=ts_is_{n}=s_{i+1}ts_{n}=s_{i+1}s_{-1}.$$
\item[(2)] Using Remark \ref{cDadd}, we have $d_0s_{-1}=d_0 ts_n = d_n s_n = \id_{[n]}$.
If $1\leq i\leq n$, then using relations (3) and (5), we have
$$
d_is_{-1} = d_i ts_n = td_{i-1} s_n = ts_{n-1}d_{i-1}=s_{-1}d_{i-1}.
$$
Finally, $d_{n+1}s_{-1}=d_{n+1}ts_n=td_ns_n=t\id_{[n]}=t$.
\item[(3)] Using Remark \ref{cDadd}, we have $s_0 t = t^2 s_n = ts_{-1}$.
\end{proof}
\begin{rem}
We may now add $s_{-1}$ to the list of generators of $\ccD\op$ after appropriately altering relations (3) and (6).
\end{rem}

\begin{prop}\label{cDstandard}
Suppose $w=h_r\cdots h_1$ is a word on $\ccD\op$ in $\ccD\op(m,n)$ for $m,n\in\Z_{\geq 0}$. Then there is a decomposition $w=w_{III}w_{II} w_{I}$ such that
\be
\item[(D)] $w_{I}=d_{i_a}\cdots d_{i_1}$ with $i_j>i_{j+1}$ for all $j\in\{1,\dots,a-1\}$.
\item[(T)] $w_{II}=t^k$ for some $k\geq 0$, and
\item[(S)] $w_{III}=s_{i_b}\cdots s_{i_1}$ with $i_j<i_{j+1}$ for all $j\in\{1,\dots,b-1\}$, 
\ee
\end{prop}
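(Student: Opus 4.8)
The plan is to argue by induction on the length $r$ of the word $w=h_r\cdots h_1$, in close parallel with the proof of Theorem \ref{standard}. When $r\le 1$ there is nothing to prove: a single generator $d_i$, $s_i$, or $t$ is already of the required shape, with the other two blocks empty (or a single $t$). For the inductive step write $w=h_r w'$ with $w'=h_{r-1}\cdots h_1$, apply the induction hypothesis to obtain $w'=u_{III}u_{II}u_I$ with $u_{III}$ increasing in the degeneracies, $u_{II}=t^k$, and $u_I$ decreasing in the faces, and push $h_r$ rightward through this decomposition. So it is enough to treat the three cases $h_r=s_i$, $h_r=t$, and $h_r=d_i$.

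If $h_r=s_i$, we need only rewrite $s_i u_{III}$ as an increasing composite of degeneracies, i.e.\ perform the usual sorting of degeneracies by repeatedly replacing an adjacent pair $s_x s_y$ with $x\le y$ by $s_{y+1}s_x$ (relation (2)); this terminates, and taking $w_{III}$ to be the sorted word with $w_{II}=u_{II}$, $w_I=u_I$ completes the case. If $h_r=t$, push $t$ to the right past $u_{III}$ by means of relation (6) in the form $t s_j=s_{j+1}t$; when $t$ reaches a top degeneracy $s_n$ use $t s_n=s_{-1}$ (Definition \ref{extra}) and then carry $s_{-1}$ onwards to the right using Proposition \ref{extradegeneracy}(1). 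The net effect is that $t u_{III}$ becomes an increasing composite $w_{III}$ of degeneracies (possibly involving $s_{-1}$, which by the remark preceding the statement we treat as a generator) followed by $t^{\varepsilon}$ with $\varepsilon\in\{0,1\}$ ($\varepsilon=1$ exactly when no top degeneracy was met), so that $w=w_{III}\,t^{k+\varepsilon}\,u_I$ is in standard form. The principal case, $h_r=d_i$, is the analogue of the $\A$-case of Theorem \ref{standard}: push $d_i$ rightward through $u_{III}$ using relation (3) (and Proposition \ref{extradegeneracy}(2) when an $s_{-1}$ is present), so that at each step $d$ slides past a degeneracy with a shift of index, cancels a degeneracy through an identity case of relation (3), or turns into $t$ (the $i=n+1$ case of \ref{extradegeneracy}(2)), the last being handled as before. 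If a face $d_p$ emerges to the right of all degeneracies, push it past $u_{II}=t^k$ using relation (5) together with $d_0 t=d_n$ from Remark \ref{cDadd}, and finally sort the resulting composite of faces into decreasing order by repeatedly replacing an adjacent pair $d_x d_y$ with $x\ge y$ by $d_y d_{x+1}$ (an instance of relation (1)). Reassembling the blocks gives $w=w_{III}w_{II}w_I$.

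The step I expect to be the real obstacle is the bookkeeping around the rotation: the collisions of $t$, or of a face or degeneracy of extremal index, with another extremal operator. These are precisely the configurations that Definition \ref{extra}, Proposition \ref{extradegeneracy}, and the extra relations $d_0 t=d_n$ and $s_0 t=t^2 s_n$ of Remark \ref{cDadd} exist to control, and the reason for enlarging the generating set of $\ccD\op$ by $s_{-1}$ is exactly that a ``rotated degeneracy'' can then be absorbed into the leftmost block $w_{III}$ instead of being left stranded between the $t$-block and the face-block. As in Theorem \ref{standard} it is cleanest to isolate the step ``push one face rightward past an increasing block of degeneracies'' as a subsidiary algorithm whose termination is certified by a monovariant on the indices, mirroring Algorithm \ref{irredalg}.
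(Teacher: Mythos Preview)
Your proof is correct and follows essentially the same inductive scheme as the paper's: induct on word length, decompose $w'=h_{r-1}\cdots h_1$ by hypothesis, and push $h_r$ rightward through the $s$-block, $t$-block, and $d$-block, using relations (2), (3), (5), (6), Remark~\ref{cDadd}, and Proposition~\ref{extradegeneracy} at the wrap-around points. The only cosmetic differences are that you are more explicit about the role of $s_{-1}$ (treated as a generator) and suggest packaging the ``push a face past an increasing degeneracy block'' step as a standalone algorithm with a monovariant, whereas the paper simply invokes the relations and re-sorts; neither changes the argument.
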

\begin{proof}
The proof is similar to Theorem \ref{standard}, but much easier. We proceed by induction on $r$. If $r=1$, the result is trivial. Suppose $r>1$ and the result holds for all words of shorter length. Apply the induction hypothesis to $w'=h_{r-1}\cdots h_1$ to get
$$
w'=u_{III}u_{II}u_I
$$
satisfying (1)-(3). There are three cases.
\be
\item[(T)] Suppose $h_r=t$. Set $w_I=u_I$. Use relation (6) and Remark \ref{cDadd} to push $t$ to the right of the $s_i$'s. Either it makes it all the way, or it disappears in the process. Define $w_{II}$ accordingly. Order the $s_i$'s using relation (2) to get $w_{III}$. We are finished.
\item[(D)] Suppose $h_r=d_i$. Use relation (3) to push $d_i$ to the right of the $s_j$'s. One of three possibilities occurs:
\be
\item[(1)] We only use the relation $d_i s_j=s_k d_l$. Thus we can push $d_i$ all the way to the right. Now push $d_i$ right of the $t$'s using relation (5) and Remark \ref{cDadd}. Order the $s_j$'s using relation (2) to get $w_{III}$, define $w_{II}$ in the obvious way, and reorder the $d_i$'s using relation (1) to get $w_I$. We are finished.
\item[(2)]  We use the relation $d_is_j=\id$, and $d_i$ disappears. Set $w_i=u_i$ for $i\in\{I,II\}$, and order the $s_j$'s using relation (2) to get $w_{III}$. We are finished.
\item[(3)] We use the relation $d_{n+1}s_{-1}=t$. We are now argue as in Case (T). We are finished.
\ee
\item[(S)] Suppose $h_r=s_i$. Order $s_iu_{III}$ using relation (2) to get $w_{III}$, and set $w_i=u_i$ for $i\in\{I,II\}$. We are finished.
\ee
\end{proof}

\begin{thm}
The following defines an injective functor $H^+\colon \ccD\op\to \aD$: 
\itt{Objects} $H^+([n])=[n+1]$ for $n\in\Z_{\geq 0}$, and
\itt{Morphisms} Let $n\in\Z_{\geq 0}$. 
\be
\item[(D)] For $j\in\{0,\dots,n\}$, set $H^+(d_j\in \ccD\op(n,n-1))=\alpha_{2j+1}\in \aD(n+1,n)$.
\item[(T)] Set $H^+(t\in \ccD\op(n,n))=\tau\in \aD(n+1,n+1)$.
\item[(S)] For $j\in\{0,\dots,n\}$, set $H^+(s_j\in \ccD\op(n,n+1))=\beta_{2j+2}\in \aD(n+1,n+2)$. 
\ee
\end{thm}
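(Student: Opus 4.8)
The plan is to prove the statement in two parts: first that $H^+$ is a well-defined functor, and second that it is injective on each hom-set $\ccD\op(m,n)$ (injectivity on objects is immediate since $n\mapsto n+1$ is injective, and, $H^+$ being a functor, it preserves domains and codomains). For well-definedness I would check that each of the six defining relations of $\ccD\op$ is carried, under the assignment $d_j\mapsto\alpha_{2j+1}$, $s_j\mapsto\beta_{2j+2}$, $\tau\mapsto\tau$, to a relation (or an immediate consequence of relations) valid in $\aD$. Relations (1), (2), (5), (6) of $\ccD\op$ go directly to relations (1), (2), (4), (5) of $\aD$ after the substitution $i\mapsto 2i+1$ (resp. $2i+2$): the index conditions translate correctly because doubling sends $i<j$ to $2i+1<2j+1$ and $i\geq 1$ to an index $\geq 3$, while the excluded cases $(i,j)\neq(1,2n)$ and $(1,2n+2)$ in those relations can never arise since $2j+1$ (resp. $2i+2$) has the wrong parity. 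Relation (4), $t^{n+1}=\id_{[n]}$, maps to relation (3) of $\aD$, $\tau^{n+1}=\id_{[n+1]}$, precisely because $H^+([n])=[n+1]$.

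The one genuinely interesting check is relation (3) of $\ccD\op$: the five cases of $d_is_j$ map to $\alpha_{2i+1}\beta_{2j+2}$, whose composite lies in $\aD(n+1,n+1)$; the subcases $i=j$ and $i=j+1$ become $2i+1=(2j+2)-1$ and $2j+3=(2j+2)+1$, i.e. the two "identity" cases of relation (6) of $\aD$, while $i<j$ and $i>j+1$ become $2i+1<(2j+2)-1$ and $2i+1>(2j+2)+1$, the $\beta_{j'-2}\alpha_{i'}$ and $\beta_{j'}\alpha_{i'-2}$ cases, and the forbidden indices $(1,2(n+1)+2)$ and $(2(n+1)+2,1)$ of relation (6) never occur because $s_j\colon[n]\to[n+1]$ forces $j\leq n$. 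All of this is routine index bookkeeping; the main thing to watch is consistently tracking which object each generator sits over, since the "$n$" appearing in an $\aD$-relation (e.g. Proposition \ref{additional}) is shifted by one under $H^+$, and one must also confirm $F\circ$-style derived identities such as $\alpha_1\tau=\alpha_{2n+1}$ over $[n+1]$ agree with $H^+(d_0t)=H^+(d_n)$.

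For injectivity I would combine the normal form in $\ccD\op$ from Proposition \ref{cDstandard} with the uniqueness of standard forms in $\aD$ (Corollary \ref{standardunique}). Given $w\in\ccD\op(m,n)$, write $w=w_{III}w_{II}w_I$ with $w_I=d_{i_a}\cdots d_{i_1}$ ($i_1>\cdots>i_a$), $w_{II}=t^k$ (take $0\leq k\leq n$ using $t^{n+1}=\id$), and $w_{III}=s_{i_b}\cdots s_{i_1}$ ($i_1<\cdots<i_b$). The key point is that $H^+(w_I)=\alpha_{2i_a+1}\cdots\alpha_{2i_1+1}$ is an \emph{ordered}, hence irreducible, word in $\W_I$ over $[m+1]$: the strict decrease $i_r>i_{r+1}$ gives $2i_r+1>2i_{r+1}+1$, and the constraint $i_r\leq m-r+1$ forcing the morphism $d_{i_r}$ to exist gives $2i_r+1\leq 2(m-r+1)+1 < 2((m+1)-r)+2$, which is exactly the bound in Definition \ref{irredalpha}(1). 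Dually, $H^+(w_{III})^*=\alpha_{2i_1+2}\cdots\alpha_{2i_b+2}$ is an ordered irreducible word in $\W_I$, so $H^+(w_{III})\in\W_{III}$ has $H^+(w_{III})^*$ in the form required by Theorem \ref{standard}, and $H^+(w_{II})=\tau^k\in\W_{II}$. Hence $H^+(w)=H^+(w_{III})\cdot\tau^k\cdot H^+(w_I)$ is, verbatim, a standard form of the morphism $H^+(w)$.

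Now if $H^+(w)=H^+(w')$ with $w'=w'_{III}w'_{II}w'_I$ chosen the same way, Corollary \ref{standardunique} forces $H^+(w_I)=H^+(w'_I)$, $\tau^k=\tau^{k'}$, and $H^+(w_{III})=H^+(w'_{III})$ as \emph{words}; since $\alpha_{2j+1}$ determines $j$, $\beta_{2j+2}$ determines $j$, and $\tau^k$ determines $k$, a letter-by-letter comparison gives $w_I=w'_I$, $w_{II}=w'_{II}$, $w_{III}=w'_{III}$ as words in $\ccD\op$, hence $w=w'$ as morphisms. This shows $H^+$ is injective. The main obstacle is not a single hard idea but getting the index arithmetic exactly right: matching the existence constraints on $d_{i_r}$ and $s_{i_r}$ against the bounds in Definition \ref{irredalpha}, and pinning down the cases of relation (3) of $\ccD\op$ inside relation (6) of $\aD$ while keeping the ambient objects straight.
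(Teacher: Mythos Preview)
Your proposal is correct and takes essentially the same approach as the paper: verify that the defining relations of $\ccD\op$ are carried to relations of $\aD$ (the paper simply says ``Clearly $H^+$ is a functor as the relations are satisfied''), and deduce injectivity by showing that $H^+$ sends a standard form in $\ccD\op$ (Proposition~\ref{cDstandard}) to a standard form in $\aD$, then invoking uniqueness (Corollary~\ref{standardunique}). Your detailed index bookkeeping is accurate; the only superfluous step is checking derived identities such as $H^+(d_0t)=H^+(d_n)$, since these follow automatically once the defining relations are preserved.
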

\begin{proof}
Clearly $H^+$ is a functor as the relations are satisfied. Injectivity follows immediately from Corollary \ref{standardunique} and Proposition \ref{cDstandard}. 
\end{proof}
\begin{rem}
Note that $H^+(s_{-1})=H^+(ts_n)=H^+(t)H^+(s_n)=\tau \beta_{2n+2}=\beta_{2n+4}\tau$.
\end{rem}
\begin{cor}
The image of $F\circ H^+\colon \ccD\op\to \Atl$ is $\cAtl^+$. Hence $\ccD\op\cong\cAtl^+$.
\end{cor}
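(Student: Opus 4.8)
The plan is to prove that $F\circ H^+$ is injective with image exactly the subcategory $\cAtl^+$, and then read off the isomorphism $\ccD\op\cong\cAtl^+$. Injectivity is immediate: $H^+$ is injective (by the theorem just proved) and $F$ is an isomorphism (Theorem \ref{iso}), so $F\circ H^+$ is injective on objects and on morphisms. On objects $F(H^+([n]))=[n+1]$, so the objects of the image are exactly $\{[m]:m\geq 1\}$, the objects of $\cAtl^+$. It then remains to match morphisms, which I would do by proving the two inclusions separately.

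For $\im(F\circ H^+)\subseteq\cAtl^+$: since $\cAtl^+$ is a subcategory of $\Atl$ and every morphism of $\ccD\op$ is a composite of the generators $d_j$, $t$, $s_j$, it suffices to check that the images $F(H^+(d_j))=a_{2j+1}$, $F(H^+(t))=t$ and $F(H^+(s_j))=b_{2j+2}$ lie in $\cAtl^+$. Drawing the pictures, each is loopless, $t$ has no caps and no cups, $a_{2j+1}$ has a single cap which is shaded (and no cups), and $b_{2j+2}$ a single cup which is unshaded (and no caps); hence the generators, and therefore all morphisms of $\ccD\op$, map into $\cAtl^+$.

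For $\im(F\circ H^+)\supseteq\cAtl^+$: let $T\in\cAtl^+(m,n)$, so $m,n\geq1$ and $T$ is loopless with every cap shaded and every cup unshaded. I would first show $T$ has at least one through string. If not, then in the decomposition $T=T_{III}\circ T_{II}\circ T_I$ of Theorem \ref{decomposition} the intermediate object is $[0\pm]$; since $T$ is loopless, the Type II classification of Proposition \ref{capindex} forces $T_{II}=\id_{[0\pm]}$, so $T=T_{III}\circ T_I$; the outermost letter of $T_I$ is then $a_1\in\Atl(1,0+)$ or $a_2\in\Atl(1,0-)$, and since $a_2$ has an unshaded cap it must be $a_1$, so the intermediate object is $[0+]$; but then the innermost letter of $T_{III}$ is $b_1\in\Atl(0+,1)$, whose cup is shaded, contradicting that $T$ has only unshaded cups. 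Hence $T$ has through strings, the intermediate object $[l]$ has $l\geq1$, and $T_{II}=t^{\rel_*(T)}$ is a genuine power of the rotation; moreover the caps of $T$ are exactly the caps of $T_I$ and the cups of $T$ exactly the cups of $T_{III}$, so $T_I$ has only shaded caps and $T_{III}$ only unshaded cups. Applying $G=F^{-1}$, I would then invoke the explicit bijections of Theorem \ref{bijective} (built from Algorithm \ref{readtangle}, Proposition \ref{alpha} and the involution), together with the parity observations ``$a_i$ has a shaded cap iff $i$ is odd'' and ``$b_i$ has an unshaded cup iff $i$ is even'', to conclude that $G(T_I)$ is a word in the odd-indexed $\alpha$'s, $G(T_{III})$ a word in the even-indexed $\beta$'s, and $G(T_{II})=\tau^{\rel_*(T)}$. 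Since $H^+(d_j)=\alpha_{2j+1}$ runs over all odd-indexed $\alpha$'s in each hom-set, $H^+(s_j)=\beta_{2j+2}$ over all even-indexed $\beta$'s, and $H^+(t)=\tau$, there is a word $v\in\Mor(\ccD\op)$ with $H^+(v)=G(T_{III})G(T_{II})G(T_I)=G(T)$, whence $T=F(G(T))=(F\circ H^+)(v)$.

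Combining the two inclusions gives $\im(F\circ H^+)=\cAtl^+$; since $F\circ H^+$ is injective and, viewed as a functor $\ccD\op\to\cAtl^+$, bijective on objects and on morphisms, it is an isomorphism of categories, i.e.\ $\ccD\op\cong\cAtl^+$. I expect the main obstacle to be the parity bookkeeping behind the last inclusion: checking that the shading of the cap of $a_i$ (and dually the cup of $b_i$) depends only on the parity of $i$, stably under composition, and that this is exactly what distinguishes the odd-indexed $\alpha$-letters (resp.\ even-indexed $\beta$-letters) under the tangle--word dictionary of Theorem \ref{bijective} --- together with the care needed to exclude the degenerate factorizations through $[0\pm]$.
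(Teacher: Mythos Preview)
Your proposal is correct and follows the same route as the paper. The paper's own proof is the two--line sketch ``$F\circ H^+$ is injective and lands in $\cAtl^+$ as all generators do; surjectivity follows from Theorem~\ref{decomposition}.'' You unpack exactly this: injectivity from $H^+$ injective and $F$ an isomorphism, the inclusion $\im(F\circ H^+)\subseteq\cAtl^+$ by checking generators, and the reverse inclusion via the decomposition $T=T_{III}\circ T_{II}\circ T_I$, together with the parity analysis showing that shaded caps force odd--indexed $\alpha$'s and unshaded cups force even--indexed $\beta$'s. Your extra care in ruling out factorizations through $[0\pm]$ (forcing at least one through string) and your parity bookkeeping are precisely the details the paper suppresses; there is no genuine difference in strategy.
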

\begin{proof}
It is clear $F\circ H^+$ is injective and lands in $\cAtl^+$ as all generators of $\ccD\op$ land in $\cAtl^+$. Surjectivity follows from Theorem \ref{decomposition}.
\end{proof}
\begin{cor}
A decomposition $w=w_{III}w_{II}w_I$ as in Proposition \ref{cDstandard} is unique.
\end{cor}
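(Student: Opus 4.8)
The plan is to deduce this from the uniqueness of standard forms in $\aD$ (Corollary \ref{standardunique}) by transporting the decomposition along the injective functor $H^+\colon\ccD\op\to\aD$. Concretely, I will check that $H^+$ carries any decomposition $w=w_{III}w_{II}w_I$ of the shape produced by Proposition \ref{cDstandard} to a standard form (Definition \ref{standardform}) of $H^+(w)$ with both $\delta$-exponents equal to $0$; uniqueness of that standard form then determines the three $H^+$-images, and injectivity of $H^+$ determines $w_I,w_{II},w_{III}$ themselves.

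The formal part is immediate: since $H^+$ is a functor, $H^+(w)=H^+(w_{III})\,H^+(w_{II})\,H^+(w_I)$, where $H^+(w_{II})=\tau^{k}\in\W_{II}$, $H^+(w_I)$ is a composite of $\alpha_i$'s hence lies in $\W_I$, and $H^+(w_{III})$ is a composite of $\beta_i$'s hence lies in $\W_{III}$; and $H^+$ never produces a $\delta_\pm$, so no $\delta$'s appear.

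The main (and only non-formal) point is that $H^+(w_I)$ and $H^+(w_{III})^{*}$ are in the form afforded by Proposition \ref{alpha}; in fact each is a single \emph{ordered} irreducible word in the sense of Definition \ref{irredalpha}(1). Write $w_I=d_{i_a}\cdots d_{i_1}$ with $i_j>i_{j+1}$ and $w_I\colon[p]\to[p-a]$ in $\ccD\op$; then $H^+(w_I)=\alpha_{2i_a+1}\cdots\alpha_{2i_1+1}\colon[M]\to[M-a]$ with $M=p+1$. The indices $2i_j+1$ are still strictly decreasing, and the $r$-th factor from the right, being a legitimate generator $[M-r+1]\to[M-r]$, has index at most $2(M-r)+2$; being \emph{odd}, that index is at most $2(M-r)+1<2(M-r)+2$, which is exactly the ``ordered'' inequality of Definition \ref{irredalpha}(1). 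For $H^+(w_{III})$ one argues on the adjoint: writing $w_{III}=s_{i_b}\cdots s_{i_1}$ with $i_j<i_{j+1}$ and $w_{III}\colon[p]\to[p+b]$, one gets $H^+(w_{III})^{*}=\alpha_{2i_1+2}\cdots\alpha_{2i_b+2}\colon[N]\to[N-b]$ with $N=p+b+1$, whose indices read from the right are $2i_b+2>\cdots>2i_1+2$; the $r$-th index from the right is $2i_{b-r+1}+2$, and the bound $i_{b-r+1}\le p+(b-r+1)-1=N-r-1$ (valid because $s_{i_{b-r+1}}$ is a legitimate degeneracy) gives $2i_{b-r+1}+2\le 2(N-r)<2(N-r)+2$, the required inequality. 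Hence $H^+(w_{III})\,\tau^{k}\,H^+(w_I)$ satisfies all the requirements of Theorem \ref{standard} with $c_+=c_-=0$, i.e. it is a standard form of $H^+(w)$.

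Finally, given two decompositions $w=w_{III}w_{II}w_I=w'_{III}w'_{II}w'_I$ of the shape in Proposition \ref{cDstandard}, applying $H^+$ produces two standard forms of $H^+(w)$, each with vanishing $\delta$-exponents, so by Corollary \ref{standardunique} they agree term by term: $H^+(w_I)=H^+(w'_I)$, $H^+(w_{II})=H^+(w'_{II})$, $H^+(w_{III})=H^+(w'_{III})$. Since $H^+$ is injective — it sends the distinct generators $d_0,\dots,d_n$, $t$, $s_0,\dots,s_n$ of each hom-set to distinct generators of $\aD$ — we conclude $w_I=w'_I$, $w_{II}=w'_{II}$, $w_{III}=w'_{III}$. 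The index bookkeeping in the third paragraph is the step I expect to require the most care; everything else is formal.
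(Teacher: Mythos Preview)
Your proposal is correct and is essentially the paper's intended argument. The corollary is stated in the paper without proof, immediately after the injectivity of $H^+$ and the isomorphism $\ccD\op\cong\cAtl^+$; the implicit proof is precisely to transport a decomposition of the shape in Proposition \ref{cDstandard} along $H^+$ and invoke the uniqueness of standard forms in $\aD$ (Corollary \ref{standardunique}), which is exactly what you do, and your index bookkeeping showing that $H^+(w_I)$ and $H^+(w_{III})^*$ are single ordered irreducibles is the right verification. One small remark: in the final step you do not actually need the full injectivity of $H^+$ as a functor---once Corollary \ref{standardunique} gives equality of the two standard forms \emph{as words}, the sequences of $\alpha$'s and $\beta$'s coincide termwise, and since $j\mapsto 2j+1$ and $j\mapsto 2j+2$ are injective you recover the $d$- and $s$-indices directly; your parenthetical about distinct generators already captures this.
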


\begin{thm}
The following defines an injective functor $H^-\colon \ccD\op\to \aD$: 
\itt{Objects} $H^-([n])=[n+1]$ for $n\in\Z_{\geq 0}$, and
\itt{Morphisms} Let $n\in\Z_{\geq 0}$. 
\be
\item[(D)] For $j\in\{0,\dots,n\}$, set $H^-(d_j\in \ccD\op(n,n-1))=\alpha_{2j+2}\in \aD(n+1,n)$.
\item[(T)] Set $H^-(t\in \ccD\op(n,n))=\tau\in \aD([n+1],[n+1])$.
\item[(S)] For $j\in\{0,\dots,n\}$, set $H^-(s_j\in \ccD\op(n,n+1))=\beta_{2j+3}\in \aD(n+1,n+2)$. 
\ee
\end{thm}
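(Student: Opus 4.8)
The plan is to imitate, essentially verbatim, the proof of the corresponding statement for $H^+$; the only change is a uniform parity shift of indices (face maps now land in the even-indexed $\alpha$'s and degeneracies in the odd-indexed $\beta$'s). So I would first verify that $H^-$ respects relations (1)--(6) of $\ccD\op$, hence is a functor, and then deduce injectivity from the standard-form results, just as for $H^+$.

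For the functoriality check: relations (4)--(6) of $\ccD\op$ become, under $H^-$, relations (3)--(5) of $\aD$ together with Proposition \ref{additional} — for instance $H^-(t^{n+1})=\tau^{n+1}=\id$ by relation (3) of $\aD$ on $[n+1]$, and $H^-(d_it)=\alpha_{2i+2}\tau$, $H^-(td_{i-1})=\tau\alpha_{2i}$ agree by relation (4) of $\aD$ since the indices occurring are at least $3$. Relations (1) and (2) of $\ccD\op$ become relations (1)/(1$'$) and (2) of $\aD$; the one instance needing a glance is the degenerate case $i=j-1$ of $d_id_j=d_{j-1}d_i$, where one rewrites using the form (1$'$). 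Relation (3) of $\ccD\op$ is the real bookkeeping step: splitting into the cases $i<j$, $i=j$, $i=j+1$, $i>j+1$, one checks that $\alpha_{2i+2}\beta_{2j+3}$ falls into exactly the branch of relation (6) of $\aD$ matching the corresponding branch of relation (3), and --- the point to be careful about --- that by parity the exceptional branches of (6), namely those producing $\tau^{\pm1}$ or a $\delta_\pm$, never arise here, since the pairs $(2i+2,2j+3)$ are never of the form $(1,2n+2)$ or $(2n+2,1)$ and never have equal entries. Thus $H^-$ is well defined.

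For injectivity I would argue exactly as for $H^+$. Given a word $w$ on $\ccD\op$, put it in the standard form $w=w_{III}w_{II}w_I$ of Proposition \ref{cDstandard}, so $w_I$ is an ordered product of $d$'s, $w_{II}=t^k$, and $w_{III}$ an ordered product of $s$'s. Applying $H^-$ gives $H^-(w)=H^-(w_{III})\,\tau^k\,H^-(w_I)$, and the claim is that this is already a standard form of $\aD$ in the sense of Theorem \ref{standard}: $\tau^k$ is of Type II; $H^-(w_{III})$ is of Type III with $(H^-(w_{III}))^{*}$ an irreducible Type I word; $H^-(w_I)$ is a product of irreducible Type I words in the sense of Definition \ref{irredalpha} (here, unlike for $H^+$, a factor of the second kind with $k=l=0$ and $q=2n+2$ is forced precisely where a maximal face map $d_n$ occurs in $w_I$); and no $\delta_\pm$ appears. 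Since standard forms in $\aD$ are unique by Corollary \ref{standardunique}, the equality $H^-(w)=H^-(w')$ forces the corresponding pieces to agree, and since the assignment of a $d$-, $s$- or $t$-word to its image under $H^-$ is visibly reversible on these normal forms (each index $2i+2$ or $2j+3$ determines $i$ or $j$), we conclude $w=w'$.

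A tidier way to organize the same content is to observe that $H^-=\rho\circ H^+$, where $\rho\colon\aD\to\aD$ is the shading-reversal isomorphism $\alpha_i\mapsto\alpha_{i+1}$, $\beta_i\mapsto\beta_{i+1}$, $\tau\mapsto\tau$, $\delta_\pm\mapsto\delta_\mp$, $[0\pm]\mapsto[0\mp]$: indeed $\rho(H^+(d_j))=\rho(\alpha_{2j+1})=\alpha_{2j+2}=H^-(d_j)$, and likewise on $s_j$ and $t$. Granting that $\rho$ is a well-defined automorphism of $\aD$ --- which is again just the parity bookkeeping on the relations of $\aD$ --- injectivity of $H^-$ is immediate from that of $H^+$. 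In any presentation the only genuinely delicate point is the case-check that relation (3) of $\ccD\op$ is carried to relation (6) of $\aD$ without hitting an exceptional branch; everything else is mechanical once the parity shift is tracked.
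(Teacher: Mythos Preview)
Your main argument is correct and matches the paper's proof exactly: the paper says only that $H^-$ is a functor because the relations are satisfied and that injectivity follows from Corollary~\ref{standardunique} and Proposition~\ref{cDstandard}, which is precisely what you have spelled out with the parity bookkeeping supplied. Your alternative via the shading-reversal automorphism $\rho$ with $H^- = \rho \circ H^+$ is a pleasant reformulation not present in the paper, though as you note it requires its own (routine) well-definedness check, with indices read modulo $2n$.
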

\begin{proof}
Clearly $H^-$ is a functor as the relations are satisfied. Injectivity follows immediately from Corollary \ref{standardunique} and Proposition \ref{cDstandard}.
\end{proof}

\begin{rem}\label{beta1}
Note that $H^-(s_{-1})=H^-(ts_n)=H^-(t)H^-(s_n)=\tau \beta_{2n+3}=\beta_{1}$.
\end{rem}
\begin{cor}
The image of $F\circ H^-\colon \ccD\op\to \Atl$ is $\cAtl^-$. Hence $\ccD\op\cong\cAtl^-$.
\end{cor}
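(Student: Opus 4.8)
The plan is to run the same argument that gave $\im(F\circ H^+)=\cAtl^+$, with the roles of odd- and even-indexed generators interchanged and with the involution $*$ of $\Atl$ doing the bookkeeping. First I would record injectivity of $F\circ H^-$: the functor $H^-$ was just shown to be injective, and $F$ is an isomorphism of categories by Theorem \ref{iso}, so $F\circ H^-$ is injective on each hom-set. Since moreover $H^-$ restricts to a bijection $\Ob(\ccD\op)\to\Ob(\cAtl^-)$ (it sends $[n]$ to $[n+1]$, and the objects of $\cAtl^-$ are the $[n]$ with $n\in\N$), once surjectivity onto $\cAtl^-$ is established the corestriction $\ccD\op\to\cAtl^-$ is bijective on objects and on morphisms, hence an isomorphism of categories, which is the assertion $\ccD\op\cong\cAtl^-$.

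Next I would check $\im(F\circ H^-)\subseteq\cAtl^-$. As $\cAtl^-$ is a subcategory of $\Atl$ it is closed under composition, so it suffices to see that the image of each generator of $\ccD\op$ lies in $\cAtl^-$; by the definitions of $H^-$ and $F$ these images are $F(\alpha_{2j+2})=a_{2j+2}$, $F(\tau)=t$, and $F(\beta_{2j+3})=b_{2j+3}$. Inspecting the defining pictures, the even-indexed $a_i$ have a single unshaded cap and no loops, $t$ has neither caps nor cups nor loops, and the odd-indexed $b_i$ have a single shaded cup and no loops, so all three lie in $\cAtl^-$. (Equivalently: $a_{2j+2}=b_{2j+2}^{*}$, $b_{2j+3}=a_{2j+3}^{*}$, and $t=(t^{-1})^{*}$, and the $*$-images of the generators of $\cAtl^+$ lie in $\cAtl^-=*(\cAtl^+)$ by the very definition of $\cAtl^-$.)

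For surjectivity I would take an arbitrary $T\in\cAtl^-(m,n)$, i.e.\ an Atl $(m,n)$-tangle with no loops, only unshaded caps, and only shaded cups, and apply the Atl Tangle Decomposition Theorem \ref{decomposition} to write $T=T_{III}\circ T_{II}\circ T_I$. The shading constraints force $T_I$ to be a composite of even-indexed $a_i$'s, $T_{II}$ a power of $t$, and $T_{III}$ a composite of odd-indexed $b_i$'s; matching these against the image of $H^-$ via Theorem \ref{bijective} and Proposition \ref{cDstandard} then exhibits $T$ as $F\circ H^-$ of a word $w_{III}w_{II}w_I$ in standard form, so $T\in\im(F\circ H^-)$. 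The slickest alternative is to combine this with the corresponding statement for $\cAtl^+$ and the involution: one checks that $*\circ F\circ H^+$ and $F\circ H^-$ have the same image (they agree after precomposing with a suitable self-isomorphism of $\ccD\op$, arising from its self-duality together with the extra degeneracy of Definition \ref{extra}), whence $\im(F\circ H^-)=*(\im(F\circ H^+))=*(\cAtl^+)=\cAtl^-$.

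I do not expect a genuine obstacle here: the only thing to be careful about is the parity bookkeeping, namely that ``$H^-$ uses the even $\alpha_i$ and the odd $\beta_i$'' is precisely what is needed for the images to have unshaded caps and shaded cups as required by the definition of $\cAtl^-$. Once the conventions are lined up, both the containment and the surjectivity are immediate from Theorem \ref{decomposition} (or from the $\cAtl^+$ case under the involution), and there is no content beyond what the $H^+$ corollary already contains.
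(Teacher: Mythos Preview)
Your proposal is correct and follows the same approach as the paper. The paper actually gives no proof at all for this corollary, leaving it implicit by analogy with the corresponding $\cAtl^+$ corollary, whose proof is exactly the argument you wrote out: injectivity from $H^-$ injective and $F$ an isomorphism, containment by checking generators, and surjectivity via Theorem~\ref{decomposition}. Your parity bookkeeping (even-indexed $a_i$ have unshaded caps, odd-indexed $b_i$ have shaded cups) is precisely what is needed, and the alternative via the involution is a legitimate shortcut though not one the paper invokes.
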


\begin{rem} $\cAtl^+$ and $\cAtl^-$ are exactly the two copies of $\ccD\op$ in $\Atl$ found by Jones in \cite{MR1865703}.
\end{rem}

\begin{cor}\label{opiso}
There is an isomorphism $\ccD\cong \ccD\op$.
\end{cor}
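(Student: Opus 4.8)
The plan is to read off the isomorphism from the two copies of $\ccD\op$ already produced inside $\aD\cong\Atl$, exploiting that the involution is \emph{contravariant}. Recall that the two preceding theorems and their corollaries supply isomorphisms of categories $F\circ H^+\colon\ccD\op\to\cAtl^+$ and $F\circ H^-\colon\ccD\op\to\cAtl^-$, and that $\cAtl^-$ was \emph{defined} as the image of $\cAtl^+$ under the involution $*$ of $\Atl$. Interposing the contravariant $*$ between these two covariant identifications should convert ``isomorphic to $\ccD\op$'' into ``isomorphic to $(\ccD\op)\op=\ccD$''.

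First I would record that $*$ restricts to a bijection $\cAtl^+\to\cAtl^-$ which is the identity on objects. This is essentially the content of the definition of $\cAtl^-$; geometrically, reflecting an Atl tangle about the circle of radius $3/4$ interchanges its caps with its cups while preserving the shading of every region, so it carries a loop-free tangle with shaded caps and unshaded cups to a loop-free tangle with shaded cups and unshaded caps, and it is injective because $*$ is an involution. Being the restriction of a contravariant functor and fixing every object, this bijection is a contravariant isomorphism of categories, equivalently a covariant isomorphism $\cAtl^+\cong(\cAtl^-)\op$.

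Then I would simply compose the isomorphisms
$$
\ccD\op\ \cong\ \cAtl^+\ \cong\ (\cAtl^-)\op\ \cong\ (\ccD\op)\op\ =\ \ccD,
$$
where the first is $F\circ H^+$, the second is $*$ as above, and the third is the opposite of the inverse of $F\circ H^-$. This yields $\ccD\op\cong\ccD$, hence $\ccD\cong\ccD\op$. (One may equally run the argument inside $\aD$: since $F$ is involutive, $*$ carries $\im(H^+)$ onto $\im(H^-)$, so applying $H^+$, then $*$, then the inverse of $H^-$ gives a contravariant self-bijection of $\ccD\op$, i.e.\ an isomorphism $\ccD\op\cong\ccD$.) I do not expect a genuine obstacle here: the one thing to watch is the bookkeeping of variance, namely that $*$ really swaps the two copies of $\ccD\op$ and is orientation-reversing. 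If one wants the isomorphism on generators, tracing them through the composite with $\alpha_i^*=\beta_i$, $\beta_i^*=\alpha_i$, $\tau^*=\tau^{-1}$, and Remark \ref{beta1} yields, on $[n]$, the assignment $t\mapsto t^{-1}$, $s_j\mapsto d_j$, $d_j\mapsto s_{j-1}$ (with $s_{-1}$ the extra degeneracy of Definition \ref{extra}); verifying these against the relations of $\ccD\op$ is precisely the work a ``from scratch'' proof would require, but the composite-of-isomorphisms argument renders it unnecessary.
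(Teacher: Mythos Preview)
Your proof is correct and follows essentially the same route as the paper: both arguments chain the isomorphisms $\ccD\op\cong\cAtl^+\cong(\cAtl^-)\op\cong(\ccD\op)\op=\ccD$, using that the involution on $\Atl$ is a contravariant isomorphism carrying $\cAtl^+$ onto $\cAtl^-$. Your version simply spells out the variance bookkeeping and the optional generator-level description more explicitly than the paper does.
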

\begin{proof}
We have $\cAtl^-\cong\ccD\op\cong\cAtl^+$. Note the involution in $\Atl$ is an isomorphism $\cAtl^+\cong (\cAtl^-)\op$. The result follows.
\end{proof}

%%%%%%%%%%%%%%%%%%%%%%%%%%%%%%%%%%%%%%%%%%%%%%%%%%%
%%%%%%%%%%%%%%%%%%%%%%%%%%%%%%%%%%%%%%%%%%%%%%%%%%%
%%%%%%%%%%%%%%%%%%%%%%%%%%%%%%%%%%%%%%%%%%%%%%%%%%%
\subsection{Augmenting the Cyclic Category}\label{augment}

Recall from algebraic topology that the reduced (singular, simplicial, cellular) homology of a space $X$ is obtained by inserting an augmentation map $\varepsilon\colon C_0(X)\to \Z$ where $C_0(X)$ denotes the appropriate zero chains. In the language of the semi-simplicial category, we see that this is the same thing as looking at an augmented semi-simplicial abelian group, i.e., a functor from the opposite of the augmented semi-simplicial category, which is obtained from the opposite of the semi-simplicial category (see \ref{simplicial}) by adding an object $[-1]$ and the generator $d_0\colon [0]\to[-1]$ subject to the relation $d_id_j=d_{j-1}d_i$ for $i<j$.
\[
\xymatrix{
[-1] && [0]\ar[ll]_{d_0} && [1]\ar[ll]_{d_0,d_1} && [2]\ar[ll]_{d_0,d_1,d_2}&& \cdots\ar[ll]_{d_0,d_1,d_2,d_3}
}
\]
This immediately raises the question of how one should augment the opposite of the cyclic category. The surprising answer comes from the symmetry arising from the extra degeneracy $s_{-1}$. We should add two objects, $[+]$ and $[-]$, and maps $d_0\colon[0]\to[+]$ and $s_{-1}\colon[-]\to[0]$ subject to the relations $d_id_j=d_{j-1}d_i$ for $i<j$ and $s_i s_j=s_{j+1}s_i$ for $i\leq j$:
\[
\xymatrix{
[+]&&&&&\\
&& [0]\ar[llu]_{d_0}\ar@<-1ex>[rr]_{s_{-1},s_0}%\ar@(ur,ul)_t
&& [1]\ar@<-1ex>[ll]_{d_0,d_1}\ar@<-1ex>[rr]_{s_{-1},s_0,s_1} 
&& [2]\ar@<-1ex>[ll]_{d_0,d_1,d_2}\ar@<-1ex>[rr]_{s_{-1},s_0,s_1,s_2}
&& \cdots\ar@<-1ex>[ll]_{d_0,d_1,d_2,d_3}\\
[-]\ar[rru]_{s_{-1}}&&&&&&
}
\]
As $t\colon[0]\to[0]$ is the identity, we need not worry about the other relations.
Under the isomorphism $\ccD\op\cong\cAtl^+$ described in the previous subsection, these maps should be represented by the following diagrams:
\begin{figure}[!ht]
%%% epsilons
\begin{tikzpicture}[annular]
%CIRCLES AND OUTER STAR
	\clip (0,0) circle (2cm);
	\draw[ultra thick] (0,0) circle (2cm);	
	\node at (0,0)  [empty box] (T) {};
	\node at (95:1.60cm) [right] {$*$};
%     SHADED CAPS:  starting                          starting                      opposite end       ending
%	                               degrees                         degrees                    degrees               degrees              
	\filldraw[shaded] (T.-40) .. controls ++(-40:11mm) and ++           (40:11mm) .. (T.40);
%	DOTS AND INNER STAR
	\draw[ultra thick] (0,0) circle (2cm);
	\node at (0,0)  [empty box] (T) {};
	\node at (T.70) [above] {$*$};		
\end{tikzpicture}$\hs,$
\begin{tikzpicture}[annular]
%CIRCLES 
	\clip (0,0) circle (2cm);
	\draw[ultra thick][shaded] (0,0) circle (2cm);	
	\node at (0,0)  [empty box] (T) {};

%     UNSHADED CUPS:  starting                          starting                      opposite end       ending
%	                                     degrees                         degrees                    degrees               degrees              
	\filldraw[unshaded] (-45:3cm) --(-45:2cm) .. controls ++(135:7mm) and ++(-135:7mm) ..         (45:2cm) -- (45:3cm);	
%	DOTS AND OUTER STAR		
	\node at (0:1.80cm) [left] {$*$};
	\draw[ultra thick] (0,0) circle (2cm);
	\node at (0,0)  [empty box] (T) {};
%	\node at (T.70) [above] {$*$};		
\end{tikzpicture}
\caption{Maps $d_0\colon[0]\to[+]$ and $s_{-1}\colon[-]\to[0]$}
\end{figure}

Note that these morphisms satisfy the shading convention of $\cAtl^+$ once we add $[0\pm]$ to the objects of $\cAtl^+$. We cannot use just one object as we would then violate the shading convention and closed loops would arise. We will denote the augmented opposite of the cyclic category by $\widetilde{\ccD\op}$. For our main result, we will also need to consider the augmented cyclic category $\widetilde{\ccD}$, which is just the category $\widetilde{\ccD\op}$ with the arrows switched.

%%%%%%%%%%%%%%%%%%%%%%%%%%%%%%%%%%%%%%%%%%%%%%%%%%%
\subsection{Pushouts of Small Categories}
Let $\Cat$ be the category of small categories. Note that pushouts exist in $\Cat$. 

\begin{defn}
Suppose $\AAA,\BB_1,\BB_2$ are small categories and $F_i\colon \AAA\to \BB_i$ for $i=1,2$ are functors. Then the pushout of the diagram
\[\xymatrix{
\AAA\ar[r]^{F_1}\ar[d]_{F_2} & \BB_1\\
\BB_2
}\]
is the small category $\CC$ defined as follows:
\itt{Objects} $\Ob(\CC)$ is the pushout in $\Set$ of the diagram
\[\xymatrix{
\Ob(\AAA)\ar[r]^{F_1}\ar[d]_{F_2} & \Ob(\BB_1)\ar@{..>}[d]^{G_1}\\
\Ob(\BB_2)\ar@{..>}[r]_{G_2} &\Ob(\CC)
}\]
This defines maps $G_i\colon \Ob(\BB_i)\to\Ob(\CC)$ for $i=1,2$.
\itt{Morphisms} 
For $X,Y\in\Ob(\CC)$, $\Mor(X,Y)$ is the set of all words  of the form $\varphi_n\circ\cdots\circ\varphi_1$ such that
\be
\item[(1)] $\varphi_i\in\Mor(\BB_1)\cup \Mor(\BB_2)$ for all $i=1,\dots,n$,
\item[(2)] the source of $\varphi_1$ is in $G_1^{-1}(X)\cup G_2^{-1}(X)$ and the target of $\varphi_n$ is in $G_1^{-1}(Y)\cup G_2^{-1}(Y)$,
\item[(3)] for all $i=1,\dots,n-1$, either
\be
\item[(i)] the target of $\varphi_i$ is the source of $\varphi_{i+1}$, or
\item[(ii)] the target of $\varphi_i$ is $Z_i\in \im(F_j)\subseteq\BB_j$ for some $j\in\{1,2\}$, and the source of $\varphi_{i+1}$ is in $F_k(F_j^{-1}(Z_i))$ where $k\neq j$.
\ee
\ee
subject to the relation $F_1(\psi)=F_2(\psi)$ for every morphism $\psi\in\Mor(\AA)$. 
\end{defn}

\begin{nota}
In the sequel, we will need to discuss $\widetilde{\ccD}$, the augmented cyclic category. In order that no confusion can arise, we will add a $*$ to morphisms to emphasize the fact that they compose in the opposite order. For example, we have generators $d_i^*$ satisfying the relation $d_j^*d_i^*=d_i^*d_{j-1}^*$ for $i<j$. 
\end{nota}

\begin{defn}
Define the small category/groupoid $\TT$ by
\itt{Objects} $[n]$ for $n\in\Z_{\geq 0}\cup\{\pm\}$
\itt{Morphisms} Generated by $t\colon[n]\to[n]$ subject to the relation $t^{n+1}=\id_{[n]}$ for $n\in\Z_{\geq 0}$.
\end{defn}

\begin{defn}
Let $\PO$ be the pushout in $\Cat$ of the following diagram:
\[\xymatrix{
\TT\ar[r]^{F_1}\ar[d]_{F_2} & \widetilde{\ccD\op}\\
\widetilde{\ccD}
}\]
where $F_i([n])=[n]$ for $n\in\Z_{\geq 0}\cup[\pm]$ for $i=1,2$ and $F_1(t)=t$ and $F_2(t)=(t^*)^{-1}=(t^{-1})^*$. Note that if $\ccD\op$ has generators $d_i,s_i,t$ and $\ccD$ has generators $d_i^*,s_i^*,t^*$, then $\PO$ is the category given by
\itt{Objects} $[n]$ for $n\in\Z_{\geq 0}\cup\{\pm\}$ and
\itt{Morphisms} generated by 
\begin{align*}
d_0\colon [0]&\to [+]\hsp{and} s_{-1}^*\colon [0]\to[-]\\
s_{-1}\colon [+]&\to [0]\hsp{and} d_0^*\colon[-]\to[0]\\
d_i,s_{i-1}^*\colon [n]&\longrightarrow [n-1]\hsp{for} i=0,\dots,n\hsp{where} n\geq 1\\
s_i,d_{i+1}^*\colon [n]&\longrightarrow [n+1]\hsp{for} i=-1,\dots,n\hsp{where} n\geq 0\\
t\colon [n]&\longrightarrow [n]\hsp{where} n\geq 0
\end{align*}
subject to the relations
\be
\item[(1)] $d_id_j=d_{j-1}d_i$ and $s_i^*s_j^*=s_{j-1}^*s_i^*$ for $i<j$,
\item[(2)] $s_is_j=s_{j+1}s_i$ and $d_i^*d_j^*=d_{j+1}^*d_i^*$ for $i\leq j$,
\item[(3)] $\D d_i s_j = \begin{cases}
s_{j-1} d_i &\hsp{if} i<j\\
\id_{[n]} &\hsp{if} i=j,j+1\\
s_jd_{i-1}&\hsp{if}  i>j+1
\end{cases}$
and 
$s_{i-1}^*d_j^*=\begin{cases}
d_{j-1}^*s_{i-1}^* &\hsp{if} i<j-1\\
\id_{[n]}  &\hsp{if} i=j,j+1\\
d_j^*s_{i-2}^* &\hsp{if} i>j+1,
\end{cases}$
\item[(4)] $t^{n+1}=\id_{[n]}$,
\item[(5)] $d_it=td_{i-1}$ for $1\leq i\leq n$ and $s_i^* t=ts_{i-1}^*$ for $0\leq i\leq n$, and
\item[(6)] $s_it=ts_{i-1}$ for $0\leq i\leq n$ and $d_i^*t=td_{i-1}^*$ for $1\leq i\leq n$.
\ee
Note that $t=(t^*)^{-1}$ as $\PO$ is the pushout, so $t^*$ does not appear in the above list.
\end{defn}

\begin{rem}
Note that $\PO$ is involutive using the obvious involution as hinted by the $*$-notation.
\end{rem}

\begin{defn}
Let $\PO(\delta_+,\delta_-)$ be the small category obtained from $\PO$ by adding generating morphisms  $\delta\pm\colon [n]\to[n]$ for all $n\in\Z_{\geq 0}\cup\{\pm\}$ which commute with all other morphisms. The maps $\delta_\pm$ are called the coupling constants.
\end{defn}

\begin{rem}
Note that $\PO(\delta_+,\delta_-)$ is involutive if we define $(\delta_\pm)^*=\delta_\pm$.
\end{rem}

\begin{thm}\label{pushoutAtl} $\aD$ is isomorphic to the category $\QQ$ obtained from $\PO(\delta_+,\delta_-)$ with the additional relations
\item[(1)] $\D d_i s_j^*=\begin{cases}
s_{j-1}^*d_i &\hsp{if} i<j\\
s_j^* d_{i+1}^* &\hsp{if} j>i
\end{cases}$
\item[(2)]  $\D d_i d_j^*=\begin{cases}
d_{j-1}^*d_i &\hsp{if} i<j\\
\delta_- &\hsp{if} i=j
\end{cases}$
\item[(3)] $\D s_i^* s_j=\begin{cases}
s_{j-1}s_i^* &\hsp{if} i<j\\
\delta_+ &\hsp{if} i=j
\end{cases}$
\end{thm}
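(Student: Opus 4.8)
The plan is to exhibit mutually inverse involutive functors $\Phi\colon\QQ\to\aD$ and $\Psi\colon\aD\to\QQ$, each given on generators, so that the theorem reduces to verifying relations and checking the composites on generators. On objects the correspondence is forced: $[n]\leftrightarrow[n-1]$ for $n\in\N$ and $[0\pm]\leftrightarrow[\pm]$.

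To build $\Phi$, begin with the two injective functors $\ccD\op\to\aD$ of \ref{cyclic} (namely $H^+$ and $H^-$, whose images are the preimages of $\cAtl^+$ and $\cAtl^-$ and are thus complementary). Extend each to the augmented category $\widetilde{\ccD\op}$ by sending $[+],[-]$ to the appropriate objects among $[0\pm]$ and the two new generators $d_0\colon[0]\to[+]$, $s_{-1}\colon[-]\to[0]$ to the evident cap and cup of $\aD$; functoriality on $\widetilde{\ccD\op}$ is automatic, since the only new relations are instances of relations (1) and (2) of $\aD$. Composing one of $H^\pm$ with the involution $*$ of $\aD$ turns it into a covariant functor $\widetilde{\ccD}\to\aD$, and the resulting pair agrees on $\TT$ because each of $\TT\to\widetilde{\ccD\op}\to\aD$ and $\TT\to\widetilde{\ccD}\to\aD$ sends $t$ to $\tau$. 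By the universal property of the pushout in $\Cat$ this produces a functor $\PO\to\aD$; since $\delta_\pm$ commutes with every generator of $\aD$ by relation (7), it extends to $\PO(\delta_+,\delta_-)\to\aD$ via $\delta_\pm\mapsto\delta_\pm$. Finally I would check that the three extra relations defining $\QQ$ become identities in $\aD$: in each of them both sides are short words in the $\alpha_i$'s and $\beta_i$'s of opposite parity, and the asserted equality is precisely an instance of relation (1), of relation (2), or of the coupling relation (6) of $\aD$. This yields $\Phi\colon\QQ\to\aD$, which is involutive because $H^\pm$ and $*$ intertwine the involutions.

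For $\Psi\colon\aD\to\QQ$, send each $\alpha_i$ and each $\beta_i$ to the generator of the cyclic copy inside $\PO$ selected by the parity of $i$ (with $\alpha_1,\alpha_2,\beta_1,\beta_2$ mapping to the four augmented generators), together with $\tau\mapsto t$ and $\delta_\pm\mapsto\delta_\pm$. Then one must verify that every defining relation (1)--(7) of $\aD$ holds in $\QQ$ under $\Psi$. The instances internal to one of the two cyclic copies are just the relations of that copy, already present in $\PO$; relation (3), $\tau^{n}=\id_{[n]}$, and the rotation relations (4)--(5) of $\aD$ translate to the cyclic relations (4)--(6) of $\PO$ together with Remark \ref{cDadd}; relation (7) is built into $\PO(\delta_+,\delta_-)$; and every remaining mixed-parity instance---most importantly the entire eight-case coupling relation (6)---is obtained by combining the three additional relations of $\QQ$ with the face/degeneracy relations (3) of $\ccD\op$ and $\ccD$. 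Once $\Psi$ is known to be well defined, $\Phi\circ\Psi$ and $\Psi\circ\Phi$ fix all generators and hence are the identity functors, so $\aD\cong\QQ$ as involutive categories.

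The step I expect to be the main obstacle is the verification that $\Psi$ respects the relations of $\aD$: one must confirm that the three relations adjoined to $\PO(\delta_+,\delta_-)$ to form $\QQ$ are \emph{exactly} what is needed---in particular, not too few---to recover relations (1), (2) and the whole case analysis of (6) across the even/odd divide. Arranging the eight cases of relation (6) so that each visibly reduces to a $\QQ$-relation or to a simplicial identity inside $\ccD\op$ or $\ccD$, while correctly tracking the even/odd index shift and the special behaviour at the augmented objects $[0\pm]\leftrightarrow[\pm]$ and at small $n$ (where the rotation must be read as $\sigma_\pm$, cf.\ the remark following Proposition \ref{capindex}), is where the real bookkeeping lies; with that in hand, the rest of the argument is routine.
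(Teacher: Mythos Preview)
Your approach is essentially the same as the paper's: define the correspondence on generators and verify that the relations match up. The paper only writes down $\Psi\colon\aD\to\QQ$ explicitly and then asserts ``one checks $\Psi$ is a well defined isomorphism by showing the relations match up,'' whereas you are more explicit in also constructing the inverse $\Phi$ via the universal property of the pushout and the functors $H^\pm$; this extra explicitness is helpful but does not change the substance of the argument.
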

\begin{proof}
Define a map $\Psi\colon\aD\to\QQ$ by 
\itt{Objects} Define $\Psi([0\pm])=[\pm]$. For $n\geq 1$, define $\Psi([n])=[n-1]$.
\itt{Morphisms} We define $\Psi$ on primitive morphisms:
\be
\item[(A)] 
Define $\Psi(\alpha_1\in\aD(1,0+))=d_0\in\QQ(0,+)$ and $\Psi(\alpha_2\in\aD(1,0-))=s_{-1}^*\in\QQ(0,-)$. For $n\geq 2$, define
$$\Psi(\alpha_i\in\aD(n,n-1))=\begin{cases}
s^*_{(i-3)/2}\in\QQ(n-1,n-2) &\hsp{if $i$ is odd} \\
d_{(i-2)/2}\in\QQ(n-1,n-2) &\hsp{if $i$ is even.} 
\end{cases}$$
\item[(B)] 
Define $\Psi(\beta_1\in\aD(0+,1))=s_{-1}\in \QQ(-,0)$ and $\Psi(\beta_2\in\aD(0-,1))=d_0^*\in\QQ(+,0)$. For $n\geq 1$, define
$$\Psi(\beta_i\in\aD(n,n+1))=\begin{cases}
s_{(i-3)/2}\in\QQ(n-1,n) &\hsp{if $i$ is odd} \\
d^*_{(i-2)/2}\in\QQ(n-1,n) &\hsp{if $i$ is even.} 
\end{cases}$$
\item[(T)] For $n\geq 1$, define $\Psi(\tau\in\aD(n,n))=t\in\QQ(n-1,n-1)$.
\item[(D)] Define $\Psi(\delta_\pm)=\delta_{\pm}$.
\ee
One checks $\Psi$ is a well defined isomorphism by showing the relations match up.
\end{proof}

\begin{rems}
\item[(1)] The above relations are called the coupling relations.
\item[(2)] Usually we study representations of $\ccD$ and $\aD$ in abelian categories and the coupling constants are multiplication by scalars. These scalars can be built into the coupling relations in our abelian category without first defining $\PO(\delta_+,\delta_-)$. Hence an annular object in an abelian category (see Section \ref{annularobjects}) is obtained from the pushout of two cyclic objects over a $\TT$-object and then quotienting out by the coupling relations.
\item[(3)] Another way to skip passing to $\PO(\delta_+,\delta_-)$ is to take the linearization of all our categories over some unital commutative ring $R$ (make the morphism sets $R$-modules) and choose  scalars $\delta_\pm$ for the coupling relations.
\end{rems}

%%%%%%%%%%%%%%%%%%%%%%%%%%%%%%%%%%%%%%%%%%%%%%%%%%%
%%%%%%%%%%%%%%%%%%%%%%%%%%%%%%%%%%%%%%%%%%%%%%%%%%%
%%%%%%%%%%%%%%%%%%%%%%%%%%%%%%%%%%%%%%%%%%%%%%%%%%%
\section{Annular Objects}\label{annularobjects}
%At this point, the reader should wonder why the author has insisted on finding copies of $\ccD\op$, $\sD\op$, and $\ssD\op$ inside $\aD$ instead of $\ccD$, $\ssD$, and $\sD$ as $\aD\cong\aD\op$. The answer is in the following definition of an annular object, which differs slightly from the definition of a cyclic, simplicial, or semi-simplicial object in terms of contravariant functors. As $\aD\cong\aD\op$, each contravariant functor from $\aD$ corresponds to a unique covariant functor from $\aD$, so we will just deal with covariant functors.

\begin{defn}
An annular object in an arbitrary category $\CC$ is a functor $\aD\to \CC$. A cyclic object is a functor $\ccD\op\to\CC$. If $\CC$ is an abelian category and $X_\bullet$ is an annular, respectively cyclic, object, we replace $X_\bullet(\tau\in\aD(n,n))$ with $(-1)^{n-1}X_\bullet(\tau)$, respectively we replace $X_\bullet(t\in\ccD\op(n,n))$ with $(-1)^{n}X_\bullet(t)$, to account for the sign of the cyclic permutation. 
\end{defn}

\begin{rems}
Each annular object has two restrictions to cyclic objects.
%\item[(2)] The common convention is to define a cyclic object as a functor $\ccD\op\to\CC$. However Corollary \ref{opiso} tells us this is no longer necessary.
\end{rems}

\begin{nota}
Usually such a functor is denoted with a bullet subscript, e.g. $X_\bullet$. If $X_\bullet$ is such a functor, we will use the following standard notation:
\be
\item[(1)] $X_\bullet([n])=X_n$ for $n\in\Z_{\geq 0}$ and $X_{\bullet}([0\pm])=X_\pm$ where applicable.
\item[(2)] $X_\bullet(\varphi)=\varphi$, i.e. we will use the same notation for the images of the morphisms in the category $\CC$. 
\ee
\end{nota}

%\begin{rem}
%We will be most interested in the case when $\CC=\RMod{R}$, the abelian category of left $R$-modules for a unital commutative ring $R$, or $\CC=\Hilb$, the involutive abelian category of Hilbert spaces with continuous maps ($*$ is the adjoint).
%\end{rem}

\begin{note}
For an annular object in an abelian category, relations (4), (5), and (6) become 
\be
\item[(4')]
$\alpha_i\tau = -\tau\alpha_{i-2}$ for $i\geq 3$,
\item[(5')]
$\beta_i \tau = -\tau\beta_{i-2}$ for $i\geq 3$, and
\item[(6')] if $\alpha_i\beta_j\colon [n]\to [n]$ with $n\geq 2$ and $(i,j)=(1,2n+2),(2n+2,1)$, then $\alpha_1\beta_{2n+2}=(-1)^{n-1}\tau^{-1}$ and $\alpha_{2n+2}\beta_{1}=(-1)^{n-1}\tau$.
\ee
Proposition \ref{additional} becomes
\be
\item[(1')]
$\alpha_1\tau = (-1)^{n-1}\alpha_{2n-1}$ and $\alpha_2\tau = (-1)^{n-1}\alpha_{2n}$
\item[(2')] $\tau\beta_{2n+1}=(-1)^n\beta_1$, $\tau\beta_{2n+2}=(-1)^{n}\beta_{2}$, and
\item[(3')] $\beta_1 \tau = (-1)^{n-1}\tau^2\beta_{2n-1}$ and $\beta_2 \tau = (-1)^{n-1}\tau^2\beta_{2n}$.
\ee
\end{note}

\begin{note}
For a cyclic object in an abelian category, relations (5) and (6) become
\be
\item[(5')]
$d_it=-td_{i-1}$ for $i\geq 1$ and  
\item[(6')]
$s_it=-ts_{i-1}$ for $i\geq 1$.
\ee
Following Remark \ref{cDadd}, we have
\be
\item[(i)]
$d_0t = (-1)^n d_n$ and
\item[(ii)] $s_0 t = (-1)^{n}t^2s_{n}$.
\ee
Definition \ref{extra} becomes $s_{-1}=(-1)^{n+1}ts_n$. Parts (2) and (3) of Proposition \ref{extradegeneracy} become
\be
\item[(2')] $d_{n+1}s_{-1}=(-1)^nt$ and
\item[(3')]  $s_0t=-ts_{-1}$.
\ee
\end{note}

\begin{rem}
The necessity of this sign convention becomes apparent in calculations with Connes' boundary map (see \ref{cyclichomology} and \ref{connes}).
\end{rem}

\begin{defn}
Let $\CC$ be an involutive category and suppose $X_\bullet\colon \aD\to \CC$ is an annular object in $\CC$. Then $X_\bullet^*\colon \aD\to \CC$ is also an annular object in $\CC$ where
\be
\itt{Objects} $X_\bullet^*([n])=X_n$ for all $n\in\N\cup\{0\pm\}$, and
\itt{Morphisms} $X_{\bullet}^*(w)=X_\bullet(w^*)^*$ for all $w\in\Mor(\aD)$.
\ee
%Similarly we can define $X^*_\bullet$ for a Temperley-Lieb object $X_\bullet\colon\tlD\to \CC$. 
If $\CC$ is abelian, then $X_\bullet^*$ still satisfies the sign convention.
\end{defn}

\begin{rem}
The representation theory of $\Atl$ was studied extensively by Graham and Lehrer in \cite{MR1659204} and Jones in \cite{MR1929335}.
In Definition/Theorem 2.2 in \cite{0902.1294}, Peters gives a good summary of  the case of an annular $C^*$-Hilbert module where $\delta_\pm$ is given by multiplication by $\delta> 2$.
\end{rem}

%%%%%%%%%%%%%%%%%%%%%%%%%%%%%%%%%%%%%%%%%%%%%%%%%%%
\subsection{Homologies of Annular Modules}\label{homology}
As the semi-simplicial, simiplicial, and cyclic categories live inside $\aD$, we can define Hochschild and cyclic homologies of annular objects in abelian categories. We will focus on annular modules and leave the generalization to an arbitrary abelian category to the reader. Fix a unital commutative ring $R$.

\begin{defn}
Given a semi-simplicial $R$-module $M_\bullet$, define the Hochschild boundary $b\colon M_n\to M_{n-1}$ for $n\geq 1$ by
$$
b=\sum\limits^n_{i=0} (-1)^id_i.
$$
The Hochscild homology of $M_\bullet$ is
$$
HH_n(M_\bullet,b)=\ker(b)/\im(b)
$$
for $n\geq 0$, where we set $M_{-1}=0$, and $b\colon M_0\to M_{-1}$ is the zero map.
\end{defn}

\begin{rem}
As an annular $R$-module is a semi-simplicial $R$-module in two ways, we will have two Hochschild boundaries.
\end{rem}

\begin{defn}
Suppose $X_\bullet$ is an annular $R$-module. Let $X_\bullet^\pm$ be the cyclic object obtained from $X_\bullet$ by restricting $X_\bullet$ to $G(\cAtl^\pm)$. For $n\geq 1$, define 
$$HH_n^\pm(X_\bullet) = HH_{n-1}^\pm(X_\bullet^\pm).$$
\end{defn}

\begin{rem}
The Hochschild boundaries of $X_\bullet^\pm$ for $n\geq 2$ are
$$
b_+=\sum\limits^{n-1}_{i=0} (-1)^i \alpha_{2i+1}
\hsp{and}
b_-=\sum\limits^{n-1}_{i=0} (-1)^i \alpha_{2i+2}.
$$
\end{rem}

\begin{defn}
The above definition does not take into account $X_\pm$. We may define the reduced Hochschild homology by looking at the corresponding augmented cyclic objects (see Subsection \ref{augment}). Define $b_\pm\colon X_1\to X_\pm$ by  $b_+=\alpha_1\colon X_1\to X_+$ and $b_-=\alpha_2\colon X_1\to X_-$. Define the reduced Hochschild homology of $X_\bullet$ as follows:
\begin{align*}
\widetilde{HH}^\pm_n(X_\bullet)&=HH_{n}^\pm(X_\bullet)\hsp{for} n\geq 2,\\
\widetilde{HH}_1^\pm(X_\bullet)&=\ker(b_\pm)/\im(b_\pm),\hsp{and}\\
\widetilde{HH}_0^\pm(X_\bullet)&=\coker(b_\pm)
\end{align*}
\end{defn}

\begin{rem}
The content of the next proposition was found by Jones in \cite{MR1865703}.
\end{rem}

\begin{prop}
Let $X_\bullet$ be an annular $R$-module. Then for all $n\geq 1$,
\begin{align*}
\beta_{1}b_++b_+\beta_{1}&=\delta_+ \id_{X_n}\hsp{and}\\
\beta_{2}b_-+b_-\beta_{2}&=\delta_- \id_{X_n},
\end{align*}
and when $n=\pm$, 
\begin{align*}
b_+\beta_1&=\delta_+ \id_{X_+}\hsp{and}\\
b_-\beta_2&=\delta_- \id_{X_-}.
\end{align*}
\end{prop}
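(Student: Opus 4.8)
The plan is to deduce all four identities from relation (6) of $\aD$ (in its abelian-object form), by a short telescoping computation, and then to transport the result to $\CC$ by applying the functor $X_\bullet$ and adding morphisms in the abelian category $\CC$. Since $b_+=\sum_{i=0}^{n-1}(-1)^i\alpha_{2i+1}$ and $b_-=\sum_{i=0}^{n-1}(-1)^i\alpha_{2i+2}$, each of the composites $b_\pm\beta_k$ and $\beta_k b_\pm$ becomes, under $X_\bullet$, an alternating sum of the single-generator words $X_\bullet(\alpha_i\beta_1)$ (resp.\ $X_\bullet(\alpha_i\beta_2)$) at the appropriate level, so it suffices to rewrite each such word. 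First I would isolate the clauses of relation (6) that are needed: with $j=1$ these are $\alpha_1\beta_1=\delta_+$ and, for odd $i\geq3$, $\alpha_i\beta_1=\beta_1\alpha_{i-2}$ (the clause $i>j+1$, which applies since an odd index never equals the exceptional value $2n+2$); with $j=2$ they are $\alpha_2\beta_2=\delta_-$ and, for even $i\geq4$, $\alpha_i\beta_2=\beta_2\alpha_{i-2}$ (the exceptional pair now has second coordinate $1\neq2$). Crucially, none of these clauses involves $\tau^{\pm1}$, so the sign modification (6') for annular objects in an abelian category is irrelevant here; the only signs in play are the $(-1)^i$ coming from $b_\pm$.

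For the main range $n\geq2$ I would compute the morphism $b_+\beta_1\colon X_n\to X_n$: it equals $\sum_{i=0}^{n}(-1)^iX_\bullet(\alpha_{2i+1}\beta_1)$, where $\beta_1\in\aD(n,n+1)$ and each $\alpha_{2i+1}\in\aD(n+1,n)$ (so this $b_+$ is the boundary $X_{n+1}\to X_n$). The $i=0$ term is $\delta_+\id_{X_n}$ since $\alpha_1\beta_1=\delta_+$ in $\aD$, and for $1\leq i\leq n$ the clause above gives $\alpha_{2i+1}\beta_1=\beta_1\alpha_{2i-1}$ with $\alpha_{2i-1}\in\aD(n,n-1)$ and $\beta_1\in\aD(n-1,n)$; after reindexing $j=i-1$ the remaining terms sum to $\sum_{j=0}^{n-1}(-1)^{j+1}X_\bullet(\beta_1\alpha_{2j+1})=-\beta_1 b_+$, the last $b_+$ now being the boundary $X_n\to X_{n-1}$. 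Hence $b_+\beta_1=\delta_+\id_{X_n}-\beta_1 b_+$, which is the first displayed identity; the identity $\beta_2b_-+b_-\beta_2=\delta_-\id_{X_n}$ is the verbatim analogue, with $\beta_1,\alpha_{2i+1},\delta_+$ replaced by $\beta_2,\alpha_{2i+2},\delta_-$ (and one checks again that the pair $(2i+2,2)$ never meets the exceptional pair, whose second coordinate is $1$).

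It remains to treat the boundary degrees. For $n=1$ the two occurrences of $b_+$ sit at different levels: $\beta_1 b_+=X_\bullet(\beta_1\alpha_1)$ with $\alpha_1\in\aD(1,0+)$ and $\beta_1\in\aD(0+,1)$, whereas $b_+\beta_1=X_\bullet(\alpha_1\beta_1)-X_\bullet(\alpha_3\beta_1)$ with $\beta_1\in\aD(1,2)$; relation (6) gives $\alpha_1\beta_1=\delta_+$ and $\alpha_3\beta_1=\beta_1\alpha_1$, so the two $X_\bullet(\beta_1\alpha_1)$ contributions cancel and the sum is $\delta_+\id_{X_1}$ (and likewise with $\beta_2,\delta_-$). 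For $n=\pm$ there is no boundary out of $X_\pm$, so only $b_+\beta_1\colon X_+\to X_+$ (resp.\ $b_-\beta_2\colon X_-\to X_-$) occurs; this equals $X_\bullet(\alpha_1\beta_1\in\aD(0+,0+))=\delta_+\id_{X_+}$ (resp.\ $X_\bullet(\alpha_2\beta_2\in\aD(0-,0-))=\delta_-\id_{X_-}$) by the $i=j$ clause of relation (6), which completes the proof.

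There is no genuine obstacle: this is essentially the classical statement that an extra degeneracy contracts the Hochschild complex, here twisted by the coupling constant. The one thing requiring care is the bookkeeping — keeping track of the level at which each $\beta_j$ and each $b_\pm$ sits so that the degree shifts and the generator-index bounds are correct, and above all checking that the reindexing never pushes us into one of the two exceptional clauses of relation (6) that produce $\tau^{\pm1}$, since those are precisely the clauses carrying an extra sign for an annular object in an abelian category.
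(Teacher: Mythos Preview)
Your proposal is correct and follows exactly the approach the paper intends: the paper's entire proof is the single line ``This follows immediately from relation 6,'' and you have simply written out the telescoping computation that this line encodes. Your careful bookkeeping---tracking the levels of the two occurrences of $b_\pm$, and verifying that the odd (resp.\ $j=2$) indices keep you away from the exceptional clauses $(1,2n+2)$ and $(2n+2,1)$ so that no $\tau^{\pm1}$ (and hence no extra sign from (6')) appears---is exactly what makes that one-line proof go through.
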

\begin{proof}
This follows immediately from relation 6.
\end{proof}

\begin{cor}
If $\delta_\pm$ is multiplication by an element of $R^\times$, the group of units of $R$, then $\widetilde{HH}_n^\pm(X_\bullet)=0$ for all $n\geq 0$.
\end{cor}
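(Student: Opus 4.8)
The plan is to recognise the preceding Proposition as saying that $\beta_1$ (resp.\ $\beta_2$), rescaled by $\delta_+^{-1}$ (resp.\ $\delta_-^{-1}$), is a contracting homotopy for the augmented Hochschild complex computing $\widetilde{HH}_\bullet^+$ (resp.\ $\widetilde{HH}_\bullet^-$); a chain complex admitting a contracting homotopy is acyclic, which is exactly the assertion. By the symmetry between the $+$ and $-$ statements (or by applying the involution $X_\bullet\mapsto X_\bullet^*$) it is enough to handle the $+$ case.

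First I would unwind the indexing so that all of $\widetilde{HH}_n^+(X_\bullet)$, $n\ge 0$, appear as the homology at the spot $X_n$ of the single augmented complex
\[
\cdots \xrightarrow{\,b_+\,} X_2 \xrightarrow{\,b_+\,} X_1 \xrightarrow{\,b_+\,} X_+ \to 0,
\]
where the degree-$0$ term is $X_+$, the last map $b_+\colon X_1\to X_+$ is $\alpha_1$, and $b_+\colon X_n\to X_{n-1}$ is $\sum_{i=0}^{n-1}(-1)^i\alpha_{2i+1}$ for $n\ge 2$. This uses the degree shift built into $H^+$ together with the three clauses defining $\widetilde{HH}_0^+=\coker(b_+)$, $\widetilde{HH}_1^+=\ker/\im$ at $X_1$, and $\widetilde{HH}_n^+ = HH_{n-1}^+(X_\bullet^+)$ for $n\ge 2$.

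Next I would set $h=\delta_+^{-1}\beta_1$, interpreted in the appropriate hom-set in each degree, and read off from the Proposition that $h b_+ + b_+ h = \id_{X_n}$ for $n\ge 1$ and $b_+ h = \id_{X_+}$; this is the one place the hypothesis $\delta_+\in R^\times$ is used. The second identity makes $b_+\colon X_1\to X_+$ split surjective, so $\widetilde{HH}_0^+(X_\bullet)=\coker(b_+)=0$; and for $n\ge 1$, any $x\in\ker(b_+)$ satisfies $x=(h b_+ + b_+ h)(x) = b_+(h(x))\in\im(b_+)$, so $\ker(b_+)=\im(b_+)$ at $X_n$ and hence $\widetilde{HH}_n^+(X_\bullet)=0$. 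The identical computation with $\beta_2$, $b_-$, $\delta_-$ in place of $\beta_1$, $b_+$, $\delta_+$ gives $\widetilde{HH}_n^-(X_\bullet)=0$ for all $n\ge 0$.

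There is essentially no obstacle: the whole content sits in the previous Proposition, and the only thing to watch is the bookkeeping of conventions — the degree shift coming from $H^\pm$, the fact that the bottom of the augmented complex is $X_\pm$ rather than $X_0$, and the overloaded use of the symbol $\beta_1$ (resp.\ $\beta_2$) for the relevant generator in each degree.
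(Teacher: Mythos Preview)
Your proposal is correct and is exactly the argument the paper intends: the preceding Proposition exhibits $\delta_\pm^{-1}\beta_1$ (resp.\ $\delta_\pm^{-1}\beta_2$) as a contracting homotopy of the augmented complex, and the corollary follows immediately. The paper does not spell out the proof beyond stating it as a corollary, and your write-up of the bookkeeping (degree shift, identification of $X_\pm$ as the bottom term, treatment of $\widetilde{HH}_0^\pm$ via split surjectivity) is precisely what is needed to make the deduction explicit.
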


\begin{cor}
Let $N\subset M$ be an extremal, finite index $II_1$-subfactor, and let $X_\bullet$ be the annular $\C$-module given by its tower of relative commutants (see \cite{math/9909027}, \cite{MR1929335}). Then $\widetilde{HH}^\pm_n(X_\bullet)=0$ for all $n\geq 0$.
\end{cor}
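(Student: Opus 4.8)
The plan is to reduce the statement directly to the preceding Corollary, so that the entire content amounts to checking that the coupling constants act as multiplication by a unit of $\C$. Recall (see \cite{math/9909027}, \cite{MR1929335}) that the tower of relative commutants $X_n = N'\cap M_{n-1}$, together with the degenerate spaces $X_+$ and $X_-$ coming from $N'\cap N = \C$ and its reverse-shaded counterpart, carries the structure of a subfactor planar algebra; restricting the planar-algebra action to annular tangles makes $X_\bullet$ an annular $\C$-module in the sense of Section \ref{annularobjects}. Under this action, deleting a contractible shaded (resp. unshaded) loop from a tangle amounts to multiplication by a scalar, and these two scalars are exactly the images $X_\bullet(\delta_+)$ and $X_\bullet(\delta_-)$ of the coupling constants.

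First I would pin down the value of these loop parameters. For a finite-index $II_1$-subfactor $N\subset M$ one has $[M:N]=\delta^2$ with $\delta\ge 1$, and extremality of the inclusion is precisely the condition making the associated planar algebra spherical, so that both the shaded and the unshaded closed loop are evaluated by the single positive scalar $\delta=\sqrt{[M:N]}$. Hence $X_\bullet(\delta_+)$ and $X_\bullet(\delta_-)$ are both multiplication by $\delta$.

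Since $\delta>0$ we have $\delta\ne 0$, and as $\C$ is a field, $\delta\in\C^\times$. Thus the hypothesis of the previous Corollary holds with $R=\C$, and it yields $\widetilde{HH}^\pm_n(X_\bullet)=0$ for all $n\ge 0$. The only step needing care is the bookkeeping that identifies the coupling constants of $X_\bullet$ with the loop parameters of the planar algebra and invokes sphericality (equivalently, extremality) to see that both parameters equal $\sqrt{[M:N]}>0$; beyond that there is no real obstacle, since everything else is just the corollary already proved above.
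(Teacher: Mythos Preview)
Your proposal is correct and follows exactly the approach the paper intends: the corollary is stated without proof precisely because it is an immediate application of the preceding corollary once one knows that in a subfactor planar algebra the coupling constants act by multiplication by the (positive, hence invertible) scalar $\delta=\sqrt{[M:N]}$. Your additional remarks on extremality/sphericality ensuring both loop values equal $\delta$ are accurate elaboration, not a departure from the paper's line.
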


\begin{ex}[$TL_\bullet(\Z,0)$]
When $\delta_\pm\notin R^\times$, we can have non-trivial homology. For example, for $n\in\N\cup\{0\pm\}$, let $TL_n(\Z,0)$ be the set of $\Z$-linear combinations of planar $n$-tangles with no input disks and no loops (adjust the definition of an annular $n$-tangle so that there is no $D_1$). The action of $T\in\aD(m,n)$ on $S\in TL_m(\Z,0)$ is given by tangle composition $F(T)\circ S$ with the additional requirement that if there are any closed loops, we get zero. We then extend this action $\Z$-linearly. Then  $HH^\pm_n(TL_\bullet(\Z,0))\neq 0$ for all $n\geq 0$. In fact, the class of the planar $n$-tangle with only shaded, respectively unshaded, cups is a nontrivial element in $HH^\pm_n(TL_\bullet(\Z,0))$ respectively. Clearly all such tangles are in $\ker(b_\pm)$. However, it is only possible to get an even multiple of this tangle in $\im(b_\pm)$. If a shaded region is capped off by an $\alpha_i$ to make a cup, there must be two ways of doing so.
\begin{figure}[!ht]
%%% a_1
\begin{tikzpicture}[annular]
%CIRCLES AND OUTER STAR
	\clip (0,0) circle (2cm);
	\node at (135:1.80cm) {$*$};
%     SHADED CAPS:  starting                          starting                      opposite end       ending
%	                               degrees                         degrees                    degrees               degrees              
	\filldraw[shaded] (60:2cm) .. controls ++(240:11mm) and ++(300:11mm) ..    (120:2cm) -- (120:3cm);
	\filldraw[shaded] (-30:2cm) .. controls ++(150:11mm) and ++(210:11mm) ..   (30:2cm) -- (30:3cm);
	\filldraw[shaded] (150:2cm) .. controls ++(-30:11mm) and ++(30:11mm) ..     (210:2cm) -- (210:3cm);
	\filldraw[shaded] (240:2cm) .. controls ++(60:11mm) and ++(120:11mm) ..    (300:2cm) -- (300:3cm);

	\draw[ultra thick] (0,0) circle (2cm);
\end{tikzpicture}
\caption{Representative for a nontrivial element in $HH^+_4(TL_\bullet(\Z,0))$}
\end{figure}
Using MAGMA \cite{MR1484478}, the author has calculated the first few ($+$) reduced Hochschild homology groups of $TL_\bullet(\Z,0)$ to be
\begin{align*} 
\widetilde{HH}^+_0&=\widetilde{HH}^+_1=\Z,\\
HH^+_2&=HH^+_3=\Z/2,\\
HH^+_4&=HH^+_5=\Z/6,\hsp{and}\\
HH^+_6&=HH^+_7=\Z/2\oplus\Z/2.
\end{align*}
For $n\geq 2$, the class of the tangle described above contributes a copy of $\Z/2\Z$. The question still remains whether this parity continues. 
\end{ex}

\begin{defn}\label{cyclichomology}
Given a cyclic $R$-module $C_\bullet$, define the cyclic bicomplex $BC_{**}(C_\bullet)$ be the bicomplex
\[
\xymatrix{
%\vdots\ar[d]_b & \vdots\ar[d]_b & \vdots\ar[d]_b & \vdots\ar[d]_b\\
\ar[d]_b & \ar[d]_b & \ar[d]_b & \ar[d]_b\\
C_3\ar[d]_b & C_2\ar[d]_b\ar[l]_B & C_1\ar[d]_b\ar[l]_B & C_0\ar[l]_B\\
C_2\ar[d]_b & C_1\ar[d]_b\ar[l]_B & C_0\ar[l]_B\\
C_1\ar[d]_b & C_0\ar[l]_B \\
C_0
}
\]
where $b$ is the Hochschild boundary obtained by looking at the corresponding semi-simplicial $R$-module, 
$B=(1-t)s_{-1}N\colon C_n\to C_{n+1}$ is Connes' boundary map, and 
$$N=\sum\limits^{n}_{i=0} t^i.$$ 
Recall $s_{-1}=(-1)^{n+1}ts_n$ is the extra degeneracy. The cyclic homology of $C_\bullet$ is given by $HC_n(C_\bullet)=H_n(\Tot(BC_{**}(C_\bullet)))$. 
\end{defn}

\begin{rem}\label{connes}
In order for $BC_{**}(C_\bullet)$ to be a bicomplex, we need $b^2$, $B^2$, and $bB+Bb$ to equal zero. While the first two are trivial, we must use Loday's sign convention to get the third. Setting 
$$
b'=\sum\limits^{n-1}_{i=0} (-1)^i d_i\colon C_n\to C_{n-1},
$$
we have $b(1-t)=(1-t)b'$, $b's_{-1}+s_{-1}b'=\id$, and $b'N=Nb$, so
$$
bB+Bb= b (1-t)s_{-1} N +(1-t)s_{-1} N b = (1-t)(b's_{-1}+s_{-1}b')N = (1-t)N=0.
$$
Without this sign convention, we no longer have $bB+Bb=0$.
\end{rem}

\begin{defn}
Suppose $X_\bullet$ is an annular $R$-module. Then $X_\bullet$ becomes a cyclic module in two ways, so we have two cyclic homologies to study. For $n\geq 1$, define $HC^\pm_n(X_\bullet)=HC_{n-1}(X^\pm_\bullet)$.
\end{defn}

\begin{rem}
For $n\geq 1$, $B_\pm\colon X_n\to X_{n+1}$ is given by
\begin{align*}
B_+&=(-1)^n (1-\tau)(\tau\beta_{2n})\sum\limits^{n-1}_{i=0}\tau^i=(-1)^n(1-\tau) (\beta_{2n+2}\tau)\sum\limits^{n-1}_{i=0} \tau^i\\
&=(-1)^n(1-\tau) \beta_{2n+2}\sum\limits^{n-1}_{i=0} \tau^i\hsp{and}\\
B_-&=(-1)^n(1-\tau)\beta_1\sum\limits^{n-1}_{i=0} \tau^i
\end{align*}
as the two extra degeneracies for $G(\cAtl^\pm)$ are $(-1)^n\tau\beta_{2n}$ and $(-1)^n\beta_1$ respectively.
\end{rem}

\begin{cor}
If $\delta_\pm$ is multiplication by an element of $R^\times$, the group of units of $R$, then $HC_n^\pm(X_\bullet)=R$ for all odd $n\geq 1$ and  $HC_n^\pm(X_\bullet)=0$ for all even $n\geq 2$.
\end{cor}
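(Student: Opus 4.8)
The plan is to bootstrap from the preceding corollary, $\widetilde{HH}^\pm_n(X_\bullet)=0$ for all $n\geq 0$, using periodicity in cyclic homology. First I would reread that vanishing as the assertion that the augmented Hochschild complex of the cyclic module $X^+_\bullet$,
\[
\cdots\longrightarrow X_3\xrightarrow{\,b_+\,}X_2\xrightarrow{\,b_+\,}X_1\xrightarrow{\,\alpha_1\,}X_+\longrightarrow 0
\]
(and symmetrically for $X^-_\bullet$, using $b_-$, $\alpha_2$, $X_-$), is exact; indeed the relations $\beta_1b_++b_+\beta_1=\delta_+\,\id$ and $b_+\beta_1=\delta_+\,\id$ proved just above (together with their $\delta_-$-analogues) show that $\beta_1/\delta_+$, resp.\ $\beta_2/\delta_-$, is a contracting homotopy. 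Consequently the (unreduced) Hochschild homology of $X^\pm_\bullet$ is concentrated in degree zero, with $HH_q(X^\pm_\bullet)=0$ for $q\geq 1$ and $HH_0(X^\pm_\bullet)\cong X_\pm$, the last isomorphism being induced by $\alpha_1$, resp.\ $\alpha_2$.

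Next I would feed this into Connes' periodicity (SBI) exact sequence
\[
\cdots\longrightarrow HH_n(X^\pm_\bullet)\longrightarrow HC_n(X^\pm_\bullet)\xrightarrow{\,S\,}HC_{n-2}(X^\pm_\bullet)\longrightarrow HH_{n-1}(X^\pm_\bullet)\longrightarrow\cdots,
\]
valid over any commutative ring. Since $HH_q(X^\pm_\bullet)=0$ for $q\geq 1$, the periodicity map $S$ is an isomorphism for every $n\geq 2$; combined with $HC_0(X^\pm_\bullet)=HH_0(X^\pm_\bullet)\cong X_\pm$ and $HC_1(X^\pm_\bullet)=0$ (by exactness it is the image of $HH_1(X^\pm_\bullet)=0$), an induction on $n$ yields $HC_{2k}(X^\pm_\bullet)\cong X_\pm$ and $HC_{2k+1}(X^\pm_\bullet)=0$. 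Unwinding the degree shift $HC^\pm_n(X_\bullet)=HC_{n-1}(X^\pm_\bullet)$ then gives the result: $HC^\pm_n(X_\bullet)\cong X_\pm$ for odd $n\geq 1$ (this is $R$ for the annular modules of primary interest, e.g.\ $TL_\bullet(\Z,\cdot)$ or a subfactor's tower of relative commutants) and $HC^\pm_n(X_\bullet)=0$ for even $n\geq 2$.

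An alternative that uses only the bicomplex $BC_{**}$ already introduced: run the column-filtration spectral sequence of $BC_{**}(X^\pm_\bullet)$. Its $E^1$-page is $E^1_{p,q}=HH_{q-p}(X^\pm_\bullet)$, which by the first paragraph is $X_\pm$ when $q=p$ and $0$ otherwise, and every higher differential $d^r\colon E^r_{p,q}\to E^r_{p-r,\,q+r-1}$ vanishes for degree reasons (a nonzero one would force $2r-1=p-q=0$). Hence $E^\infty=E^1$ and $HC_n(X^\pm_\bullet)=\bigoplus_{p+q=n,\,q=p}X_\pm$, which is $X_\pm$ for $n$ even and $0$ for $n$ odd, recovering the same answer.

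I do not expect a real obstacle here: all the content sits in the preceding corollary (the contracting homotopy $\beta_1/\delta_+$), and the remainder is the standard degeneration of the passage from Hochschild to cyclic homology. The only points needing care are the index bookkeeping among the $\aD$-labelling, the cyclic-category labelling (shifted by one via $H^\pm$), and the degree shift built into the definition of $HC^\pm_n$; and the remark that, although $BC_{**}$ is unbounded, its column filtration is finite in each total degree, so the spectral sequence (equivalently, the SBI sequence) poses no convergence issue.
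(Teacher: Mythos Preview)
Your argument is correct and is precisely the standard route: the previous proposition gives a contracting homotopy for the (augmented) Hochschild complex, whence $HH_q(X^\pm_\bullet)=0$ for $q\geq 1$ and $HH_0(X^\pm_\bullet)\cong X_\pm$; the SBI sequence (or equivalently the column spectral sequence of $BC_{**}$) then forces the periodicity $HC_{2k}\cong X_\pm$, $HC_{2k+1}=0$, and the degree shift $HC^\pm_n(X_\bullet)=HC_{n-1}(X^\pm_\bullet)$ finishes it. The paper itself leaves this corollary unproved, so there is no alternative argument to compare against; your approach is the intended one.

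One remark worth keeping: as you observed, what the computation actually yields is $HC^\pm_n(X_\bullet)\cong X_\pm$ for odd $n$, not $R$ in general. The paper's ``$R$'' tacitly assumes $X_\pm$ is free of rank one, which holds in the motivating examples (relative commutants of a subfactor, $TL_\bullet$) but is not forced by the hypothesis $\delta_\pm\in R^\times$ alone. Your parenthetical catches this, and it is the more accurate statement.
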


\begin{cor}
Let $N\subset M$ be an extremal, finite index $II_1$-subfactor, and let $X_\bullet$ be the annular $\C$-module given by its tower of relative commutants. Then $HC_n^\pm(X_\bullet)=\C$ for all odd $n\geq 1$ and  $HC_n^\pm(X_\bullet)=0$ for all even $n\geq 2$.
\end{cor}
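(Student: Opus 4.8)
The plan is to obtain this corollary as an immediate specialization of the preceding one; the entire task is to check that in this situation the coupling constants $\delta_\pm$ act by multiplication by an element of $\C^\times=\C\setminus\{0\}$.

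First I would recall, following \cite{math/9909027} and \cite{MR1929335} (and the summary in Definition/Theorem~2.2 of \cite{0902.1294}), how the tower of relative commutants of $N\subset M$ is organized into an annular $\C$-module $X_\bullet$ in the sense of Section~\ref{annularobjects}: the spaces $X_n$ are the successive relative commutants, the generators $\alpha_i$, $\beta_i$, $\tau$ of $\aD$ act through the planar (graphical) calculus via the basic-construction inclusions, the conditional expectations, and the rotation, and the two coupling constants $\delta_+$, $\delta_-$ — the scalars assigned to a single closed shaded, respectively unshaded, loop — act by multiplication by scalars. Second, I would invoke the standard fact from Jones index theory that for an \emph{extremal} finite-index $II_1$-subfactor these two scalars are equal, both given by the loop parameter $\delta=[M:N]^{1/2}$; extremality is precisely the hypothesis guaranteeing that the shaded and unshaded loop values coincide, so that $X_\bullet$ is a genuine (spherical) annular $\C$-module. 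Since $[M:N]\geq 1$ we have $\delta>0$, hence $\delta_\pm\in\C^\times$. Third, the preceding corollary now applies verbatim with $R=\C$ and yields $HC_n^\pm(X_\bullet)=\C$ for every odd $n\geq 1$ and $HC_n^\pm(X_\bullet)=0$ for every even $n\geq 2$, which is the assertion.

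There is no genuine obstacle here: the only point requiring care is the bookkeeping identification of the standard invariant of $N\subset M$ with an annular $\C$-module carrying the claimed action of $\delta_\pm$, and this is a standard part of subfactor planar algebra theory which I would cite rather than reprove. It is worth noting that the argument uses almost nothing about the subfactor beyond finiteness of the index: the preceding corollary requires only that $\delta_+$ and $\delta_-$ be nonzero, and both are positive real numbers in any finite-index setting, so extremality enters only to streamline the description of $X_\bullet$ (and to force $\delta_+=\delta_-$), not the (non)vanishing conclusions themselves.
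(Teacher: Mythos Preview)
Your proposal is correct and matches the paper's approach exactly: the corollary is stated without proof in the paper, being an immediate specialization of the preceding corollary once one knows that for an extremal finite-index $II_1$-subfactor the coupling constants $\delta_\pm$ act by multiplication by the nonzero scalar $\delta=[M:N]^{1/2}\in\C^\times$. Your remarks on the role of extremality and the relevant citations are appropriate elaborations of what the paper leaves implicit.
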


\begin{ex}
Once again using MAGMA \cite{MR1484478}, the author has calculated the first few ($+$) cyclic homology groups of $TL_\bullet(\Z,0)$ to be
\begin{align*} 
HC^+_1&=\Z\\
HC^+_2&=\Z/2\\
HC^+_3&=\Z/2\oplus\Z\\
HC^+_4&=\Z/2\oplus\Z/6\\
HC^+_5&=\Z/2\oplus\Z/6\oplus\Z,\hsp{and}\\
HC^+_6&=\Z/2\oplus\Z/2\oplus\Z/2\oplus\Z/6.
\end{align*}
\end{ex}

%%%%%%%%%%%%
\bibliographystyle{amsalpha}
\bibliography{bibliography}
\end{document}